\newtheoremstyle{break}
  {10pt}{10pt}
  {\normalfont}
  {}
  {\bfseries}
  {.}
  {\newline}
  {}
\theoremstyle{plain}
\newtheorem{theorem}{Theorem}[section]
\newtheorem{proposition}[theorem]{Proposition}
\newtheorem{lemma}[theorem]{Lemma}
\newtheorem{definition}[theorem]{Definition}
\newtheorem{assumption}[theorem]{Assumption}
\theoremstyle{break}
\newtheorem*{remark}{Remark}
\numberwithin{equation}{section}
\title{Estimation of the elasticity for CKLS model from high-frequency observations
}
\author{
  Boyuan Ning* \\
  Graduate School of Fundamental Science and Engineering \\
  Waseda University \\
  3-4-1 Ohkubo, Shinjuku-ku, Tokyo, Japan\\
  \texttt{ningboyuan@akane.waseda.jp} \\
  \And
  Yasutaka Shimizu \\
  Department of Applied Mathematics \\
  Waseda University \\
  3-4-1 Ohkubo, Shinjuku-ku, Tokyo, Japan\\
  \texttt{shimizu@waseda.jp} \\
}
\begin{document}
\maketitle

\begin{abstract}
    We investigate parametric estimation of the elasticity parameter in the CKLS diffusion based on high-frequency data. First, we transform  the CKLS diffusion to a CIR-type one via a smooth state-space mapping and the general Girsanov change of measure. This transformation enables the applications of existing inference tools for CIR processes while ensuring possibilities of transferring the resulting limit theorems back to the original probability space. However, because Feller's condition fails, many existing high-frequency likelihood-based procedures cannot be applied directly, since their discretization schemes approximate likelihood terms involving the reciprocal of the process by Riemann sums that are no longer well-defined once the paths are allowed to hit zero. Instead, we estimate the drift coefficient of the transformed CIR-type model via a procedure based on its positive Harris recurrence, which is valid in the high-frequency regime. Exploiting the drift-elasticity relationship implied by the CKLS--CIR transformation, with the help of an initial estimation, we obtain an estimator of the CKLS elasticity from the CIR drift estimator in the transformed model. This yields a closed-form estimator of the elasticity parameter with an explicit asymptotic variance. We establish its $p$-consistency, stable convergence in law, and asymptotic normality. Finally, we show that stable convergence in law is invariant under equivalent changes of measure, thereby guaranteeing that the Gaussian limit remains invariant under the original measure.
\end{abstract}

\keywords{CKLS process \and Girsanov transformation \and CIR process \and high-frequency data \and stable convergence in law}

\section{Introduction}
The Chan--Karolyi--Longstaff--Sanders (CKLS) model is introduced by \cite{chan1992empirical} as a flexible generalization of classical short-rate diffusions and, in particular, as a remedy for some of the restrictive features of the Cox--Ingersoll--Ross (CIR) specification, which has long served as a benchmark for describing short-rates dynamics (e.g., Heston model). It allows volatility of short-rates to respond more flexibly to the level of the rate through an elasticity parameter. 
\begin{definition}
Let $(\Omega, \mathcal{F}, \{\mathcal{F}_{t}\}_{t \geq 0}, \mathbb{P})$ be a filtered probability space supporting a standard one-dimensional Wiener process $W_{t}$. The CKLS process, denoted by $\lambda_{t}$, is defined as the solution to the SDE (the CKLS model):
\begin{equation}\label{CKLS-process}
    d\lambda_{t}=(a-b\lambda_{t})dt+\sigma\lambda_{t}^{k}dW_{t}, \quad \lambda_{0}>0,
\end{equation}
where $a$ denotes the level, $b$ the speed of mean-reversion, $a/b$ the long-term mean, $\sigma>0$ denotes the volatility, and $k\geq 0$ the elasticity. For some of its key properties, see Lemma~\ref{ergodicity-of-CKLS} in Appendix~\ref{aaa}.
\end{definition}
However, when $a\neq 0$, the CKLS model admits neither a closed-form solution nor an explicit transition density, which creates substantial technical difficulties for both stochastic analysis and statistical inference.
\begin{definition}
Let $(\Omega, \mathcal{F}, \{\mathcal{F}_{t}\}_{t \geq 0}, \mathbb{P})$ be a filtered probability space supporting a standard one-dimensional Wiener process $W_{t}$. The Cox--Ingersoll--Ross (CIR) process, denoted by $X_{t}$, is defined as the solution to SDE (the CIR model):
\begin{equation}\label{CIR-process}
    dX_{t}=(\alpha-\beta X_{t})dt+\gamma\sqrt{X_{t}}dW_{t},\quad X_{0}>0,
\end{equation}
where $\beta$ denotes the speed of mean-reversion, $\alpha/\beta$ the long-term mean, and $\gamma>0$ the volatility. The CIR model is a special case of the CKLS model with $k=\frac{1}{2}$. In general, only the ergodic case where $\beta>0$ is considered. For some of its key properties, see Lemma~\ref{ergodicity-of-CIR} in Appendix~\ref{aaa}.
\end{definition}
\noindent The square-root specification in the CIR model imposes a fixed elasticity of volatility with respect to the rate level, and is often viewed as too restrictive to accommodate the full empirical variability of short-rate movements. Empirical studies have indicated that short-rate volatility does depend on the level of the rate (heteroskedastic short-rates across samples), so elasticity $k$ is of central importance. In comparison, the CKLS model incorporates a state-dependent volatility whose magnitude adjusts systematically with the level of the process, thereby reflecting the empirically observed sensitivity of the conditional variance to the prevailing rate. Capturing this volatility-level relationship is a central element in modeling short-rates, and the estimation of the elasticity parameter becomes a crucial problem in empirical implementation. As an initial attempt, the seminal work \cite{chan1992empirical} employed GMM to estimate this parameter.\\[0.5\baselineskip]
\noindent A substantial body of literature investigates parametric inference for CKLS-type diffusions. We only cite some of the most recent results from roughly the past decade.\\
\noindent (\romannumeral1) \textbf{Regarding drift}, 
\cite{sanchez2016parameter} and \cite{monsalve2017parameter} address deterministic and periodic long-term trends, respectively, using a two-step Gaussian quasi-maximum likelihood procedure: they first estimate the drift and volatility coefficients from an Euler--Maruyama-based Gaussian regression with a smoothed functional trend, and then refine the trend component (via truncated Fourier series in the periodic case) and re-estimate the external parameters; \cite{mishura2022parameter} treat the Girsanov weight (also known as Radon-Nikod\'{y}m derivative/likelihood ratio or Dol\'{e}ans--Dade exponential in certain situations) as a likelihood function and derive continuous-time MLEs, establishing both strong consistency and asymptotic normality by combining CKLS ergodicity with a multidimensional martingale CLT. They also propose a consistent alternative estimator, inspired by the CIR-based methodology of \cite{dehtiar2022two}; Extending the mean structure, \cite{lyu2025inference} (see also \cite{lyu2023inference}) allow for a deterministic periodic mean-reversion level, establish ergodicity and positive Harris recurrence of the associated grid chain, and investigate both continuous unrestricted and restricted MLEs, deriving their joint asymptotic normality under local alternatives. They further introduce shrinkage and positive-part shrinkage estimators, which typically outperform the unrestricted MLEs in simulations; To capture regime changes, \cite{mazzonetto2025parameters}, building on \cite{su2015quasi}, develop a threshold CKLS (T-CKLS) model and obtain closed-form MLE/QMLE for interval-specific drift coefficients using localized power-time/path integrals. They establish asymptotic normality for these estimators under both continuous observation and high-frequency sampling over an infinite horizon.\\[0.5\baselineskip]
\noindent (\romannumeral2) \textbf{Regarding volatility and elasticity}, \cite{mazzonetto2025parameters} propose a nonparametric volatility estimator based on a It\^{o}-Tanaka formula: the quadratic variation of a suitably chosen functional can be written, on the one hand, as a diffusion-scaled occupation-time integral and, on the other, as a purely pathwise functional. Equating these two representations yields an estimator that does not rely on likelihoods and is valid on the event that the process spends positive time in the interval. The expression of the estimator's discretized version is also given. \cite{mishura2022parameter} discuss realized-volatility-type estimators for the elasticity that is consistent but not robust. Given an estimate of the elasticity, one can either obtain the volatility estimate directly by matching local quadratic variation increments to their theoretical form, or apply a Lamperti transformation to reduce the diffusion to unit volatility, and then estimate from the quadratic variation of the transformed process (see \cite{lyu2025inference}); Dedicated inference on the elasticity itself is scarce: \cite{dokuchaev2017pathwise} propose a pathwise nonparametric procedure based on an auxiliary complex-valued process whose log-modulus aggregates normalized increments; taking ratios at two tuning levels removes $\sigma$ and yields a consistent estimator of $k$, with $\sigma$ subsequently recovered—albeit at the cost of tuning sensitivity and the absence of a simple closed form.

\section{Assumptions, Main Results and Overall Flow}
\subsection{Standing Assumptions}
\begin{assumption}\label{ass1}[Model and parameter restriction]
\noindent \begin{itemize}
    \item Values of the following parameters are known: the level $a>0$, the mean-reversion speed $b>0$ (which sufficiently ensures CKLS's ergodicity) and the instantaneous volatility $\sigma>0$, a free scaling parameter constant $L>0$.
    \item The CKLS process is specified as in (\ref{CKLS-process}) with true elasticity $k_{0}$ satisfying
    \begin{equation}\nonumber
	   \frac{1}{2}<k_{0}<1 \quad\text{or}\quad k_{0}=\frac{1}{2}\text{ with }2a\geq\sigma^{2},
    \end{equation}
so that $\lambda_{t}$ enjoys the stability and ergodic properties collected in Lemma~\ref{ergodicity-of-CKLS} (existence and uniqueness, strict positivity and non-explosiveness, positive Harris recurrence, uniform moment bounds, ergodic limits of moments). Section~3 will elaborate on this.
\end{itemize}
\end{assumption}
\begin{assumption}\label{ass2}[Ultra-high-frequency sampling]\\
\noindent Let $T=T_{n}$ be the terminal time and $\Delta=\Delta_n>0$ the mesh size. For notational simplicity, we will continue to write $T$ and $\Delta$ in what follows.
\begin{itemize}
\item For each $n\in\mathbb{N}$, we observe
\begin{equation}\nonumber
	\{\lambda_{i\Delta}\}_{i=0}^{n}\quad\text{on }[0,T],\qquad T=n\Delta.
\end{equation} 
\item An ultra-high-frequency sampling scheme is such that, as $n\to\infty$,
\begin{equation}\nonumber
  \Delta\to0,\qquad n\Delta\to\infty,\qquad n\Delta^{2}\to0.
\end{equation}
\end{itemize}
\end{assumption}
\begin{remark}[More on the sampling scheme]
We observe the process at discrete, equally-spaced time points over an increasing time horizon. In particular, we consider an equidistant partition of $[0,T]$ with mesh size $\Delta_{n}:=T^{-\omega}$ for some $\omega>0$. Observations are then recorded at the grid points $t_{i}=i\Delta=iT^{-\omega}$, $i=0,\dots,n$, so that
\[
  n=\big[T/\Delta\big]=\big[T^{\omega+1}\big],\qquad \pi_n = \{t_{0},t_{1},\dots,t_{n}\}=\{0,\Delta,2\Delta,\dots,n\Delta=T\}.
\]
As $T\to\infty$, the condition $\omega>0$ is equivalent to
$\Delta\to0$ and $n\Delta^{2}=T^{1-\omega}$. Hence $0<\omega<1$ corresponds to $n\Delta^{2}\to\infty$ (the usual high-frequency setting), while $\omega>1$ yields $n\Delta^{2}\to0$, which is the ultra-high-frequency scheme adopted in our scenario.
\end{remark}
\subsection{Main Result: Construction of the Elasticity Estimator}
\noindent \textbf{First Step: Transformation from the CKLS to a CIR model (Section~3):} We revisit the main result of \cite{ning2025ckls}, in which a twice continuously differentiable state-space mapping $\mathcal{T}(\cdot\lvert k)$ together with a general Girsanov change of measure is used to transform the CKLS diffusion into a CIR-type diffusion. For each fixed $k\in[\frac{1}{2},1)$, set
\[
     X_{t}^{(k)}:=\mathcal{T}(\lambda_{t}\lvert k)=\frac{L^{2}}{4(1-k)^{2}}\lambda_{t}^{2-2k},
\]
and construct an equivalent probability measure $\mathbb{Q}$ under which
$X_{t}^{(k)}$ solves
\[
    dX_{t}^{(k)}=(\alpha-\beta X_{t}^{(k)})dt+\gamma\sqrt{X_{t}^{(k)}}d\widetilde{W}_{t},
\]
with $\beta\equiv\beta(k)=2b(1-k)$, while
$\alpha=\sigma^{2}L^{2}/4,\gamma=\sigma L$ are explicit functions of $\sigma$ alone. The resulting SDE only represents a subcase of the general CIR model, and Feller's condition fails (see Lemma~\ref{ergodicity-of-CIR})'s (\romannumeral1) because $2\alpha<\gamma^{2}$, thereby destroying the strict positivity of $X_{t}^{(k)}$. Section~3 revisits \cite{ning2025ckls}'s analysis of the explicit form of $\mathcal{T}$, the construction of $\mathbb{Q}$ and the verification of the martingale property of the Radon-Nikod\'{y}m density. Under Assumption~2.1, $X_{t}^{(k_{0})}$ follows a CIR-type diffusion is a CIR-type diffusion for which Feller's condition fails.\\[0.5\baselineskip]
\noindent \textbf{Second Step: Drift estimation and inversion (Sections~4,~6,~7):} Section~4 develops the asymptotic theory for the ultra-high-frequency estimator of \cite{prykhodko2025discretization} applied to the transformed CIR-type model under $\mathbb{Q}$. This estimator is applied because their method yields a valid drift estimator circumventing inverse-state terms and remains valid even when Feller's condition fails. Denote the drift estimator for mean-reversion speed $\beta$ (true value $\beta_{0}$) by $\hat{\beta}_{n}$ constructed from $\{X_{i\Delta}^{(k)}\}_{i=0}^{n}$ and the sampling scheme in Assumption~2.2; for its explicit expression, see \eqref{beta-form}. When $k_{0}$ is known and $\{X_{i\Delta}^{(k_{0})}\}_{i=0}^{n}$ is observed, the Prykhodko--Ralchenko's estimator of $\beta=\beta^{(k_{0})}$ is strongly consistent and asymptotically normal with rate $\sqrt{n\Delta}$ and limit variance $\frac{2\beta_{0}}{\alpha}(\alpha+\gamma^{2})$ with $\beta_{0}=\beta_{0}^{(k_{0})}$. In the transformed CIR-type model, we have $\alpha=\sigma^{2}L^{2}/4$ and $\gamma=\sigma L$, hence the limit variance is
\begin{equation}\nonumber
	\frac{2\beta_{0}}{\alpha}(\alpha+\gamma^{2})=\frac{2\cdot 2b(1-k_{0})}{\sigma^{2}L^{2}/4}\Big(\sigma^{2}L^{2}+\frac{\sigma^{2}L^{2}}{4}\Big)=20b(1-k_{0}).
\end{equation}
so that under $\mathbb{Q}$ we obtain the following CLT
\begin{equation}\nonumber
	\sqrt{n\Delta}\big(\hat{\beta}_{n}-\beta_{0}\big)\xrightarrow{d}\mathcal{N}\big(0,20b(1-k_{0})\big).
\end{equation}
In practice $k_{0}$ is unknown, so $\mathcal{T}(\cdot\lvert k_{0})$ cannot be evaluated directly. Section~6 constructs two kinds of model-based initial estimator $\hat{k}_{0,n}^{(l)}$ for $l=1,2$ (true value $k_{0}$) for the elasticity based on realized quadratic
variation of the CKLS process (see (\ref{initial-etimators}) for the explicit expressions), and shows that under Assumptions~\ref{ass1}-\ref{ass2},
\begin{equation}\nonumber
	\hat{k}_{0,n}\xrightarrow{a.s.}k_{0}\quad\text{and}\quad\hat{k}_{0,n}-k_{0}=o_{p}\big((n\Delta)^{-\frac{1}{2}}\big)\qquad\text{as }n\to\infty.
\end{equation}
Using $\hat{k}_{0,n}$ as a plug-in in the state-space map, we form the transformed/pseudo CIR data:
\begin{equation}\nonumber
	X^{(\hat{k}_{0,n})}_{i\Delta}:=\mathcal{T}(\lambda_{i\Delta}\lvert \hat{k}_{0,n})=\frac{L^{2}}{4(1-\hat{k}_{0,n})^{2}}\lambda_{i\Delta}^{2-2\hat{k}_{0,n}},\qquad i=0,\dots,n,
\end{equation}
then apply the Prykhodko--Ralchenko's estimation procedure to $\{X^{(\hat{k}_{0,n})}_{i\Delta}\}_{i=0}^{n}$ to obtain the estimator $\hat{\beta}_{n}^{(\hat{k}_{0,n})}$; for its explicit expression, see \eqref{final-estimation}.

Section~7 shows that the plug-in error from using the transformed data $\{X^{(\hat{k}_{0,n})}_{i\Delta}\}_{i=0}^{n}$ in place of the observed CKLS data $\{\lambda_{i\Delta}\}_{i=0}^{n}$ is asymptotically negligible at the $\sqrt{n\Delta}$-scale. An estimator of the CKLS elasticity under $\mathbb{Q}$ is obtained by inverting the drift-elasticity relationship $\beta(k)=2b(1-k)$:
\begin{equation}\nonumber
	\hat{k}_{n}:=1-\frac{\hat{\beta}_{n}^{(\hat{k}_{0,n})}}{2b}=1-\frac{\sigma^{2}(1-\hat{k}_{0,n})^{2}}{b}\frac{n\sum_{i=1}^{n}\lambda_{(i-1)\Delta}^{2-2\hat{k}_{0,n}}}{n\sum_{i=1}^{n}\lambda_{(i-1)\Delta}^{4-4\hat{k}_{0,n}}-\Big(\sum_{i=1}^{n}\lambda_{(i-1)\Delta}^{2-2\hat{k}_{0,n}}\Big)^{2}}.
\end{equation}
Thus $\hat{k}_{n}$ is an explicit function of the observed CKLS observations $\{\lambda_{i\Delta}\}_{i=0}^{n}$ only.

\subsection{Main Result: Asymptotic Properties of the Elasticity Estimator}
We now summarize the asymptotic behavior of the estimator $\hat{k}_{n}$. The proofs are carried out within \textbf{the stable convergence in law framework developed in Section~5}, which we use to transfer the CLT under $\mathbb{Q}$ to the original measure $\mathbb{P}$ and to verify that the estimator remains asymptotically valid after the equivalent change of measure.
\begin{theorem}[Consistency]\label{thm:consistency-k}
Suppose Assumptions~\ref{ass1}-\ref{ass2} hold. Then
\begin{equation}\nonumber
	\hat{k}_{n}\xrightarrow{p}k_{0}\qquad\text{as }n\to\infty.
\end{equation}
\end{theorem}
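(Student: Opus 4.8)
The plan is to reduce the consistency of $\hat{k}_n$ to that of the plug-in drift estimator $\hat{\beta}_n^{(\hat{k}_{0,n})}$, and then to assemble the pieces already prepared in Sections~4,~6 and~7. First I would record the exact affine identity underlying the construction. Since $\beta_0=\beta(k_0)=2b(1-k_0)$ gives $1-\beta_0/(2b)=k_0$, and $\hat{k}_n=1-\hat{\beta}_n^{(\hat{k}_{0,n})}/(2b)$ with $b>0$ a known constant, subtraction yields the deterministic relation
\begin{equation}\nonumber
  \hat{k}_n-k_0=-\frac{1}{2b}\bigl(\hat{\beta}_n^{(\hat{k}_{0,n})}-\beta_0\bigr).
\end{equation}
Because $1/(2b)$ is a fixed constant, it therefore suffices to prove $\hat{\beta}_n^{(\hat{k}_{0,n})}\xrightarrow{p}\beta_0$.

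Next I would observe that the target is a statement about convergence in probability, which—unlike the CLT treated later—does not require the full stable-convergence apparatus of Section~5. Because $\mathbb{Q}$ is built from $\mathbb{P}$ through a Girsanov change of measure with a genuine martingale density (Section~3), the two measures are mutually absolutely continuous, $\mathbb{P}\sim\mathbb{Q}$; hence their null sets coincide and convergence in probability is invariant, i.e.\ $Y_n\xrightarrow{\mathbb{P}}c$ if and only if $Y_n\xrightarrow{\mathbb{Q}}c$ for any measurable statistic $Y_n$ and constant $c$. It is thus enough to establish $\hat{\beta}_n^{(\hat{k}_{0,n})}\xrightarrow{\mathbb{Q}}\beta_0$, and I would run the argument under $\mathbb{Q}$, where $X_t^{(k_0)}$ is a genuine (Feller-failing) CIR-type diffusion and the positive Harris recurrence exploited by the estimator is available.

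The core is then a two-term decomposition separating the plug-in from the oracle:
\begin{equation}\nonumber
  \hat{\beta}_n^{(\hat{k}_{0,n})}-\beta_0=\underbrace{\bigl(\hat{\beta}_n^{(\hat{k}_{0,n})}-\hat{\beta}_n^{(k_0)}\bigr)}_{\text{plug-in error}}+\underbrace{\bigl(\hat{\beta}_n^{(k_0)}-\beta_0\bigr)}_{\text{oracle error}}.
\end{equation}
For the oracle error I would invoke the Prykhodko--Ralchenko limit theory recalled in Section~4: with the true exponent $k_0$, the estimator $\hat{\beta}_n^{(k_0)}$ is strongly consistent under $\mathbb{Q}$, so this term is $o_p(1)$. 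For the plug-in error I would appeal to the negligibility result of Section~7, which shows that replacing the exponent $k_0$ by the initial estimate $\hat{k}_{0,n}$ in the transformed data perturbs the drift estimator by $o_p((n\Delta)^{-1/2})$, hence a fortiori by $o_p(1)$; its inputs are the a.s.\ convergence and $o_p((n\Delta)^{-1/2})$-rate of $\hat{k}_{0,n}$ from Section~6 together with the uniform CKLS moment bounds of Assumption~\ref{ass1}. Adding the two terms gives $\hat{\beta}_n^{(\hat{k}_{0,n})}\xrightarrow{\mathbb{Q}}\beta_0$; transferring back to $\mathbb{P}$ by measure equivalence and applying the affine identity delivers $\hat{k}_n\xrightarrow{p}k_0$.

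The hard part is entirely concentrated in the plug-in term. The estimator depends on $\hat{k}_{0,n}$ nonlinearly—through both the random exponents $2-2\hat{k}_{0,n}$ and $4-4\hat{k}_{0,n}$ in the power sums and the prefactor $(1-\hat{k}_{0,n})^2$—so the relevant ergodic averages cannot be treated by a single fixed-integrand law of large numbers; one needs a locally uniform (in the exponent) ergodic-averaging control, which is exactly what the moment bounds and the fast rate of $\hat{k}_{0,n}$ supply. For consistency alone, however, only the crude $o_p(1)$ consequence of the sharper Section~7 estimate is required, so no work beyond quoting that section is needed here.
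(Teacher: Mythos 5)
Your skeleton reproduces the paper's route for consistency almost verbatim: the affine inversion $\hat{k}_{n}-k_{0}=-(2b)^{-1}\big(\hat{\beta}_{n}^{(\hat{k}_{0,n})}-\beta_{0}\big)$, the split into an oracle error (killed by the strong consistency of Theorem~\ref{beta-theorem} under $\mathbb{Q}$) and a plug-in error (killed by Section~7, whose inputs are the rate $\hat{k}_{0,n}-k_{0}=o_{p}((n\Delta)^{-1/2})$ from Theorem~\ref{k-rate} and the $O_{p}(1)$ bounds on $f_{n}'$, $f_{n}''$). The genuine gap is your measure-transfer step. You claim that since $\mathbb{P}\sim\mathbb{Q}$, "null sets coincide and convergence in probability is invariant, i.e.\ $Y_{n}\xrightarrow{\mathbb{P}}c$ iff $Y_{n}\xrightarrow{\mathbb{Q}}c$ for any measurable statistic $Y_{n}$." This is false in the present long-time regime. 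The Girsanov construction of Section~3 gives equivalence on each $\mathcal{F}_{T}$ for finite $T$, but here $Y_{n}=\hat{\beta}_{n}^{(\hat{k}_{0,n})}$ is $\mathcal{F}_{T_{n}}$-measurable with $T_{n}=n\Delta\to\infty$, so the events $A_{n}=\{\lvert Y_{n}-c\lvert>\varepsilon\}$ escape every fixed $\mathcal{F}_{T}$; what the transfer requires is contiguity of the sequence of restricted measures, and equivalence at each finite horizon does not imply contiguity (on the tail $\sigma$-field the two laws are typically mutually singular). Indeed, in this very model the naive transfer fails for the building blocks of the estimator: by CIR ergodicity the average $\frac{1}{n}\sum_{i=1}^{n}X_{(i-1)\Delta}^{(k_{0})}$ converges under $\mathbb{Q}$ to the stationary mean $\alpha/\beta_{0}$, which does not involve $a$, whereas under $\mathbb{P}$ it converges, by CKLS ergodicity (Lemma~\ref{ergodicity-of-CKLS}), to $\mathbb{E}_{\pi_{0}}\big[\mathcal{T}(\lambda_{t}\lvert k_{0})\big]$, which does depend on $a$; the two constants differ in general, so convergence in probability to a constant is \emph{not} preserved between $\mathbb{P}$ and $\mathbb{Q}$ for horizon-dependent statistics.

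This is precisely why the paper does not take your shortcut: it routes the transfer through the stable-convergence framework of Section~5 — LeCam's third lemma (Lemma~\ref{LeCam}) and Theorem~\ref{preservation of convergence} — whose operative hypothesis is joint convergence of the statistic with the $\mathcal{F}_{T}$-measurable likelihood ratio $\Lambda_{T}$ with $\mathbb{E}[\Lambda]=1$, which is exactly the contiguity-type control your argument omits; the remark following Theorem~\ref{preservation of convergence} then records that consistency (along with the mixed normal law) is what gets carried over to $\mathbb{P}$. To repair your proof you should first upgrade the $\mathbb{Q}$-limit theory to stable convergence (as the paper does via Theorem~\ref{theorem-heyde} in Section~5.1) and then invoke Theorem~\ref{preservation of convergence}, rather than appealing to coincidence of null sets; with that substitution the rest of your proposal goes through unchanged.
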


\begin{theorem}[Stable central limit theorem]\label{thm:stable-CLT-k}
Under Assumptions~\ref{ass1}-\ref{ass2}, the estimator $\hat{k}_{n}$ satisfies
\begin{equation}\nonumber
	\sqrt{n\Delta}(\hat{k}_{n}-k_{0})\xrightarrow{d_{st}}\mathcal{N}\Big(0,\frac{5(1-k_{0})}{b}\Big)\qquad \text{as }n\to\infty,
\end{equation}
where $\xrightarrow{d_{st}}$ denotes stable convergence in law with respect to the underlying filtration.
\end{theorem}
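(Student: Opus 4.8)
The plan is to reduce the claim to the drift CLT already available under $\mathbb{Q}$ and then transport it to $\mathbb{P}$ through the stability machinery of Section~5. The crucial structural observation is that the inversion $\beta\mapsto 1-\beta/(2b)$ is \emph{exactly linear}, so no delta-method remainder intervenes: since $k_{0}=1-\beta_{0}/(2b)$ with $\beta_{0}=2b(1-k_{0})$, one has the exact identity
\begin{equation}\nonumber
\sqrt{n\Delta}\,(\hat{k}_{n}-k_{0})=-\frac{1}{2b}\,\sqrt{n\Delta}\,\big(\hat{\beta}_{n}^{(\hat{k}_{0,n})}-\beta_{0}\big).
\end{equation}
It therefore suffices to establish a \emph{stable} CLT for $\sqrt{n\Delta}(\hat{\beta}_{n}^{(\hat{k}_{0,n})}-\beta_{0})$ with limit $\mathcal{N}(0,20b(1-k_{0}))$ and to push the linear factor $-1/(2b)$ through, which rescales the variance by $1/(4b^{2})$ and produces $20b(1-k_{0})/(4b^{2})=5(1-k_{0})/b$.

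For the drift error I would decompose
\begin{equation}\nonumber
\hat{\beta}_{n}^{(\hat{k}_{0,n})}-\beta_{0}=\big(\hat{\beta}_{n}^{(\hat{k}_{0,n})}-\hat{\beta}_{n}^{(k_{0})}\big)+\big(\hat{\beta}_{n}^{(k_{0})}-\beta_{0}\big),
\end{equation}
and treat the oracle term (known $k_{0}$) and the plug-in discrepancy separately. For the oracle term, the Prykhodko--Ralchenko analysis of Section~4 furnishes, under $\mathbb{Q}$, the convergence $\sqrt{n\Delta}(\hat{\beta}_{n}^{(k_{0})}-\beta_{0})\xrightarrow{d}\mathcal{N}(0,20b(1-k_{0}))$; because $X^{(k_{0})}$ is positively Harris recurrent under $\mathbb{Q}$ and the estimator is a smooth functional of ergodic averages driven by a martingale with \emph{deterministic} asymptotic variance, this upgrades to \emph{stable} convergence with respect to the filtration, the limit being $\mathcal{F}$-independent precisely because its variance $20b(1-k_{0})$ is constant. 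For the plug-in term I would invoke Section~7: since $\hat{k}_{0,n}-k_{0}=o_{p}((n\Delta)^{-1/2})$ and the map $k\mapsto\hat{\beta}_{n}^{(k)}$ is, through $X^{(k)}_{i\Delta}=\mathcal{T}(\lambda_{i\Delta}\mid k)$, Lipschitz in $k$ near $k_{0}$ with stochastically bounded modulus, the discrepancy is $o_{p}((n\Delta)^{-1/2})$, i.e. $\sqrt{n\Delta}(\hat{\beta}_{n}^{(\hat{k}_{0,n})}-\hat{\beta}_{n}^{(k_{0})})=o_{p}(1)$. Slutsky's lemma then transfers the oracle stable CLT to $\hat{\beta}_{n}^{(\hat{k}_{0,n})}$.

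It remains to carry the stable CLT from $\mathbb{Q}$ to the physical measure $\mathbb{P}$. Here I rely on the invariance result of Section~5: since $\mathbb{P}\sim\mathbb{Q}$ with an $\mathcal{F}$-measurable Radon--Nikod\'{y}m density, stable convergence in law is preserved under the equivalent change of measure, and—decisively—the limit law is unchanged because the Gaussian limit has deterministic variance and is hence independent of $\mathcal{F}$. Concretely, for bounded continuous $f$ and bounded $\mathcal{F}$-measurable $Y$ one writes $\mathbb{E}_{\mathbb{P}}[f(\cdot)\,Y]=\mathbb{E}_{\mathbb{Q}}[f(\cdot)\,Y\,\tfrac{d\mathbb{P}}{d\mathbb{Q}}]$ and passes to the limit using the $\mathbb{Q}$-stable convergence with test variable $Y\,\tfrac{d\mathbb{P}}{d\mathbb{Q}}$; the $\mathcal{F}$-independence of the limit, together with $\mathbb{E}_{\mathbb{Q}}[\tfrac{d\mathbb{P}}{d\mathbb{Q}}]=1$, guarantees that both measures assign it the common law $\mathcal{N}(0,5(1-k_{0})/b)$. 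Combined with the exact linearization of the first paragraph, this yields the assertion under $\mathbb{P}$.

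The main obstacle is the upgrade from ordinary to \emph{stable} convergence for the oracle drift estimator, together with the verification that its limit is $\mathcal{F}$-independent: this is exactly what licenses the measure change, since plain convergence in distribution under $\mathbb{Q}$ would \emph{not} survive the nontrivial reweighting by $d\mathbb{P}/d\mathbb{Q}$. A secondary technical point is ensuring that the plug-in Lipschitz bound holds uniformly on an event of probability tending to one despite the degeneration of $\mathcal{T}(\cdot\mid k)$ as $k\uparrow 1$ (the factor $(1-k)^{-2}$); this requires first confining $\hat{k}_{0,n}$ to a compact subinterval of $(\tfrac{1}{2},1)$ with high probability and controlling the $\log\lambda_{i\Delta}$ factors arising from $\partial_{k}\lambda^{2-2k}$ via the uniform moment bounds of Lemma~\ref{ergodicity-of-CKLS} before applying the sensitivity estimate.
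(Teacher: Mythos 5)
Your proposal is correct and follows essentially the same route as the paper: the exact linear inversion $\hat{k}_{n}=1-\hat{\beta}_{n}^{(\hat{k}_{0,n})}/(2b)$ with variance scaling $20b(1-k_{0})/(4b^{2})=5(1-k_{0})/b$, the oracle/plug-in decomposition with the plug-in term killed by $\hat{k}_{0,n}-k_{0}=o_{p}((n\Delta)^{-1/2})$ together with an $O_{p}(1)$ sensitivity bound on $k\mapsto\hat{\beta}_{n}^{(k)}$ (the paper does this via a Taylor expansion with $f_{n}',f_{n}''=O_{p}(1)$ from the Harris-recurrence Riemann-sum theorem, which is the same content as your Lipschitz-modulus argument), the upgrade to stable convergence via the martingale CLT criterion of Heyde, and the transfer from $\mathbb{Q}$ to $\mathbb{P}$ via the $\mathcal{F}_{T}$-measurable density (where the paper, like you, notes the unbounded density must be handled, doing so by a domination/uniform-integrability lemma rather than your sketched truncation).
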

\noindent\textit{Sketch of argument.}
Under $\mathbb{Q}$, with the data $\{X_{i\Delta}\}_{i=0}^{n}$ the estimator of \cite{prykhodko2025discretization} together with the specific variance calculation for our case yields an asymptotic variance $20b(1-k_{0})$ for $\hat{\beta}_{n}$. By the criterion in \cite{heyde1997quasi}, this convergence is in fact stable. Since $\beta(k)=2b(1-k)$, the transformation $k \mapsto 1-\beta(k)/(2b)$ immediately yields a stable CLT for $\hat{k}_{n}:=1-\hat{\beta}_{n}/(2b)$ constructed from data $\{X_{i\Delta}\}_{i=0}^{n}$ with variance $5(1-k_{0})/b$.

Since the plug-in estimator $\hat{k}_{0,n}$ satisfies $\hat{k}_{0,n}-k_{0}=o_{p}((n\Delta)^{-\frac{1}{2}})$ in the ultra-high-frequency setting (asymptotically negligible at the $\sqrt{n\Delta}$-scale), the same stable CLT under $\mathbb{Q}$ continues to hold when $\hat{\beta}_{n}$ is
replaced by its plug-in version $\hat{\beta}_{n}^{(\hat{k}_{0,n})}$, that is, for the
statistic $1-\hat{\beta}_{n}^{(\hat{k}_{0,n})}/(2b)$
constructed from $\{X_{i\Delta}^{(\hat{k}_{0,n})}\}_{i=0}^{n}=\{\mathcal{T}(\lambda_{i\Delta}\lvert \hat{k}_{0,n})\}_{i=0}^{n}$.

Finally, \cite{mykland2009inference} provides an invariance theorem for stable convergence in law under equivalent changes of measure. Since $d\mathbb{Q}/d\mathbb{P}$ is $\mathcal{F}_{T}$-measurable, the stable limit under $\mathbb{Q}$ implies stable convergence under $\mathbb{P}$ to the same limit. This yields Theorem~\ref{thm:stable-CLT-k} for $\hat{k}_{n}:=\hat{k}_{n}^{\mathbb{P}}$, which has the same explicit expression as $\hat{k}_{n}^{\mathbb{Q}}$ but is defined under $\mathbb{P}$.\\[0.5\baselineskip]
\noindent Taken together, Theorems~\ref{thm:consistency-k} and~\ref{thm:stable-CLT-k} show that, in the ultra-high-frequency setting and under mild structural conditions on the CKLS model, the estimator $\hat{k}_{n}$ is a closed-form, $p$-consistent, and asymptotically normal estimator of the elasticity parameter, with an explicit asymptotic variance that can be used for inference and confidence interval construction.

\section{Transformation from the CKLS Process to the CIR Process}
This section establishes an analytical transformation from the general CKLS process ($\frac{1}{2}\leq k<1$) to the CIR process of a particular form in which Feller's condition fails ($2\alpha=\frac{\gamma^{2}}{2}<\gamma^{2}$). We show that a specific smooth state mapping combined with a general Girsanov change of measure maps the two processes. This construction, which forms the core result of \cite{ning2025ckls}, provides both a theoretical connection between the two model families and a practical tool for estimation.

\subsection{A State-space Mapping on CKLS Process}
\subsubsection{Designing \texorpdfstring{$\mathcal{T}$}{T} to Make the Diffusion into a Square-root Type}
\noindent Assume there exists a twice differentiable mapping $\mathcal{T}$ such that $X_{t}=\mathcal{T}(\lambda_{t})$, where $\lambda_{t}$ and $X_{t}$ solve (\ref{CKLS-process}) and (\ref{CIR-process}), respectively. Applying It\^{o}'s lemma to $X_{t}=\mathcal{T}(\lambda_{t})$ shows that the diffusion coefficient of the $X_{t}$'s SDE becomes $\frac{d\mathcal{T}(\lambda_{t})}{d\lambda_{t}}\sigma\lambda_{t}^{k}dW_{t}$. To make it a CIR-type diffusion term, this coefficient must match $\gamma\sqrt{\mathcal{T}(\lambda_{t})}$. Hence the mapping $\mathcal{T}$ is required to satisfy $\frac{d\mathcal{T}(\lambda_{t})}{d\lambda_{t}}\sigma\lambda_{t}^{k}=\gamma\sqrt{\mathcal{T}(\lambda_{t})}$. With $L:=\gamma/\sigma>0$ a positive constant, we obtain the defining ODE for $\mathcal{T}$: $\frac{d\mathcal{T}(x)}{dx}x^{k}=L\sqrt{\mathcal{T}(x)}$. This separable ODE yields (choosing zero integration constant for parsimony) $\mathcal{T}:\mathbb{R}\to[0,+\infty)$ such that
\begin{equation}\nonumber
    \mathcal{T}(x\lvert k)=\begin{cases}
    \frac{L^{2}}{4(1-k)^{2}}x^{2-2k},&\text{if }k\neq 1;\\
    \frac{L^{2}}{4}\big(\log x\big)^{2},&\text{if }k=1.
\end{cases}
\end{equation}
In particular, for the case $k\neq 1$, the mapping and its derivatives and inverse are:
\begin{align}\label{eq:T_solution}
    \mathcal{T}(x\lvert k)&=\frac{L^{2}}{4(1-k)^{2}}x^{2-2k}\\
    \frac{\partial}{\partial x}\mathcal{T}(x\lvert k)=\frac{L^{2}}{2(1-k)}x^{1-2k},~
    \frac{\partial^{2}}{\partial x^{2}}\mathcal{T}(x\lvert k)&=\frac{L^{2}(1-2k)}{2(1-k)}x^{-2k},~
    \mathcal{T}^{-1}(y\lvert k)=\Big[\frac{2(1-k)}{L}\sqrt{y}\Big]^{\frac{1}{1-k}}.\nonumber
\end{align}
\subsubsection{Parameter Restriction}
For the mapping (\ref{eq:T_solution}) to be well defined and for $X_{t}^{(k)}:=\mathcal{T}(\lambda_{t}\lvert k)$ to constitute a valid CIR diffusion, the following parameter restriction $\frac{1}{2}\leq k<1$ is essential. To briefly explain why, we first assume $x\in\mathbb{R}$ and will repeatedly invoke Lemma~\ref{ergodicity-of-CKLS}'s (\romannumeral1) in the following.\\[0.5\baselineskip]
\noindent \textit{Case $k=1$:} When deriving expression of $\mathcal{T}$, we used the identity $\sqrt{\mathcal{T}(\lambda_{t}\lvert k)}=\frac{L}{2}\log \lambda_{t}$, which requires $\frac{L}{2}\log \lambda_{t}\geq 0$. However, since $\lambda_{t}\in(0,+\infty)$ holds almost surely for $k>\frac{1}{2}$, it is possible that $\lambda_{t}$ falls in the range $(0,1)$, causing $\log \lambda_{t}<0$. Hence the identity cannot hold on the event $\{\lambda_{t}\in(0,1)\}$, and we cannot guarantee that $\sqrt{\mathcal{T}(\lambda_{t}\lvert k)}$ always remains non-negative. This means that the case $k=1$ must be excluded.\\
\noindent \textit{Case $k\neq 1$:} When deriving expression of $\mathcal{T}$, we used the identity $\sqrt{\mathcal{T}(\lambda_{t}\lvert k)}=\frac{L}{2(1-k)}\lambda_{t}^{1-k}$, which requires $\frac{L}{2(1-k)}x^{1-k}\geq 0$. As $L>0$ is already guaranteed, the sign of $\frac{\lambda_{t}^{1-k}}{1-k}$ is of concern:
  \begin{itemize}
    \item \textbf{Subcase $k>1$:} Since $1-k<0$ and $\lambda_{t}>0$, we have $\lambda_{t}^{1-k}>0$ almost surely. Thus $\frac{L}{2(1-k)}\lambda_{t}^{1-k}<0$, implying that this case must be excluded.
    \item \textbf{Subcase $0<k<\frac{1}{2}$:} Here $1-k>0$ while $\lambda_{t}$ ranges over $(-\infty,+\infty)$, so $\lambda_{t}^{1-k}$ need not be nonnegative. For instance, if $k=\frac{1}{5}$, then $x\mapsto x^{4/5}$ is even function and nonnegative on $\mathbb{R}$; if $k=\frac{2}{5}$, then $x\mapsto x^{3/5}$ is odd, positive on $(0,+\infty)$ but negative on $(-\infty,0)$. Thus $\frac{\lambda_{t}^{1-k}}{1-k}$ is not guaranteed to be nonnegative, and this case must be excluded.
  \end{itemize}
\noindent The restriction $\frac{1}{2}\leq k<1$ ensures a well-defined $\mathcal{T}:[0,+\infty)\to[0,+\infty)$. Note that the case $k=\frac{1}{2}$ with $2a<\sigma^{2}$ (Feller's condition fails) is not excluded as $\lambda_{t}\geq 0$ (remaining nonnegative, though hitting $0$ with positive probability).

\subsection{Dynamics of the Mapped Process under \texorpdfstring{$\mathbb{Q}$}{Q}}
\subsubsection{Dynamics of \texorpdfstring{$X_{t}$}{Xt} under \texorpdfstring{$\mathbb{P}$}{P}}
Assume $\frac{1}{2}\leq k<1$, the full result of applying It\^{o}'s lemma to $X_{t}^{(k)}=\mathcal{T}(\lambda_{t}\lvert k)$ is the following
\begin{align}\nonumber
    dX_{t}^{(k)}&=\frac{\partial}{\partial \lambda_{t}}\mathcal{T}(\lambda_{t}\lvert k)(a-b\lambda_{t})dt+\frac{1}{2}\frac{\partial^{2}}{\partial \lambda_{t}^{2}}\mathcal{T}(\lambda_{t}\lvert k)\sigma^{2}\lambda_{t}^{2k}dt+\frac{\partial}{\partial \lambda_{t}}\mathcal{T}(\lambda_{t}\lvert k)\sigma\lambda_{t}^{k}dW_{t}\\
    &=\Big[\frac{L^{2}}{2(1-k)}\lambda_{t}^{1-2k}(a-b\lambda_{t})+\frac{1}{2}\frac{L^{2}(1-2k)}{2(1-k)}\lambda_{t}^{-2k}\sigma^{2}\lambda_{t}^{2k}\Big]dt+\sigma L \frac{L}{2(1-k)}\lambda_{t}^{1-k}dW_{t}.\nonumber
\end{align}
Using $\sqrt{\mathcal{T}(x\lvert k)}=\frac{L}{2(1-k)}x^{1-k}$ and $X_{t}^{(k)}=\frac{L^{2}}{4(1-k)^{2}}\lambda_{t}^{2-2k}$ simplifies the expression:
\begin{equation} \label{eq:ito_P}
    dX_{t}^{(k)}=\Big[\underbrace{\frac{aL^{2}}{2(1-k)}\lambda_{t}^{1-2k}-2b(1-k)X_{t}^{(k)}+\frac{\sigma^{2}L^{2}(1-2k)}{4(1-k)}}_{=:\mu(\lambda_{t},X_{t}^{(k)})}\Big]dt+\underbrace{\sigma L \sqrt{X_{t}^{(k)}}}_{=:\nu(X_{t}^{(k)})}dW_{t}.
\end{equation}
The diffusion term already admits the desired CIR form, while the drift is written in terms of both $X_{t}^{(k)}$ and $\lambda_{t}$, with $\lambda_{t}$ itself an explicit function of $X_{t}^{(k)}$. Thus the drift in fact depends solely on $X_{t}^{(k)}$, so $X_{t}^{(k)}$ remains Markovian. However, once $\lambda_{t}$ is substituted out, the drift becomes nonlinear in $X_{t}^{(k)}$, namely $-2b(1-k)X_{t}^{(k)}+\big[[\frac{2(1-k)}{L}]^{2}X_{t}^{(k)}\big]^{1-\frac{1}{2}\frac{1}{1-k}}$, which is not of a CIR form. This nonlinearity makes direct analysis cumbersome, motivating us to circumvent it by applying an appropriate Girsanov change of measure that reshapes the drift into a linear function of $X_{t}^{(k)}$.

\subsubsection{The Measure Change via General Girsanov's Theorem}
The goal is to find an equivalent probability measure $\mathbb{Q}$ under which the drift term of $X_{t}^{(k)}$ becomes of the form $\alpha-\beta X_{t}^{(k)}$. Compared with (\ref{eq:ito_P}), we choose to preserve the drift term $-2b(1-k)X_{t}^{(k)}$ (also to make the difference of new and old drifts depend solely on $\lambda_{t}$ and no $X_{t}^{(k)}$) and modify the rest. Let the target drift under $\mathbb{Q}$ be:
$$\widetilde{\mu}(X_{t}^{(k)}):=\frac{\sigma^{2}L^{2}}{4}-2b(1-k)X_{t}^{(k)}.$$
This expression is deliberately designed so that $\widetilde{\mu}$ is linear in $X_{t}^{(k)}$ while verifying $\Lambda_{t}$'s martingale property constituted of Girsanov kernel (see the theorem that follows) is not intractably complicated.

By the general Girsanov's theorem, we can adjust the drift to make it be of the desired form by defining a new process $\widetilde{W}_{t}:=W_{t}+\int_{0}^{t}q_{s}ds$, where $\widetilde{W}_{t}$ is a Brownian motion under $\mathbb{Q}$, with $q_{t}:=\big(\mu(\lambda_{t},X_{t}^{(k)})-\widetilde{\mu}(X_{t}^{(k)})\big)/\nu(X_{t}^{(k)})$ (Girsanov kernel) admitting the expression:
\begin{equation}\label{eq:girsanov_kernel}
    \frac{\Big[\frac{aL^{2}}{2(1-k)}\lambda_{t}^{1-2k}-2b(1-k)X_{t}^{(k)}+\frac{\sigma^{2}L^{2}(1-2k)}{4(1-k)}\Big]-\Big[\frac{\sigma^{2}L^{2}}{4}-2b(1-k)X_{t}^{(k)}\Big]}{\sigma L \sqrt{X_{t}^{(k)}}}=\frac{a}{\sigma}\lambda_{t}^{-k}-\frac{k\sigma}{2}\lambda_{t}^{k-1}.
\end{equation}
Finally we can state the main result of \cite{ning2025ckls}.
\begin{theorem}\label{thm:main}[\cite{ning2025ckls}, \lbrack Section~2\rbrack]\\
Let the CKLS process $\lambda_{t}$ be defined under the measure $\mathbb{P}$ with $\frac{1}{2}\leq k<1$. Let the mapped process $X_{t}^{(k)}:=\mathcal{T}(\lambda_{t}\lvert k)$ with $\mathcal{T}$ defined by (\ref{eq:T_solution}). Define an equivalent probability measure $\mathbb{Q}$ by $\mathbb{Q}(B):=\mathbb{E}^{\mathbb{P}}\big[e^{\Lambda_{t}}\mathds{1}_{B}\big]$, for any $B\in\mathcal{F}_{t}$ via the Dol\'{e}ans--Dade exponential (not yet to be the Radon-Nikod\'{y}m derivative until it is verified that $\mathbb{E}^{\mathbb{P}}[\Lambda_{t}]=1$, i.e., $\Lambda_{t}$ is a (true) martingale):
\begin{equation}\label{Dol\'eans-Dade exponential}
    \Lambda_{t}:=\frac{d\mathbb{Q}}{d\mathbb{P}}\Big\lvert_{\mathcal{F}_{t}}=\exp\Big\{-\int_{0}^{t}q_{s}dW_{s}-\frac{1}{2}\int_{0}^{t}q_{s}^{2}ds\Big\},
\end{equation}
where $q_{t}$ is the Girsanov kernel given by (\ref{eq:girsanov_kernel}). Since it can be proven that $\Lambda_{t}$ is indeed a (true) martingale, under the measure $\mathbb{Q}$, the process $X_{t}^{(k)}$ follows the following transformed CIR-type SDE:
\begin{equation}\label{eq:final_cir}
    dX_{t}^{(k)}=\Big(\frac{\sigma^{2}L^{2}}{4}-2b(1-k)X_{t}^{(k)}\Big)dt+\sigma L\sqrt{X_{t}^{(k)}}d\widetilde{W}_{t},
\end{equation}
where $\widetilde{W}_{t}:=W_{t}+\int_{0}^{t}q_{s}ds$ is a standard $\mathbb{Q}$-Brownian motion.
\end{theorem}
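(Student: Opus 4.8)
The plan is to prove the statement in two stages: first establish that the Dol\'{e}ans--Dade exponential $\Lambda_{t}$ is a genuine $\mathbb{P}$-martingale on every finite horizon $[0,T]$, and then read off the announced dynamics \eqref{eq:final_cir} as a direct consequence of the general Girsanov theorem. The second stage is essentially bookkeeping, so I would dispatch it quickly: once $\Lambda_{t}$ is a true martingale, $\mathbb{Q}$ is a probability measure equivalent to $\mathbb{P}$ on each $\mathcal{F}_{T}$, the process $\widetilde{W}_{t}:=W_{t}+\int_{0}^{t}q_{s}\,ds$ is a standard $\mathbb{Q}$-Brownian motion, and substituting $dW_{t}=d\widetilde{W}_{t}-q_{t}\,dt$ into the $\mathbb{P}$-dynamics \eqref{eq:ito_P} together with the defining identity $\nu(X_{t}^{(k)})\,q_{t}=\mu(\lambda_{t},X_{t}^{(k)})-\widetilde{\mu}(X_{t}^{(k)})$ collapses the drift to $\widetilde{\mu}(X_{t}^{(k)})=\tfrac{\sigma^{2}L^{2}}{4}-2b(1-k)X_{t}^{(k)}$, which is exactly \eqref{eq:final_cir}. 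As a preliminary I would also record the algebraic simplification behind \eqref{eq:girsanov_kernel}, namely $q_{t}=\tfrac{a}{\sigma}\lambda_{t}^{-k}-\tfrac{k\sigma}{2}\lambda_{t}^{k-1}$; this reduction (cancellation of the common $-2b(1-k)X_{t}^{(k)}$ term and division by $\sigma L\sqrt{X_{t}^{(k)}}=\tfrac{\sigma L^{2}}{2(1-k)}\lambda_{t}^{1-k}$) is routine and I would not dwell on it.

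The heart of the matter is therefore the martingale property, and this is where the failure of Feller's condition makes itself felt. Since under Assumption~\ref{ass1} the CKLS process stays strictly positive and non-explosive with the moment bounds collected in Lemma~\ref{ergodicity-of-CKLS}, each path of $\lambda$ attains a strictly positive minimum on the compact interval $[0,T]$, so $\int_{0}^{T}q_{s}^{2}\,ds<\infty$ almost surely and $\Lambda_{t}$ is at least a well-defined nonnegative local martingale, hence a supermartingale. To upgrade this to a true martingale I would localise: set $\tau_{m}:=\inf\{t\ge 0:\lambda_{t}\notin(1/m,\,m)\}$, observe that on $[0,\tau_{m}]$ the kernel $q$ is bounded so that $\Lambda_{t\wedge\tau_{m}}$ is a uniformly integrable martingale with $\mathbb{E}^{\mathbb{P}}[\Lambda_{T\wedge\tau_{m}}]=1$, and then show that no mass is lost in the limit $m\to\infty$, that is, $\mathbb{E}^{\mathbb{P}}[\Lambda_{T}]=1$. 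Because $\tau_{m}\uparrow\infty$ almost surely by strict positivity and non-explosiveness, this reduces to establishing uniform integrability of $\{\Lambda_{T\wedge\tau_{m}}\}_{m}$.

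The main obstacle is precisely this uniform-integrability step, because the kernel is singular at the lower boundary: expanding $\tfrac12 q_{s}^{2}=\tfrac{a^{2}}{2\sigma^{2}}\lambda_{s}^{-2k}-\tfrac{ak}{2}\lambda_{s}^{-1}+\tfrac{k^{2}\sigma^{2}}{8}\lambda_{s}^{2k-2}$ shows that the dominant term near $\lambda=0$ is $\lambda_{s}^{-2k}$ with exponent $-2k\in(-2,-1]$, so Novikov's criterion $\mathbb{E}^{\mathbb{P}}\big[\exp(\tfrac12\int_{0}^{T}q_{s}^{2}\,ds)\big]<\infty$ demands exponential control of the occupation integral $\int_{0}^{T}\lambda_{s}^{-2k}\,ds$ near the zero boundary---exactly the regime destabilised once $2\alpha<\gamma^{2}$. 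My plan is to attempt Novikov first, bounding this exponential moment by exploiting that the CKLS drift $a-b\lambda_{s}$ is strictly positive at the origin (the repulsion underlying the non-attainability of $0$ recorded in Lemma~\ref{ergodicity-of-CKLS}) together with the uniform moment estimates there; should the exponential moment prove too demanding as $k\to 1^{-}$, I would fall back on a Kazamaki-type bound or a sectioned argument, splitting $[0,T]$ at successive excursions of $\lambda$ toward the boundary and verifying the integrability condition piecewise before reassembling. Once $\mathbb{E}^{\mathbb{P}}[\Lambda_{T}]=1$ is secured, the martingale property on $[0,T]$ follows, and the Girsanov step of the first paragraph then delivers \eqref{eq:final_cir}.
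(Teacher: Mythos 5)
Your second stage is fine and coincides with the paper's construction: given that $\Lambda_{t}$ is a true martingale, substituting $dW_{t}=d\widetilde{W}_{t}-q_{t}\,dt$ into \eqref{eq:ito_P} and using $\nu(X_{t}^{(k)})q_{t}=\mu(\lambda_{t},X_{t}^{(k)})-\widetilde{\mu}(X_{t}^{(k)})$ yields \eqref{eq:final_cir}. The genuine gap is in your first stage. You propose to secure $\mathbb{E}^{\mathbb{P}}[\Lambda_{T}]=1$ via Novikov, with Kazamaki or a piecewise-in-time Novikov as fallbacks; but this is exactly the route that Section~3.2.3 (following \cite{ning2025ckls}) declares ineffective, and in the CIR subcase $k=\tfrac12$ one can see concretely that it fails. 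There the kernel \eqref{eq:girsanov_kernel} reduces to $q_{t}=\tfrac{4a-\sigma^{2}}{4\sigma}\lambda_{t}^{-1/2}$, so Novikov demands $\mathbb{E}^{\mathbb{P}}\big[\exp\big\{\tfrac{(4a-\sigma^{2})^{2}}{32\sigma^{2}}\int_{0}^{T}\lambda_{s}^{-1}ds\big\}\big]<\infty$, whereas the squared-Bessel representation (Lemma~\ref{ergodicity-of-CIR}'s (\romannumeral4)) shows that the exponential moment $\mathbb{E}^{\mathbb{P}}[\exp\{c\int_{0}^{T}\lambda_{s}^{-1}ds\}]$ is finite only for $c\leq\tfrac{(2a-\sigma^{2})^{2}}{8\sigma^{2}}=\tfrac{(4a-2\sigma^{2})^{2}}{32\sigma^{2}}$. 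Since $2a\geq\sigma^{2}$ forces $4a-\sigma^{2}>4a-2\sigma^{2}\geq0$, the Novikov coefficient strictly exceeds the critical threshold, so the required exponential moment is $+\infty$ for \emph{every} $T>0$ and all admissible parameters. Crucially, this threshold does not improve as the time interval shrinks or with the starting point --- the singularity is spatial (at the boundary $0$), not temporal --- so your "sectioned" fallback of applying Novikov on short subintervals and reassembling cannot repair the argument; Kazamaki is likewise an exponential-moment test and fails for the same structural reason. Your localization reduction (stopping at $\tau_{m}$ and seeking uniform integrability of $\{\Lambda_{T\wedge\tau_{m}}\}_{m}$) is correct as far as it goes, but it merely restates the problem: the no-mass-loss step is exactly the martingale property, and you supply no tool that can close it.

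The missing idea is the one the paper attributes to \cite{ning2025ckls}: the necessary-and-sufficient martingale criterion of \cite{mijatovic2012martingale}, which characterizes when a Dol\'{e}ans--Dade exponential is a true martingale through Feller's test of explosions for the auxiliary diffusion under the candidate measure --- a deterministic scale-function/speed-measure computation that bypasses exponential moments entirely. This route also explains a hypothesis issue your write-up glosses over: applying the criterion requires the Engelbert--Schmidt local-integrability condition, which is what forces the exclusion of $k=\tfrac12$ with $2a<\sigma^{2}$ in \eqref{prop:conditions2}; the theorem's bare range $\tfrac12\leq k<1$ does not by itself keep $q_{t}$ integrable along paths that hit zero, and your assertion of a strictly positive pathwise minimum of $\lambda$ on $[0,T]$ already tacitly invokes that restriction from Assumption~\ref{ass1}.
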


\begin{table}[!ht]
\caption{Parameter correspondence between the CKLS and resulting CIR models}
\centering
\begin{tabular}{lccc}
\toprule
\textbf{CIR Parameter} & \textbf{Description} & \textbf{CKLS Parameter} & \textbf{Condition} \\ \hline
$\alpha$ & Level & $\frac{\sigma^{2} L^{2}}{4}$ & $\sigma>0$, $L>0$\\
$\beta$ & Mean-reversion speed & $2b(1-k)$ & $b>0, \frac{1}{2}\leq k<1$ \\
$\gamma$ & Volatility & $\sigma L$ & $\sigma>0, L>0$ \\ 
\bottomrule
\end{tabular}
\label{tab:param_map}
\end{table}

\begin{remark}[CEV special case, V\'a\v{s}\'i\v{c}ek extension and scaling parameter $L$]
\noindent \begin{enumerate}
    \item The three parameters of the resulting CIR-type model do not depend on the parameter $a$ of the CKLS model. Consequently, the transformation also applies to the CEV (constant elasticity of variance) model with $a$ set to $0$, which, like the CIR model, can be represented as a time-changed squared Bessel process.
    \item If the CIR coefficients satisfy $4\alpha=\gamma^{2}$, then the square-root transform $Y_{t}:=\sqrt{X_{t}}$ solves the V\'a\v{s}\'i\v{c}ek model $dY_{t}=-\frac{\beta}{2}Y_{t}dt+\frac{\gamma}{2}dW_{t}$, so that $Y_{t}= e^{-\frac{\beta}{2}t}Y_{0}+\frac{\gamma}{2}\int_{0}^{t}e^{-\frac{\beta}{2}(t-s)}dW_{s}$, i.e., $Y_{t}$ is a mean-reverting Gaussian process and is also called Ornstein--Uhlenbeck process. Notably, our transformed CIR-type SDE satisfies this condition. Since the further implications of this property are not directly relevant to the present paper, we do not elaborate on them here; see \cite{ning2025ckls} for details.
    \item The parameter $L=\gamma/\sigma>0$, introduced during the construction of the mapping $\mathcal{T}$, serves as a free scaling parameter. As shown in Table~\ref{tab:param_map}, the mean-reversion speed $\beta$ is independent of $L$, while both volatility $\gamma$ and level $\alpha$ are directly determined by $L$. This provides a valuable degree of freedom in model calibration: one can match the mean-reversion speed $\beta$ via the choice of $k$, while selecting $L$ to scale the volatility $\gamma$ and level $\alpha$ of the mapped process to desired values.
\end{enumerate}
\end{remark}

\subsubsection{On the Condition for Proving Martingale Property}
Theorem~\ref{thm:main} requires (\ref{Dol\'eans-Dade exponential}) to be a (ture) martingale in order to define a valid equivalent measure $\mathbb{Q}$. Classical criteria such as Novikov's and Kazamaki's would require integrability of exponential functionals involving negative powers of the CKLS process—conditions difficult to verify because they requires precise moment bounds and tail estimates for $\lambda_{t}$ over $t\in[0,+\infty)$. As discussed in \cite{ning2025ckls}, these tests are ineffective in this setting. Instead, using the martingale criterion proposed by \cite{mijatovic2012martingale}---a sufficient and necessary condition that bypasses Novikov--Kazamaki-type sufficient conditions, validated via Feller's boundary classification and the Feller's test of explosion originally by \cite{feller1952parabolic}---they prove that (\ref{Dol\'eans-Dade exponential}) is indeed a (true) martingale.

To apply the martingale criterion of \cite{mijatovic2012martingale}, $k$-regimes in which the CKLS process is not strictly positive must be excluded (known as Engelbert-Schmidt condition, see \cite{mijatovic2012martingale}). As a result, in addition to the restriction $\frac{1}{2}\leq k<1$, the case $k=\frac{1}{2}$ with $2a<\sigma^{2}$ should also be ruled out. Consequently, from this point onward, we impose the following parameter condition on the CKLS model:
\begin{equation}\label{prop:conditions2}
    \frac{1}{2}<k<1 \quad \text{ or }\quad k=\frac{1}{2}\text{ with }2a\geq\sigma^{2},
\end{equation}
so that $\mathcal{T}$ (\ref{eq:T_solution}) maps from $(0,+\infty)$ to $(0,+\infty)$.

\section{Estimation of \texorpdfstring{$\beta$}{beta} in CIR Model when Feller's Condition Fails}
This CKLS--CIR transformation produces only a subclass of the CIR model: the level parameter $\alpha=\sigma^{2}L^{2}/4$ is mechanically tied to the volatility $\gamma=\sigma L$, hence cannot be determined independently. A more serious drawback concerns Feller's condition (see Lemma~\ref{ergodicity-of-CIR}'s (\romannumeral1)) for strict positivity of the mapped process $X_{t}^{(k)}$, since $4\cdot(\sigma^{2}L^{2}/4)=(\sigma L)^{2},~\text{hence}~2\alpha=\frac{\gamma^{2}}{2}<\gamma^{2}$. Hence $X_{t}^{(k)}$ can hit $0$ with positive probability, introducing nontrivial boundary effects that complicate parameter estimation.
\subsection{Literature Review on CIR Model Drift Parameter Estimation}
\subsubsection{Precursor Studies Leading to \texorpdfstring{\cite{prykhodko2025discretization}}{Prykhodko et al.}'s Approach}
\noindent \noindent For continuous-time inference in the CIR model (with the full path $\{X_{t}\}_{t\in[0,T]}$ observed), \cite{alaya2012parameter} study the MLE based on the Girsanov-type likelihood ratio. In the three regimes $\beta>0$ (ergodic/mean-reverting), $\beta=0$ (nonergodic/critical) and $\beta<0$ (nonergodic/explosive), they obtain asymptotic results separately for $\hat{\alpha}^{\text{MLE}}_{T}:=\frac{\beta T+\int_{0}^{T}X_{t}^{-1}dX_{t}}{\int_{0}^{T}X_{t}^{-1}dt}$ and $\hat{\beta}^{\text{MLE}}_{T}:=\frac{\alpha T+X_{0}-X_{T}}{\int_{0}^{T}X_{t}dt}$. The key ingredients in the proof are the CIR process's representation as a squared Bessel process, the use of explicit joint Laplace transforms for suitable path functionals, and an analysis of their large-time asymptotics combined with the uniqueness of Laplace transforms to identify the limiting laws.

Sharing the same basic principle for constructing estimators from a continuous Girsanov-type likelihood ratio, \cite{overbeck1998estimation} study the CIR process within the framework of continuous branching processes with immigration and curved exponential families. The model is treated as a semimartingale statistical experiment, and inference is developed by exploiting its exponential-family structure. In this formulation, key functionals are regarded as natural sufficient statistics and components of the information process. Asymptotic behaviors of the MLEs are then characterized through the general theory of local asymptotic (mixed) normality (rather than explicit Laplace-transform calculations).

A subsequent study \cite{alaya2013asymptotic} further develops the joint asymptotic behavior of the MLE in the three $\beta$-regimes (equivalent expression derived via applying It\^{o}'s lemma to $X_{t}\mapsto \log X_{t}$):
\begin{equation}
\hspace{-0.2cm}
\begin{aligned}
    \hat{\alpha}^{\text{mle}}_{T}&=\frac{\int_{0}^{T}X_{t}dt\cdot\int_{0}^{T}\frac{1}{X_{t}}dX_{t}-T(X_{T}-X_{0})}{\int_{0}^{T}X_{t}dt\int_{0}^{T}\frac{1}{X_{t}}dt-T^{2}}\equiv\frac{\big(\log \frac{X_{T}}{X_{0}}+\gamma\int_{0}^{T}\frac{1}{X_{t}}dt\big)\int_{0}^{T}X_{t}dt-T(X_{T}-X_{0})}{\int_{0}^{T}X_{t}dt\int_{0}^{T}\frac{1}{X_{t}}dt-T^{2}},
    \\
    \hat{\beta}^{\text{mle}}_{T}&=\frac{(X_{0}-X_{T})\int_{0}^{T}\frac{1}{X_{t}}dt+T\int_{0}^{T}\frac{1}{X_{t}}dX_{t}}{\int_{0}^{T}X_{t}dt\int_{0}^{T}\frac{1}{X_{t}}dt-T^{2}}\equiv\frac{T\big(\log\frac{X_{T}}{X_{0}}+\gamma\int_{0}^{T}\frac{1}{X_{t}}dt\big)-(X_{T}-X_{0})\int_{0}^{T}\frac{1}{X_{t}}dt}{\int_{0}^{T}X_{t}dt \int_{0}^{T}\frac{1}{X_{t}}dt-T^{2}}.\label{AlayaDriftEstimators}
\end{aligned}
\end{equation}
Under the ergodic regime $\beta>0$, $(\hat{\alpha}^{\mathrm{mle}}_{T}, \hat{\beta}^{\mathrm{mle}}_{T})$ is $p$-consistent and asymptotically normal. The authors also discretize the first expression of \eqref{AlayaDriftEstimators} via the discretization scheme $dt\approx\Delta$, and demonstrate that the discretization error is $o_{p}(1)$. Most recently, \cite{chernova2024rate} revisits this problem by discretizing the second expression of \eqref{AlayaDriftEstimators} via the discretization scheme $dX_{t}\approx X_{t_{j+1}}-X_{t_{j}}$ and $dt\approx\Delta$, and demonstrates that the discretization error is $o_{p}(1)$.

All four studies above impose Feller's condition $2\alpha\geq\gamma^{2}$, which ensures
that the CIR process remains strictly positive and that $X_{t}^{-1}$ is integrable. In particular, $\mathbb{P}(\int_{0}^{T}X_{t}^{-1}dt<\infty)=\mathbb{P}(\int_{0}^{T}X_{t}^{-1}dX_{t}<\infty)=1$, so the continuous-time expressions (and its Riemann-sum approximations based on discrete observations) is well defined.

Instead of using an MLE approach, \cite{dehtiar2022two} propose a new continuous-time parametric estimator that exploits the positive Harris recurrence of the CIR process, and they establish its strong consistency. 
\cite{prykhodko2025discretization} further develops these results: in a high-frequency setting, they prove asymptotic normality of the continuous-time statistic, propose a discrete counterpart, and establish strong consistency and asymptotic normality for the discrete statistic as well. Their method addresses the limitation of conventional MLE approaches that are not applicable when $2\alpha<\gamma^{2}$. As a result, it is perfectly suited to the scenario considered in our study.

\subsubsection{Some Other Noteworthy Literature}
With equally spaced discrete observations in the ergodic case, \cite{overbeck1997estimation} define the (weighted) conditional least squares estimators (CLSE) for the drift by using the linear relation by treating the discrete-time skeleton of the CIR process as a linear AR(1) regression of $X_{j}$ on $X_{j-1}$. Within a martingale estimating-function framework they establish strong consistency and asymptotic normality of these discrete-time estimators, and also derive a continuous-time least squares estimator from the full trajectory as a benchmark for the loss of information under discrete sampling. Although they do not impose Feller's condition, a main drawback of the CLS
approach is that it exploits only the first-moment regression and thus ignores much of the likelihood information in the exact transition density.

\cite{tang2009parameter} propose a pseudo-MLE. In line with the CLS approach of \cite{overbeck1997estimation}, they bypass the intractable noncentral chi-square transition density by working with a discrete-time Gaussian approximation. They adopt the pseudo-likelihood based on \cite{bergstrom1984continuous}'s Euler--Maruyama-type discretization, which yields a Gaussian quasi-transition density that matches both the conditional mean and the conditional variance (second-moment regression). Their approach, too, hinges on Feller's condition to ensure $X_{t}$'s strict positivity ($\sum$ denotes $\sum_{j=1}^{n}$): 
\begin{equation}\label{pseudoMLE}
    \hspace{-0.1cm} \hat{\alpha}_{n}^{\text{pd}}:=\hat{\beta}_{n}^{\text{pd}}\chi,~\hat{\beta}_{n}^{\text{pd}}:=\frac{\log \phi}{-\Delta},\chi=\frac{\frac{1}{n}\sum X_{t_{j}}X_{t_{j-1}}^{-1}-\phi}{(1-\phi)\frac{1}{n}\sum X_{t_{j-1}}^{-1}},\phi=\frac{\frac{1}{n^{2}}\sum X_{t_{j}}\sum X_{t_{j-1}}^{-1}-\frac{1}{n}\sum X_{t_{j}}X_{t_{j-1}}^{-1}}{\frac{1}{n^{2}}\sum X_{t_{j-1}}\sum X_{t_{j-1}}^{-1}-1}.
\end{equation}

Under a high-frequency sampling scheme, \cite{cheng2024estimation} also replaces the exact noncentral chi-square transition density by a Gaussian
approximation via the Euler--Maruyama discretization and constructs a quasi-Gaussian likelihood together with the associated GQMLE. Although this likelihood has a simple analytic form, the GQMLE yields no closed-form solution. Starting from $p$-consistent initial estimators, they apply a one-step procedure to obtain $(\hat{\alpha}_{n},\hat{\beta}_{n},\hat{\gamma}_{n})$. After verifying the usual LAN conditions for the log-likelihood, LeCam's LAN theory yields asymptotic normality of these one-step estimators. The initial drift estimators $(\hat{\alpha}_{0,n},\hat{\beta}_{0,n})$ are chosen to be the CLSE of \cite{overbeck1997estimation}. Plugging them into the likelihood yields $\hat{\gamma}_{0,n}$. A standing assumption is the finiteness of $\mathbb{E}[X_{t}^{-1}]=\frac{2\beta_{0}}{2\alpha_{0}-\gamma_{0}^{2}}$, which enforces the stricter Feller condition $2\alpha>\gamma^{2}$. This condition is used repeatedly to ensure that $X_{t_{j}}^{-1}$ and $X_{t_{j}}^{-2}$ have finite moments and empirical averages of order $O_{p}(1)$; at one point an even stronger requirement $2\alpha>5\gamma^{2}$ is imposed to obtain sufficiently high-order (up to 5th) negative moments and the corresponding ergodic and tightness bounds.

\subsection{Prykhodko--Ralchenko's Estimator of \texorpdfstring{$\beta$}{beta}: A Brief Introduction}
\begin{theorem}\label{beta-theorem}[\cite{prykhodko2025discretization}, \lbrack Theorem 2.2\rbrack]\\
Assume a known diffusion coefficient $\gamma$. In the ultra-high-frequency setting, the estimator of the CIR drift parameters $(\alpha,\beta)$ based on observations $\{X_{i\Delta}\}_{i=0}^{n}$ are defined as follows:
\begin{equation}\label{beta-form}
    \hat{\alpha}_{n}:=\frac{\gamma^{2}}{2}\frac{(\sum_{i=1}^{n}X_{(i-1)\Delta})^{2}}{n\sum_{i=1}^{n}X_{(i-1)\Delta}^{2}-(\sum_{i=1}^{n}X_{(i-1)\Delta})^{2}},\quad
    \hat{\beta}_{n}:=\frac{\gamma^{2}}{2}\frac{n\sum_{i=1}^{n}X_{(i-1)\Delta}}{n\sum_{i=1}^{n}X_{(i-1)\Delta}^{2}-(\sum_{i=1}^{n}X_{(i-1)\Delta})^{2}}.
\end{equation}
Denote by $(\alpha_{0},\beta_{0})$ the true values of the drift parameters. In the ultra-high-frequency setting $\Delta\to0$, $n\Delta\to\infty$ and $n\Delta^{2}\to0$, the estimator $(\hat{\alpha}_{n},\hat{\beta}_{n})$ is strong consistent. Furthermore, in the same setting, the estimator is asymptotically normal as $n\to\infty$:
\begin{equation}\nonumber
    \sqrt{n\Delta}(\hat{\alpha}_{n}-\alpha_{0},\hat{\beta}_{n}-\beta_{0})\xrightarrow{d}\mathcal{N}(\textbf{0}_{2},\Sigma)~
\text{ with }\Sigma:=
\begin{pmatrix}
    \frac{\alpha_{0}}{\beta_{0}}(2\alpha_{0}+\gamma^{2}) & 2\alpha_{0}+\gamma^{2} \\
    2\alpha_{0}+\gamma^{2} & \frac{2\beta_{0}}{\alpha_{0}}(\alpha_{0}+\gamma^{2})
    \end{pmatrix}.
\end{equation}
\end{theorem}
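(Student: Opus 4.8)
\noindent\textit{Proof proposal.}
The plan is to treat $(\hat\alpha_n,\hat\beta_n)$ as a smooth function of the empirical first and second moments
\[
M_{1,n}:=\frac1n\sum_{i=1}^{n}X_{(i-1)\Delta},\qquad M_{2,n}:=\frac1n\sum_{i=1}^{n}X_{(i-1)\Delta}^{2},
\]
thereby reducing the whole statement to an ergodic law of large numbers and a central limit theorem for this pair. Dividing numerator and denominator of \eqref{beta-form} by $n^{2}$ and writing $D_{n}:=M_{2,n}-M_{1,n}^{2}$ for the empirical variance, one checks at once that
\[
\hat\alpha_n=\frac{\gamma^{2}}{2}\,\frac{M_{1,n}^{2}}{D_{n}},\qquad \hat\beta_n=\frac{\gamma^{2}}{2}\,\frac{M_{1,n}}{D_{n}},
\]
so that $(\hat\alpha_n,\hat\beta_n)=\Phi(M_{1,n},M_{2,n})$ for the map $\Phi(u,v):=\tfrac{\gamma^{2}}{2}\bigl(u^{2}/(v-u^{2}),\,u/(v-u^{2})\bigr)$, which is smooth on $\{v>u^{2}\}$.

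I would first prove strong consistency. Since $\beta_{0}>0$, the CIR diffusion is positively Harris recurrent with an invariant Gamma law all of whose positive moments are finite; crucially this holds even when Feller's condition fails, and the estimator involves only these positive moments (never $X^{-1}$). From the Gamma law one reads off $\mathbb{E}_{\pi}[X]=\alpha_{0}/\beta_{0}=:m_{1}$, $\mathbb{E}_{\pi}[X^{2}]=\alpha_{0}(2\alpha_{0}+\gamma^{2})/(2\beta_{0}^{2})=:m_{2}$, and $m_{2}-m_{1}^{2}=\alpha_{0}\gamma^{2}/(2\beta_{0}^{2})>0$. The Harris ergodic theorem gives $\tfrac1T\int_{0}^{T}X_{t}\,dt\to m_{1}$ and $\tfrac1T\int_{0}^{T}X_{t}^{2}\,dt\to m_{2}$ almost surely with $T=n\Delta$; replacing these time integrals by the Riemann sums $M_{1,n},M_{2,n}$ incurs a gap whose size (controlled by the increment moment bounds below) vanishes, so $M_{j,n}\to m_{j}$ a.s. As $(m_{1},m_{2})\in\{v>u^{2}\}$, the continuous-mapping theorem then yields $\Phi(M_{1,n},M_{2,n})\to\Phi(m_{1},m_{2})=(\alpha_{0},\beta_{0})$ almost surely.

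For asymptotic normality I would establish the joint CLT
\[
\sqrt{n\Delta}\,\bigl(M_{1,n}-m_{1},\,M_{2,n}-m_{2}\bigr)\xrightarrow{d}\mathcal{N}(\mathbf{0}_{2},V)
\]
and then apply the delta method with $J:=D\Phi(m_{1},m_{2})$ to obtain $\Sigma=JVJ^{\top}$. The CLT splits into a discretization step and a continuous step. For the discretization, put $f_{1}(x)=x$, $f_{2}(x)=x^{2}$ and expand $f_{j}(X_{t})-f_{j}(X_{(i-1)\Delta})$ by It\^{o} on each $[(i-1)\Delta,i\Delta]$; the finite-variation (drift) part of the resulting Riemann-sum error is $O_{p}(n\Delta^{2})$ and the martingale part is $O_{p}(\sqrt{n\Delta^{3}})$, using only finiteness of the relevant $\pi$-moments. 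Dividing by $\sqrt{n\Delta}$ leaves $O_{p}(\sqrt{n\Delta^{3}})+O_{p}(\Delta)=o_{p}(1)$ precisely under the ultra-high-frequency conditions $\Delta\to0$, $n\Delta^{2}\to0$. For the continuous step I would solve the Poisson equations $\mathcal{L}g_{j}=-(f_{j}-m_{j})$ for the CIR generator $\mathcal{L}=(\alpha_{0}-\beta_{0}x)\partial_{x}+\tfrac12\gamma^{2}x\,\partial_{xx}$; matching coefficients gives the linear $g_{1}(x)=x/\beta_{0}$ and a quadratic $g_{2}$, whence It\^{o} yields $\tfrac1{\sqrt{T}}\int_{0}^{T}(f_{j}-m_{j})\,dt=\tfrac1{\sqrt{T}}\int_{0}^{T}g_{j}'(X_{t})\gamma\sqrt{X_{t}}\,dW_{t}+o_{p}(1)$. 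The multivariate martingale CLT then delivers the Gaussian limit with $V_{jk}=\gamma^{2}\,\mathbb{E}_{\pi}[g_{j}'(X)g_{k}'(X)X]$, which is evaluated in closed form from the Gamma moments $m_{1},m_{2},m_{3}$.

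Finally I would compute the Jacobian $J$ of $\Phi$ at $(m_{1},m_{2})$ explicitly and verify that $JVJ^{\top}$ reproduces the stated matrix $\Sigma$ (a direct check confirms, e.g., that the $(2,2)$ entry collapses to $2\beta_{0}(\alpha_{0}+\gamma^{2})/\alpha_{0}$). I expect the main obstacle to be the discretization step: controlling the accumulated Riemann-sum error at the exact $\sqrt{n\Delta}$-rate while relying only on positive (not reciprocal) moments of $X$, since Feller's condition is allowed to fail. This requires uniform-in-$i$ moment bounds on the increments together with careful bookkeeping of the error over the growing number of subintervals, and it is exactly here that $n\Delta^{2}\to0$ is indispensable. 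Once the discretization error is shown negligible, the Poisson-equation representation, the evaluation of $V$, and the matrix product $\Sigma=JVJ^{\top}$ are routine, if lengthy, computations.
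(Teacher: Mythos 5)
Your proposal is correct, and its mathematical core coincides with the paper's: both arguments reduce the problem to the same two stochastic integrals $\int_{0}^{T}X_{t}^{1/2}dW_{t}$ and $\int_{0}^{T}X_{t}^{3/2}dW_{t}$, the Gamma moments $m_{1},m_{2},m_{3}$, and a multivariate martingale CLT. Indeed, your Poisson-equation solutions $g_{1}(x)=x/\beta_{0}$ and $g_{2}(x)=\frac{x^{2}}{2\beta_{0}}+\frac{2\alpha_{0}+\gamma^{2}}{2\beta_{0}^{2}}x$ are exactly what the paper obtains by integrating the SDE and applying It\^{o} to $x\mapsto x^{2}$ (Lemmas~\ref{ergod-law-first-moment} and~\ref{ergod-law-second-moment}); your centered decompositions of $M_{1,n}$ and $M_{2,n}$ are those lemmas restated. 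The packaging differs in three genuine ways. First, you organize the algebra as a delta method for $\Phi(M_{1,n},M_{2,n})$, whereas the paper expands $\sqrt{T}(\tilde{\beta}_{n}-\beta_{0})$ by hand into $\frac{\gamma}{D_{T}}(\alpha/\beta_{0},1)\,T^{-1/2}M_{T}+R(T)$ and applies Slutsky; the two linearizations agree coefficient-by-coefficient (one can check the weights on $T^{-1/2}\int X_{t}^{1/2}dW_{t}$ and $T^{-1/2}\int X_{t}^{3/2}dW_{t}$ match, using $m_{2}-m_{1}^{2}=\alpha_{0}\gamma^{2}/(2\beta_{0}^{2})$), and your version buys the full $2\times2$ matrix $\Sigma$ including the $\hat{\alpha}_{n}$ entries, which the paper's proof explicitly omits as "analogous" and treats only the $(2,2)$ entry. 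Second, your discretization control via It\^{o} expansion on each subinterval (drift part $O_{p}(n\Delta^{2})$, martingale part $O_{p}(\sqrt{n\Delta^{3}})$, hence $O_{p}(\sqrt{n\Delta^{3}})+O_{p}(\Delta)=o_{p}(1)$ after $\sqrt{n\Delta}$-normalization) is a different, and slightly sharper, lemma than the paper's route through uniform $1/2$-H\"{o}lder continuity in $\mathcal{L}^{q}$ (Lemma~\ref{ergodicity-of-CIR}'s (\romannumeral6), Proposition~\ref{lem:r-sq-increment}, Lemmas~\ref{lem:ergodic-sum-first-order}--\ref{discrete-continuous-error}), which yields the normalized error $O_{p}(\sqrt{n\Delta^{2}})$; your bound would in fact survive under the weaker condition $n\Delta^{3}\to0$. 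Third, one point you should make explicit: for the claimed \emph{strong} consistency, your $O_{p}$ control of the Riemann-sum gap only delivers consistency in probability; to conclude $M_{j,n}\to m_{j}$ almost surely you need the Borel--Cantelli upgrade the paper carries out—$\mathcal{L}^{q}$ bounds of arbitrarily high order on the gap, Markov's inequality, and summability of $(n\Delta)^{-\omega q/2}$ (Lemmas~\ref{lem:ergodic-sum-first-order}--\ref{lem:ergodic-sum-second-order})—which your moment estimates do support but which is currently only gestured at. With that one step spelled out, the proposal is a complete and, in the $\hat{\alpha}_{n}$-direction, more comprehensive proof than the one given in the paper.
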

\begin{proof}
In our scenario we are primarily concerned with estimating mean-reversion speed $\beta$. To keep the presentation focused, we omit the construction and proofs for $\alpha$, which proceed in an analogous way, and henceforth treat $\alpha$ as known with value $\alpha_{0}$. Tool Lemmas used are found in Appendix~\ref{bbb}.\\[0.5\baselineskip]
\noindent \textbf{(1) Strong consistency:} This is a combination of continuous mapping theorem and Lemma~\ref{ergod-law-first-moment}, Lemma~\ref{ergod-law-second-moment} and Lemma~\ref{discrete-continuous-error}'s (\romannumeral1), (\romannumeral2), (\romannumeral3). Specifically, as $n\to\infty$:
\begin{equation}\nonumber
    \hat{\beta}_{n}=\frac{\gamma^{2}}{2}\frac{\frac{1}{n\Delta}\sum_{i=1}^{n}X_{(i-1)\Delta}\Delta}{\frac{1}{n\Delta}\sum_{i=1}^{n}X_{(i-1)\Delta}^{2}\Delta-(\frac{1}{n\Delta}\sum_{i=1}^{n}X_{(i-1)\Delta}\Delta)^{2}}\xrightarrow{a.s.}\frac{\gamma^{2}}{2}\frac{\frac{\alpha}{\beta_{0}}}{\frac{\alpha^{2}}{\beta_{0}^{2}}+\frac{\alpha\gamma^{2}}{2\beta_{0}^{2}}-(\frac{\alpha}{\beta_{0}})^{2}}=\beta_{0}.
\end{equation}
When applying the continuous mapping theorem, note the denominator is the empirical variance $\hat{D}_{n}:=\frac{1}{n}\sum_{i=1}^{n}X_{(i-1)\Delta}^{2}-\big(\frac{1}{n}\sum_{i=1}^{n}X_{(i-1)\Delta}\big)^{2}$, which is non-negative by the Cauchy--Schwarz inequality. The denominator vanishes only when the event that all observations coincide happens. For a nondegenerate diffusion such a event has probability zero, so the denominator is strictly positive almost surely. In line with \cite{prykhodko2025discretization} and \cite{dehtiar2022two}, we do not consider the pathological case $\hat{D}_{n}=0$.\\[0.5\baselineskip]
\noindent \textbf{(2) Asymptotic normality:} First, we define $D_{T}$ and $\hat{D}_{n}$ as follows and obtain the respective convergence from Lemma~\ref{ergod-law-first-moment}, Lemma~\ref{ergod-law-second-moment} and the previous proof of strong convergence:
\begin{align}\nonumber
    D_{T}&:=\frac{1}{T}\int_{0}^{T}X_{t}^{2}dt-\Big(\frac{1}{T}\int_{0}^{T}X_{t}dt\Big)^{2}\xrightarrow[T\to\infty]{a.s.}\frac{\alpha(\alpha+\frac{\gamma^{2}}{2})}{\beta_{0}^{2}}-\frac{\alpha^{2}}{\beta_{0}^{2}}=\frac{\alpha\gamma^{2}}{2\beta_{0}^{2}}=:c=\text{Var}_{\pi_{0}}(X_{t}),\\
    \hat{D}_{n}&
    =\frac{1}{n\Delta}\sum_{i=1}^{n}X_{(i-1)\Delta}^{2}\Delta-\Big(\frac{1}{n\Delta}\sum_{i=1}^{n}X_{(i-1)\Delta}\Delta\Big)^{2}=\underbrace{(\hat{D}_{n}-D_{T})}_{\to0~a.s.}+\underbrace{D_{T}}_{\to c~ a.s.}\xrightarrow{a.s.}\frac{\alpha\gamma^{2}}{2\beta_{0}^{2}}.\nonumber
\end{align}
\noindent As a result, we have:
\begin{equation}
\begin{aligned}\nonumber
    &\sqrt{n\Delta}(\hat{\beta}_{n}-\beta_{0})=\sqrt{n\Delta}\Big[\frac{\frac{\gamma^{2}}{2}\frac{1}{n\Delta}\sum_{i=1}^{n}X_{(i-1)\Delta}\Delta}{\hat{D}_{n}}-\beta_{0}\Big]\\
    =&\frac{1}{\hat{D}_{n}}\Big[-\beta_{0}(n\Delta)^{-\frac{1}{2}}\sum_{i=1}^{n}X_{(i-1)\Delta}^{2}\Delta+\beta_{0}(n\Delta)^{-\frac{3}{2}}\big(\sum_{i=1}^{n}X_{(i-1)\Delta}\Delta\big)^{2}+\frac{\gamma^{2}}{2}(n\Delta)^{-\frac{1}{2}}\sum_{i=1}^{n}X_{(i-1)\Delta}\Delta\Big]\nonumber\\
    =&\frac{1}{\hat{D}_{n}}\Big[-\beta_{0}(n\Delta)^{-\frac{1}{2}}\int_{0}^{n\Delta}X_{t}^{2}dt+\beta_{0}(n\Delta)^{-\frac{3}{2}}\Big(\int_{0}^{n\Delta}X_{t}dt\Big)^{2}+\frac{\gamma^{2}}{2}(n\Delta)^{-\frac{1}{2}}\int_{0}^{n\Delta}X_{t}dt\\
    &~~~~~~~~~-\beta_{0}\xi_{2}(n)+\beta_{0}\xi_{3}(n)+\frac{\gamma^{2}}{2}\xi_{1}(n)\Big].\nonumber
\end{aligned}
\end{equation}
The definitions of $\xi_{l}$, $l=1,2,3$ can be found in Lemma~\ref{discrete-continuous-error}, whence $\xi_{l}(n)\xrightarrow{p}0$ for $l=1,2,3$. Denote the discretization error by $E(n):=-\beta_{0}\xi_{2}(n)+\beta_{0}\xi_{3}(n)+\frac{\gamma^{2}}{2}\xi_{1}(n)$, which yields $e(n):=\frac{E(n)}{\hat{D}_{n}}\xrightarrow{p}0$ as $n\to\infty$ when $n\Delta^{2}\to0$.\\[0.5\baselineskip]
\noindent Denote the continuous counterpart of $\hat{\beta}_{n}$ by
\begin{align}\nonumber
    \tilde{\beta}_{n}\equiv\tilde{\beta}_{T}:=\frac{\gamma^{2}}{2}\frac{\frac{1}{T}\int_{0}^{T}X_{t}dt}{\frac{1}{T}\int_{0}^{T}X_{t}^{2}dt-\frac{1}{T^{2}}(\int_{0}^{T}X_{t}dt)^{2}},
\end{align}
and note that, with $T\equiv n\Delta$:
\begin{align}\nonumber
    &\sqrt{T}(\tilde{\beta}_{n}-\beta_{0})=\sqrt{T}\Big(\frac{\frac{\gamma^{2}}{2}\frac{1}{T}\int_{0}^{T}X_{t}dt}{\frac{1}{T}\int_{0}^{T}X_{t}^{2}dt-(\frac{1}{T}\int_{0}^{T}X_{t}dt)^{2}}-\beta_{0}\Big)\\
    =&\frac{1}{D_{T}}\Big[\frac{\gamma^{2}}{2}T^{-\frac{1}{2}}\int_{0}^{T}X_{t}dt-\beta_{0}T^{-\frac{1}{2}}\int_{0}^{T}X_{t}^{2}dt+\beta_{0}T^{-\frac{3}{2}}(\int_{0}^{T}X_{t}dt)^{2}\Big]\nonumber\\
    =&\frac{1}{D_{T}}\Big[-\beta_{0}(n\Delta)^{-\frac{1}{2}}\int_{0}^{n\Delta}X_{t}^{2}dt+\beta_{0}(n\Delta)^{-\frac{3}{2}}\big(\int_{0}^{n\Delta}X_{t}dt\big)^{2}+\frac{\gamma^{2}}{2}(n\Delta)^{-\frac{1}{2}}\int_{0}^{n\Delta}X_{t}dt\Big]\nonumber\\
    =&\frac{\hat{D}_{n}}{D_{T}}\Big(\sqrt{n\Delta}(\hat{\beta}_{n}-\beta_{0})-e(n)\Big).\label{beta-decomp}
\end{align}
Since $\frac{\hat{D}_{n}}{D_{T}}\xrightarrow{a.s.}1$ as $n\to\infty$ and $T\to\infty$, $Q_{n}\xrightarrow{p}0$, we note that this continuous-discrete version relationship implies $\hat{\beta}_{n}-\tilde{\beta}_{n}=o_{p}(T^{-\frac{1}{2}})$. We see that it suffices to prove that the continuous counterpart of $\hat{\beta}_{n}$ is asymptotically normally distributed with the rate $\sqrt{n\Delta}$.\\[0.5\baselineskip]
\noindent A direct result from Lemma~\ref{continuous-convergence-in-Lq} is:
\begin{align}\nonumber
    &\sqrt{T}(\tilde{\beta}_{n}-\beta_{0})=\frac{1}{D_{T}}\Big[-\frac{\alpha(\alpha+\frac{\gamma^{2}}{2})}{\beta_{0}}T^{\frac{1}{2}}-\frac{\gamma(\alpha+\frac{\gamma^{2}}{2})}{\beta_{0}}T^{-\frac{1}{2}}\int_{0}^{T}X_{t}^{\frac{1}{2}}dW_{t}-\gamma T^{-\frac{1}{2}}\int_{0}^{T}X_{t}^{\frac{3}{2}}dW_{t}-\beta_{0}\rho_{2}(T)\nonumber\\
    &~~~~~~~~~~~~~~~~~~~~~~~~~~~+\frac{\alpha^{2}}{\beta_{0}}T^{\frac{1}{2}}+\frac{2\alpha\gamma}{\beta_{0}}T^{-\frac{1}{2}}\int_{0}^{T}X_{t}^{\frac{1}{2}}dW_{t}+\beta_{0}\rho_{3}(T)+\frac{\alpha\gamma^{2}}{2\beta_{0}}T^{\frac{1}{2}}+\frac{\gamma^{3}}{2\beta_{0}}T^{-\frac{1}{2}} \int_{0}^{T}X_{t}^{\frac{1}{2}}dW_{t}+\frac{\gamma^{2}}{2}\rho_{1}(T)\Big]\nonumber\\
    =&\frac{1}{D_{T}}\Big[\frac{\alpha\gamma}{\beta_{0}}T^{-\frac{1}{2}}\int_{0}^{T}X_{t}^{\frac{1}{2}}dW_{t}-\gamma T^{-\frac{1}{2}}\int_{0}^{T}X_{t}^{\frac{3}{2}}dW_{t}+\frac{\gamma^{2}}{2}\rho_{1}(T)+\beta_{0}\rho_{3}(T)-\beta_{0}\rho_{2}(T)\Big].\nonumber
\end{align}
The definitions of $\rho_{l}(T)\xrightarrow[T\to\infty]{p}0$ for $l=1,2,3$ can be found in Lemma~\ref{continuous-convergence-in-Lq}, whence $\rho_{l}(T)\xrightarrow[T\to\infty]{p}0$. Denote the estimation error $R(T):=\frac{1}{D_{T}}[\frac{\gamma^{2}}{2}\rho_{1}(T)+\beta_{0}\rho_{3}(T)-\beta_{0}\rho_{2}(T)]$, which yields $R(T)\xrightarrow[T\to\infty]{p}0$.\\[0.5\baselineskip]
\noindent As a result, we have the following representation:
\begin{equation}\label{beta-estimator}
\sqrt{T}(\tilde{\beta}_{n}-\beta_{0})
    =\frac{\gamma}{D_{T}}\Big(\frac{\alpha}{\beta_{0}},1\Big)
    \Big(\frac{1}{\sqrt{T}}\int_{0}^{T}X_{t}^{\frac{1}{2}}dW_{t},
    -\frac{1}{\sqrt{T}} \int_{0}^{T}X_{t}^{\frac{3}{2}}dW_{t}\Big)^{\intercal}+R(T).
\end{equation}
\noindent Note that the vector process 
$M_{T}:=\big(\int_{0}^{T}X_{t}^{\frac{1}{2}}dW_{t},
-\int_{0}^{T}X_{t}^{\frac{3}{2}}dW_{t}\big)^{\intercal}$ is a 2-dimensional Brownian motion (martingale) with the quadratic variation matrix
\begin{equation}\nonumber
    [M]_{T}=\int_{0}^{T}
    \begin{pmatrix}
    X_{t} & -X_{t}^{2}\\
    -X_{t}^{2} & X_{t}^{3}
    \end{pmatrix}dt=
    \begin{pmatrix}
    \int_{0}^{T}X_{t}dt & -\int_{0}^{T}X_{t}^{2}dt\\
    -\int_{0}^{T}X_{t}^{2}dt & \int_{0}^{T}X_{t}^{3}dt
    \end{pmatrix}.
\end{equation}
By Lemma~\ref{ergodicity-of-CIR}'s (\romannumeral3), we have for $q=1,2,3$:
\begin{equation}\nonumber
	\frac{1}{T}\int_{0}^{T}X_{t}^{q}dt\xrightarrow[T\to\infty]{a.s./\mathcal{L}^{1}}m_{q},\quad m_{1}:=\frac{\alpha}{\beta},~m_{2}:=\frac{\alpha(\alpha+\frac{\gamma^{2}}{2})}{\beta^{2}},~m_{3}:=\frac{\alpha(\alpha+\gamma^{2})(\alpha+\frac{\gamma^{2}}{2})}{\beta^{3}}.
\end{equation}
\noindent By the central limit theorem for multidimensional martingales, we obtain
\begin{equation}\nonumber
    \frac{1}{T}[M]_{T}\xrightarrow[T\to\infty]{p} \Gamma=
\begin{pmatrix}
    \frac{\alpha}{\beta_{0}} & -\frac{\alpha}{\beta_{0}^{2}}(\alpha+\frac{\gamma^{2}}{2}) \\
    -\frac{\alpha}{\beta_{0}^{2}}(\alpha+\frac{\gamma^{2}}{2}) & \frac{\alpha}{\beta_{0}^{3}}(\alpha+\gamma^{2})(\alpha+\frac{\gamma^{2}}{2})
\end{pmatrix}
~\Longrightarrow~
\frac{1}{\sqrt{T}}M_{T}\xrightarrow[T\to\infty]{d} \mathcal{N}(\textbf{0}_{2},\Gamma).
\end{equation}
For a precise statement of such CLT (for multidimensional martingales) with a discussion on how to apply it to the estimator $\tilde{\beta}_{n}$, see Theorem~\ref{theorem-heyde} in the next section.\\[0.5\baselineskip]
\noindent Taking into account the almost-sure convergence of $D_{T}$, that is, $D_{T}\xrightarrow[T\to\infty]{a.s.}\frac{\alpha\gamma^{2}}{2\beta_{0}^{2}}$ and Slutsky's theorem, we derive that $\sqrt{T}(\tilde{\beta}_{n}-\beta_{0})$ converges in law to the normal distribution $\mathcal{N}(0,\Sigma_{22})$ with
\begin{align}\nonumber
\Sigma_{22}&=
\Big(\gamma\frac{2\beta_{0}^{2}}{\alpha\gamma^{2}}\Big)^{2}\Big(\frac{\alpha}{\beta_{0}},1\Big)
\begin{pmatrix}
\frac{\alpha}{\beta_{0}} & -\frac{\alpha}{\beta_{0}^{2}}(\alpha+\frac{\gamma^{2}}{2}) \\
-\frac{\alpha}{\beta_{0}^{2}}(\alpha+\frac{\gamma^{2}}{2}) & \frac{\alpha}{\beta_{0}^{3}}(\alpha+\gamma^{2})(\alpha+\frac{\gamma^{2}}{2})
\end{pmatrix}
\Big(\frac{\alpha}{\beta_{0}},1\Big)=
\frac{2\beta_{0}}{\alpha} (\alpha+\gamma^{2}).\tag*{\qed}
\end{align}
\renewcommand{\qed}{}
\end{proof}
\begin{remark}[Efficiency comparisons of different estimators]
As discussed in \cite{prykhodko2025discretization}, one may compare the efficiency of $(\hat{\alpha}_{n},\hat{\beta}_{n})$ with those of the discrete-time MLE \eqref{AlayaDriftEstimators} proposed by \cite{alaya2013asymptotic}, of the discrete-time pseudo-MLE \eqref{pseudoMLE} proposed by \cite{tang2009parameter} and of the discrete-time QMLE proposed by \cite{cheng2024estimation}. All three MLE-type estimators are asymptotically normal and require Feller's condition 
$2\alpha\geq\gamma^{2}$. Their asymptotic covariance matrices are:
\[
\Sigma_{\mathrm{mle}}=\Sigma_{\mathrm{qmle}}=
\begin{pmatrix}
\dfrac{\alpha}{\beta_{0}}(2\alpha-\gamma^{2}) & 2\alpha-\gamma^{2}\\[0.6em]
2\alpha-\gamma^{2} & 2\beta_{0}
\end{pmatrix},
\quad
\Sigma_{\mathrm{pd}}=
\begin{pmatrix}
\dfrac{\alpha}{\beta_{0}}(2\alpha+4\beta_{0}+\gamma^{2}) & 2\alpha+2\beta_{0}\\[0.6em]
2\alpha+2\beta_{0} & 2\beta_{0}
\end{pmatrix},
\]
For the estimation of $\beta$, a comparison of $\Sigma$ respectively with $\Sigma_{\mathrm{mle}}=\Sigma_{\mathrm{qmle}}$ and $\Sigma_{\mathrm{pd}}$ shows that MLE, QMLE and pseudo-MLE are always more efficient than $\hat{\beta}_{n}$ whenever they are well-defined.
\end{remark}

\section{Stable Convergence in Law for the Estimator \texorpdfstring{$\hat{\beta}_{n}$}{beta-hat-n}}
In this section, we extend the results of \cite{prykhodko2025discretization} by strengthening the convergence of the discrete-time estimator $\hat{\beta}_{n}$ from convergence in law to stable convergence in law. This upgrade enables the use of Theorem~\ref{preservation of convergence}, which ensures that stable convergence with a normal limit is invariant under equivalent measure changes when the Radon-Nikod\'{y}m derivative is $\mathcal{F}_{T}$-measurable. Consequently, we can identify the asymptotic distribution of $\hat{\beta}_{n}$ under the new measure.

\subsection{Verifying Stable Convergence in Law for Estimator \texorpdfstring{$\hat{\beta}_{n}$}{beta-hat-n} via its Continuous-time Counterpart \texorpdfstring{$\tilde{\beta}_{n}$}{beta-tilde-n}}
We now investigate whether the convergence, as $n\to\infty$,
\begin{equation}\nonumber
    \sqrt{n\Delta}(\hat{\beta}_{n}-\beta_{0})\xrightarrow{d}\mathcal{N}\Big(0,\frac{2\beta_{0}}{\alpha}(\alpha+\gamma^{2})\Big).
\end{equation}
can be strengthened from convergence in law to stable convergence in law. Although the multidimensional martingale CLT de facto yields stable convergence in law, \cite{prykhodko2025discretization} did not discuss this refinement, as their analysis requires only ordinary convergence in law.

\begin{theorem}\label{theorem-heyde}[\cite{heyde1997quasi}, \lbrack Theorem 12.6\rbrack]\\
Let $S_{T}:=(S_{1T},\ldots,S_{dT})^{\intercal}$ with respect to $\mathcal{F}_{T}$ be a $d$-dimensional continuous martingale with quadratic variation matrix $[S]_{T}$. Suppose there exists a non-random vector function $\kappa_{T}=(\kappa_{1T},\ldots,\kappa_{dT})$ with $\kappa_{iT}>0$ increasing to infinity as $T\to\infty$ for $j=1,2,\ldots,d$ such that as $T\to\infty$:
\begin{enumerate}[(i)]
    \item (Lindeberg-type condition): $
	\kappa_{jT}^{-1}\sup_{0\leq s\leq T}\big\lvert S_{js}-S_{js-}\big\lvert \xrightarrow{p}0,\quad j=1,2,\ldots,d$.
\item (Quadratic variation convergence): $K_{T}^{-1}[S]_{T}K_{T}^{-1}\xrightarrow{p}\mathcal{X}_{1}$, where $K_{T}=\mathrm{diag}(\kappa_{1T},\ldots,\kappa_{dT})$ and $\mathcal{X}_{1}$ is a random nonnegative definite matrix.
\item (Second-moment convergence): $K_{T}^{-1}\big(\mathbb{E}\big[S_{T}S_{T}^{\intercal}\big]\big)K_{T}^{-1}\to\mathcal{X}_{2}$ where $\mathcal{X}_{2}$ is a positive definite matrix.
\end{enumerate}
Then the stable convergence in law holds: $K_{T}^{-1}S_{T} \xrightarrow[T\to\infty]{d_{st}}Y$,
where $Y$ is a normal variance mixture with characteristic function $\mathbb{E}\big[\exp\{-\frac{1}{2}u^{\intercal}\mathcal{X}_{1}u\}\big],~u\in\mathbb{R}^{d}$.
\end{theorem}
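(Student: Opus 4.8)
The plan is to establish the stated stable convergence by verifying the defining criterion for $\xrightarrow{d_{st}}$, namely convergence of the conditional (stable) characteristic function: it suffices to show that for every fixed $r\geq 0$, every bounded $\mathcal{F}_{r}$-measurable $G$, and every $u\in\mathbb{R}^{d}$,
\[
   \mathbb{E}\big[G\,e^{iu^{\intercal}K_{T}^{-1}S_{T}}\big]\xrightarrow[T\to\infty]{}\mathbb{E}\big[G\,e^{-\frac{1}{2}u^{\intercal}\mathcal{X}_{1}u}\big],
\]
since $\bigcup_{r}\mathcal{F}_{r}$ generates $\mathcal{F}_{\infty}$ and this identifies the limit as the normal variance mixture with conditional covariance $\mathcal{X}_{1}$. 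First I would record that condition (iii) gives $\mathbb{E}\lVert K_{T}^{-1}S_{T}\rVert^{2}=\operatorname{tr}(K_{T}^{-1}\mathbb{E}[S_{T}S_{T}^{\intercal}]K_{T}^{-1})\to\operatorname{tr}(\mathcal{X}_{2})<\infty$, so $\{K_{T}^{-1}S_{T}\}_{T}$ is $L^{2}$-bounded and tight. The martingale isometry $\mathbb{E}[S_{T}S_{T}^{\intercal}]=\mathbb{E}[[S]_{T}]$ (taking $S_{0}=0$) then shows that (iii) is precisely the $L^{1}$-convergence of $K_{T}^{-1}[S]_{T}K_{T}^{-1}$, which together with the in-probability convergence (ii) yields uniform integrability of the normalized quadratic-variation matrices; this is what will permit interchanging limits and expectations below.

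The engine of the proof is the complex exponential (Dol\'{e}ans--Dade) local martingale
\[
   N_{t}^{(T)}(u):=\exp\Big\{iu^{\intercal}K_{T}^{-1}S_{t}+\tfrac{1}{2}u^{\intercal}\big(K_{T}^{-1}[S]_{t}K_{T}^{-1}\big)u\Big\}.
\]
When $S$ is continuous, It\^{o}'s formula shows directly that $N^{(T)}(u)$ is a local martingale; in the general c\`{a}dl\`{a}g case condition (i) is used to show the jump contribution to the exponent is asymptotically negligible, so $N^{(T)}(u)$ is asymptotically the continuous exponential above. Using the $L^{2}$ and uniform-integrability bounds from (iii) to control the localization, $N^{(T)}(u)$ is a genuine uniformly integrable martingale with $N_{0}^{(T)}(u)=1$ and $\mathbb{E}[N_{T}^{(T)}(u)\mid\mathcal{F}_{r}]=N_{r}^{(T)}(u)$. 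Since $r$ is fixed while $\kappa_{jT}\uparrow\infty$, both $K_{T}^{-1}S_{r}\to 0$ and $K_{T}^{-1}[S]_{r}K_{T}^{-1}\to 0$ almost surely, so $N_{r}^{(T)}(u)\to 1$; the modulus $|N_{r}^{(T)}(u)|=\exp\{\tfrac{1}{2}u^{\intercal}K_{T}^{-1}[S]_{r}K_{T}^{-1}u\}$ is bounded, and dominated convergence gives $\mathbb{E}[G\,N_{T}^{(T)}(u)]=\mathbb{E}[G\,N_{r}^{(T)}(u)]\to\mathbb{E}[G]$ for every bounded $G\in\mathcal{F}_{r}$.

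The final step converts this into the claimed characteristic-function limit by peeling off the quadratic-variation correction. Writing
\[
   \mathbb{E}\big[G\,e^{iu^{\intercal}K_{T}^{-1}S_{T}}\big]=\mathbb{E}\Big[G\,N_{T}^{(T)}(u)\,e^{-\frac{1}{2}u^{\intercal}K_{T}^{-1}[S]_{T}K_{T}^{-1}u}\Big],
\]
I would replace the correction factor by its limit $e^{-\frac{1}{2}u^{\intercal}\mathcal{X}_{1}u}$ using (ii), and combine with $\mathbb{E}[G\,N_{T}^{(T)}(u)]\to\mathbb{E}[G]$ to reach the target $\mathbb{E}[G\,e^{-\frac{1}{2}u^{\intercal}\mathcal{X}_{1}u}]$. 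I expect this transfer step to be the main obstacle: the correction factor is $\mathcal{F}_{T}$-measurable rather than $\mathcal{F}_{r}$-measurable, so it cannot be absorbed into the martingale identity directly, and $\mathcal{X}_{1}$ is random. The clean route is a nesting argument: approximate $\mathcal{X}_{1}$ by $\mathcal{F}_{r}$-measurable truncations, use the uniform integrability of $K_{T}^{-1}[S]_{T}K_{T}^{-1}$ from (iii) together with the in-probability convergence (ii) to bound the approximation error uniformly in $T$, and then let $T\to\infty$ followed by $r\to\infty$. Managing this double limit while keeping the complex exponential bounded and uniformly integrable is the technical heart of the argument; everything else reduces to It\^{o}'s formula and routine dominated-convergence bookkeeping.
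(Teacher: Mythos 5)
The paper does not actually prove this statement---it is imported verbatim from \cite{heyde1997quasi} and used as a black box---so your proposal must be judged on its own merits. Your skeleton is the classical one for stable martingale CLTs: test against bounded $\mathcal{F}_{r}$-measurable $G$, run the complex Dol\'{e}ans exponential $N_{t}^{(T)}(u)=\exp\{iu^{\intercal}K_{T}^{-1}S_{t}+\frac{1}{2}u^{\intercal}K_{T}^{-1}[S]_{t}K_{T}^{-1}u\}$ through the martingale identity, and peel off the quadratic-variation correction using (ii). That is the right family of ideas, and your observations that (i) is vacuous for continuous $S$ and that the $\mathcal{F}_{r}$-approximation of general $\mathcal{F}_{\infty}$-measurable $G$ is uniform in $T$ (because the test functions have modulus $1$) are both correct.

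The genuine gap is the sentence asserting that $N^{(T)}(u)$ is ``a genuine uniformly integrable martingale'' on the strength of the $L^{2}$/UI bounds from (iii). Since $\lvert N_{t}^{(T)}(u)\rvert=\exp\{\frac{1}{2}u^{\intercal}K_{T}^{-1}[S]_{t}K_{T}^{-1}u\}$, what you need is an \emph{exponential} moment of the normalized quadratic variation, and $L^{1}$-boundedness (or even uniform integrability) of $K_{T}^{-1}[S]_{T}K_{T}^{-1}$ gives no control on that: $\mathbb{E}\lvert N_{T}^{(T)}(u)\rvert$ may be infinite, in which case the identity $\mathbb{E}[N_{T}^{(T)}(u)\mid\mathcal{F}_{r}]=N_{r}^{(T)}(u)$ on which your entire argument rests is unavailable, and the same unbounded factor invalidates the final ``replace the correction factor by its limit'' step, whose error is again controlled by $\mathbb{E}\exp\{\frac{1}{2}u^{\intercal}K_{T}^{-1}[S]_{T}K_{T}^{-1}u\}$. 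The standard repair---and the missing idea---is a stopping-time truncation of the quadratic variation itself: set $\tau_{T,C}:=\inf\{t\leq T: u^{\intercal}K_{T}^{-1}[S]_{t}K_{T}^{-1}u\geq C\}$, so that by continuity of $[S]$ the stopped exponential satisfies $\lvert N_{t\wedge\tau_{T,C}}^{(T)}(u)\rvert\leq e^{C/2}$ and is a bounded (hence UI) martingale; condition (ii) and tightness of $u^{\intercal}\mathcal{X}_{1}u$ then give $\limsup_{T}\mathbb{P}(\tau_{T,C}<T)\leq\mathbb{P}(u^{\intercal}\mathcal{X}_{1}u\geq C)$, which is made small by letting $C\to\infty$ after $T\to\infty$. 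Your proposed ``nesting argument with $\mathcal{F}_{r}$-measurable truncations of $\mathcal{X}_{1}$'' does not substitute for this, because it truncates the limit rather than the prelimit and leaves the possibly non-integrable factor $\lvert N_{T}^{(T)}(u)\rvert$ inside the expectations. A secondary soft spot: your claim that (ii) together with (iii) yields $L^{1}$-convergence/UI of $K_{T}^{-1}[S]_{T}K_{T}^{-1}$ implicitly requires $\mathbb{E}[\mathcal{X}_{1}]=\mathcal{X}_{2}$, which is not a hypothesis (Fatou gives only $\mathbb{E}[\mathcal{X}_{1}]\preceq\mathcal{X}_{2}$); after the stopping-time fix this step is no longer needed, since the role of (iii) reduces to tightness and nondegeneracy.
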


\begin{proposition}
Let $\hat{\beta}_{n}$ be the estimator of $\beta$ defined in (\ref{beta-form}) (true value $\beta_{0}$). Assume that the conditions of Theorem~\ref{theorem-heyde} hold. Then, as $n\to\infty$,
\begin{equation}\nonumber
    \sqrt{n\Delta}(\hat{\beta}_{n}-\beta_{0})\xrightarrow{d_{st}}\mathcal{N}\Big(0,\frac{2\beta_{0}}{\alpha}(\alpha+\gamma^{2})\Big).
\end{equation}
\end{proposition}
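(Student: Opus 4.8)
\noindent\textit{Sketch of the argument.}
The plan is to recognize that the multidimensional martingale CLT invoked in the proof of Theorem~\ref{beta-theorem} is, under the hypotheses of Theorem~\ref{theorem-heyde}, already a statement of stable convergence in law, and then to transport this stability through the two reductions used there: the representation (\ref{beta-estimator}) of the continuous statistic $\tilde{\beta}_{n}$ in terms of the martingale $M_{T}$, and the discrete--continuous comparison (\ref{beta-decomp}) linking $\hat{\beta}_{n}$ to $\tilde{\beta}_{n}$. The engine throughout is the Slutsky-type stability of $\xrightarrow{d_{st}}$: if $Z_{n}\xrightarrow{d_{st}}Z$ and $Y_{n}\xrightarrow{p}c$ for a constant (more generally, an $\mathcal{F}$-measurable) limit $c$, then $(Z_{n},Y_{n})\xrightarrow{d_{st}}(Z,c)$ jointly, so continuous functions of the pair again converge stably.

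First I would apply Theorem~\ref{theorem-heyde} to the continuous $2$-dimensional martingale $M_{T}=\big(\int_{0}^{T}X_{t}^{1/2}dW_{t},-\int_{0}^{T}X_{t}^{3/2}dW_{t}\big)^{\intercal}$ with normalizing diagonal $K_{T}=\sqrt{T}\,I_{2}$, i.e.\ $\kappa_{jT}=\sqrt{T}$. Condition (i) is automatic because $M_{T}$ has continuous paths and hence no jumps; conditions (ii) and (iii) hold with $\mathcal{X}_{1}=\mathcal{X}_{2}=\Gamma$, since the ergodic averages $\frac{1}{T}[M]_{T}\xrightarrow{p}\Gamma$ established in the proof of Theorem~\ref{beta-theorem} identify the limiting (co)variance, and this limit is \emph{deterministic}. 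Because $\mathcal{X}_{1}$ is non-random, the normal variance mixture in Theorem~\ref{theorem-heyde} collapses to a genuine centered Gaussian, yielding
\begin{equation}\nonumber
    \frac{1}{\sqrt{T}}M_{T}\xrightarrow{d_{st}}\mathcal{N}(\mathbf{0}_{2},\Gamma).
\end{equation}

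Next I would propagate this stable limit through (\ref{beta-estimator}). Since $D_{T}\xrightarrow{a.s.}\frac{\alpha\gamma^{2}}{2\beta_{0}^{2}}=:c$ and $R(T)\xrightarrow{p}0$, the stable Slutsky property lets me multiply $\frac{1}{\sqrt{T}}M_{T}$ by the converging scalar $\gamma/D_{T}$, contract with the fixed vector $(\alpha/\beta_{0},1)$, and absorb the negligible $R(T)$, all while retaining stable convergence. The limiting variance is exactly the quadratic form $\frac{\gamma^{2}}{c^{2}}(\alpha/\beta_{0},1)\Gamma(\alpha/\beta_{0},1)^{\intercal}$, already evaluated to $\frac{2\beta_{0}}{\alpha}(\alpha+\gamma^{2})$ in Theorem~\ref{beta-theorem}, so that $\sqrt{T}(\tilde{\beta}_{n}-\beta_{0})\xrightarrow{d_{st}}\mathcal{N}\big(0,\frac{2\beta_{0}}{\alpha}(\alpha+\gamma^{2})\big)$. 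Finally, the discrete--continuous relation (\ref{beta-decomp}) gives $\hat{\beta}_{n}-\tilde{\beta}_{n}=o_{p}(T^{-1/2})$, i.e.\ $\sqrt{T}(\hat{\beta}_{n}-\tilde{\beta}_{n})\xrightarrow{p}0$; adding this null term and again invoking stable Slutsky transfers the stable CLT from $\tilde{\beta}_{n}$ to $\hat{\beta}_{n}$. Recalling $T=n\Delta$ delivers the claim.

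The main obstacle, and really the only point requiring care, is justifying that stable convergence survives each of these Slutsky manipulations. The crucial structural fact is that all the multipliers and error terms ($\gamma/D_{T}$, $R(T)$, and $\hat{\beta}_{n}-\tilde{\beta}_{n}$) converge in probability to deterministic constants, which are trivially $\mathcal{F}$-measurable; this is what guarantees joint stable convergence of the relevant pairs and hence the validity of the continuous-mapping step. I would also emphasize that the deterministic nature of $\mathcal{X}_{1}=\Gamma$ is precisely what reduces the normal variance mixture to an ordinary Gaussian, a feature automatic here because the ergodic limits of $\frac{1}{T}\int_{0}^{T}X_{t}^{q}dt$ are constants, but which would fail in a non-ergodic regime where the limiting mixing variable is genuinely random.
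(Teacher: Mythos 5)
Your proposal follows essentially the same route as the paper's proof: apply Theorem~\ref{theorem-heyde} to $M_{T}$ with scaling $\sqrt{T}I_{2}$ (condition (i) trivial by path continuity, (ii)--(iii) via the ergodic limits so that the deterministic $\mathcal{X}_{1}=\mathcal{X}_{2}=\Gamma$ collapses the variance mixture to $\mathcal{N}(\bm{0}_{2},\Gamma)$), then propagate through the representation (\ref{beta-estimator}) and the gap $\hat{\beta}_{n}-\tilde{\beta}_{n}=o_{p}(T^{-1/2})$ from (\ref{beta-decomp}) by stable Slutsky arguments. The only details you leave implicit that the paper spells out are the positive definiteness of $\Gamma$ (via Cauchy--Schwarz, needed for condition (iii)) and the use of $\mathbb{E}[M_{T}M_{T}^{\intercal}]=\mathbb{E}[[M]_{T}]$ together with $\mathcal{L}^{1}$-ergodic convergence to verify the second-moment condition, both routine.
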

\begin{proof}
The idea is to utilize the equivalence between the discrete-time $\hat{\beta}_{n}$ and the continuous-time $\tilde{\beta}_{n}$ in (\ref{beta-decomp}):
\begin{equation}\nonumber
    \sqrt{T}(\tilde{\beta}_{n}-\beta_{0})=\frac{\hat{D}_{n}}{D_{T}}\Big(\sqrt{n\Delta}(\hat{\beta}_{n}-\beta_{0})-e(n)\Big).
\end{equation}
Recall the continuous 2-dimensional Brownian motion (martingale) with quadratic variation, with the notations $M_{1T}:=\int_{0}^{T}X_{s}^{\frac{1}{2}}dW_{s}$ and $M_{2T}:=-\int_{0}^{T}X_{s}^{\frac{3}{2}}dW_{s}$:
\begin{equation}\nonumber
    M_{T}=\Big(\int_{0}^{T}X_{s}^{\frac{1}{2}}dW_{s},-\int_{0}^{T}X_{s}^{\frac{3}{2}}dW_{s}\Big)^{\intercal}\quad\Longrightarrow\quad[M]_{T}=
    \begin{pmatrix}
    \int_{0}^{T}X_{t}dt & -\int_{0}^{T}X_{t}^{2}dt\\
    -\int_{0}^{T}X_{t}^{2}dt & \int_{0}^{T}X_{t}^{3}dt
    \end{pmatrix}.
\end{equation}
Define a $2\times2$ deterministic diagonal scaling matrix $S_{T}:=\sqrt{T}I_{2}=\text{diag}(S_{1T},S_{2T})$.\\[0.5\baselineskip]
\noindent \textit{Step~1. (Verifying conditions of Theorem~\ref{theorem-heyde})}\\
\noindent For (\romannumeral1): Since $X_{t}$ has continuous paths (see Lemma~\ref{ergodicity-of-CIR}'s (\romannumeral6), established results (see e.g., \cite{protter2005stochastic} [Chapter 6, Corollary 3, pp.73-74] imply that both components of $M_{T}$ are continuous $\mathcal{L}^{2}$-martingales. Hence $M_{t}$ has no jumps, and
\begin{equation}\nonumber
    S_{jT}^{-1}\sup_{0\leq s\leq T}\big\lvert M_{js}-M_{js-}\big\lvert=0 \xrightarrow{p}0,\qquad \text{for $j=1,2$}\quad\text{as $T\to\infty$},
\end{equation}
which verifies condition (\romannumeral1) of Theorem~\ref{theorem-heyde}.\\[0.5\baselineskip]
\noindent For (\romannumeral2): Recall that
\begin{equation}\nonumber
    \frac{1}{T} [M]_{T} \xrightarrow[T\to\infty]{p}\Gamma=
\begin{pmatrix}
    \frac{\alpha}{\beta_{0}} & -\frac{\alpha}{\beta_{0}^{2}}(\alpha+\frac{\gamma^{2}}{2}) \\
    -\frac{\alpha}{\beta_{0}^{2}}(\alpha+\frac{\gamma^{2}}{2}) & \frac{\alpha}{\beta_{0}^{3}}(\alpha+\gamma^{2})(\alpha+\frac{\gamma^{2}}{2})
\end{pmatrix}
    =:\begin{pmatrix}
        m_{1} & -m_{2} \\
        -m_{2} & m_{3}
    \end{pmatrix}.
\end{equation}
Since $S_{T}^{-1}=T^{-\frac{1}{2}}I_{2}$, we have
\begin{equation}\nonumber
    S_{T}^{-1}[M]_{T} S_{T}^{-1}
    =\frac{1}{T}[M]_{T}
    \xrightarrow[T\to\infty]{p}
    \Gamma.
\end{equation}
The limit matrix $\Gamma$ is positive definite: by Cauchy--Schwarz inequality and the nondegeneracy of $X_{t}\geq 0$, $m_{2}^{2}=\big(\mathbb{E}[X_{t}^{2}]\big)^{2}<\mathbb{E}[X_{t}]\mathbb{E}[X_{t}^{3}]=m_{1}m_{3}$, so $\det(\Gamma)=m_{1}m_{3}-m_{2}^{2}>0$.  
Hence condition (\romannumeral2) of Theorem~\ref{theorem-heyde} is verified with deterministic limit $\Gamma$.\\[0.5\baselineskip]
\noindent For (\romannumeral3): Since $M_{T}$ is continuous, $\mathbb{E}[M_{T}M_{T}^{\intercal}]=\mathbb{E}\big[[M]_{T}\big]$.  
Using $\frac{1}{T}[M]_{T}\xrightarrow[T\to\infty]{p}\Gamma$, we get $S_{T}^{-1}\mathbb{E}[M_{T}M_{T}^{\intercal}]S_{T}^{-1}=\mathbb{E}\big[T^{-1}[M]_{T}\big]\to\Gamma$, thus condition (iii) of Theorem~\ref{theorem-heyde} holds.\\[0.5\baselineskip]
\noindent Since limits in (\romannumeral2) and (\romannumeral3) coincide, the normal variance mixture limit reduces to a fixed Gaussian one:
\begin{equation}\label{eq:vector-stable}
    \frac{M_{T}}{\sqrt{T}}\xrightarrow[T\to\infty]{d_{st}}Y,\quad Y\sim\mathcal{N}(\bm{0}_{2},\Gamma).
\end{equation}
\noindent \textit{Step~2 (Projection and stable limit for $\hat{\beta}_{n}$)}\\
\noindent Let $u:=(\alpha/\beta_{0},-1)^{\intercal}$.  By the stable limit (\ref{eq:vector-stable}) and Cram\'{e}r--Wold theorem:
\begin{equation}\nonumber
	u^{\intercal}\frac{M_{T}}{\sqrt{T}}\xrightarrow[T\to\infty]{d_{st}}\mathcal{N}\big(0,u^{\intercal}\Gamma u\big),
\end{equation}
and a direct substitution of $(m_{1},m_{2},m_{3})$ yields
\begin{equation}\label{u-eta-u}
	u^{\intercal}\Gamma u=\Big(\frac{\alpha}{\beta_{0}}\Big)^{2}m_{1}-2\frac{\alpha}{\beta_{0}}m_{2}+m_{3}=\frac{\alpha^{2}}{\beta_{0}^{2}}\frac{\alpha}{\beta_{0}}-\frac{2\alpha}{\beta_{0}}\frac{\alpha(\alpha+\frac{\gamma^{2}}{2})}{\beta_{0}^{2}}+\frac{\alpha(\alpha+\gamma^{2})(\alpha+\frac{\gamma^{2}}{2})}{\beta_{0}^{3}}=\frac{\alpha\gamma^{2}}{2\beta_{0}^{3}}(\alpha+\gamma^{2}).
\end{equation}
Next, we can rewrite $\tilde{\beta}_{n}$ (\ref{beta-estimator}) as:
\begin{equation}\nonumber
    \sqrt{T}(\tilde{\beta}_{n}-\beta_{0})=\frac{\gamma}{D_{T}\sqrt{T}}\int_{0}^{T}\Big(\frac{\alpha}{\beta_{0}}X_{t}^{\frac{1}{2}}-X_{t}^{\frac{3}{2}}\Big)dW_{t}+R(T)~\Longleftrightarrow~\sqrt{T}(\tilde{\beta}_{n}-\beta_{0})=\frac{\gamma}{D_{T}}u^{\intercal}\frac{M_{T}}{\sqrt{T}}+R(T).
\end{equation}
Recall that $D_{T}\xrightarrow{a.s.}c=\frac{\alpha\gamma^{2}}{2\beta_{0}^{2}}>0$ and $R(T)\xrightarrow{p}0$ as $T\to\infty$. Combining (\ref{eq:vector-stable}) and (\ref{u-eta-u}) and applying the stable Slutsky's theorem~\ref{stable-slutsky}'s (\romannumeral3) yields, as $n\to\infty$,
\begin{equation}\nonumber
	\sqrt{T}(\tilde{\beta}_{n}-\beta_{0})\xrightarrow{d_{st}}\mathcal{N}\Big(0,\frac{\gamma^{2}}{c^{2}}u^{\intercal}\Gamma u\Big)=\mathcal{N}\Big(0,\frac{2\beta_{0}}{\alpha}(\alpha+\gamma^{2})\Big).
\end{equation}
Since $T\equiv n\Delta$ and $\hat{\beta}_{n}-\tilde{\beta}_{T}=o_{p}\big(T^{-\frac{1}{2}}\big)$ by (\ref{beta-decomp}), another stable Slutsky argument yields
\begin{equation}\nonumber
	\sqrt{n\Delta}(\hat{\beta}_{n}-\beta_{0})\xrightarrow{d_{st}}\mathcal{N}\Big(0,\frac{2\beta_{0}}{\alpha}(\alpha+\gamma^{2})\Big)\qquad\text{as }n\to\infty.\qedhere
\end{equation}
\renewcommand{\qed}{}
\end{proof}

\subsection{Stable Convergence in Law under Changes of Measure}
Since we have established the stable convergence in law for the sequence $\{{\hat{\beta}_{n}}\}_{n\geq 1}$ to an asymptotically normal limit, in our CKLS--CIR scenario, it is natural to ask how this limit behaves under an equivalent change of measure. This subsection analyzes how a stably convergent sequence with a known limit distribution is affected by an equivalent change of measure. 

We first state LeCam's third lemma, followed by a sufficient condition ensuring invariance of stable convergence under an equivalent measure when the limit is normal—a specialization of the lemma tailored for our scenario.

\begin{lemma}[LeCam's Third Lemma]\label{LeCam}
Let $(\Omega_{n},\mathcal{F}_{n})$ be measurable spaces, $\{\mathbb{P}_{n}\}_{n\geq 1}$ and $\{\mathbb{Q}_{n}\}_{n\geq 1}$ be contiguous sequences of probability measures on $(\Omega_{n},\mathcal{F}_{n})$ with the likelihood ratios $\Lambda_{n}:=d\mathbb{Q}_{n}/d\mathbb{P}_{n}$. Let $Y_{n}$  be a sequence of $\mathbb{R}^{q}$-valued statistics defined on $(\Omega_{n},\mathcal{F}_{n})$. Assume that under $\mathbb{P}_{n}$, as $n\to\infty$, we have the joint convergence in law
\[
(Y_{n},\Lambda_{n})\xrightarrow{d}(Y,\Lambda)
\quad\text{ on }\mathbb{R}^{q}\times[0,+\infty)\quad\text{ with }\mathbb{E}[\Lambda]=1.
\]
Then, for every bounded continuous $g:\mathbb{R}^{q}\to\mathbb{R}$, $\lim_{n\to\infty}\mathbb{E}^{\mathbb{Q}_{n}}[g(Y_{n})]
=\mathbb{E}[g(Y)\Lambda]=\int_{\mathbb{R}^{q}\times[0,+\infty)}g(y)zd\mu(y,z)$ where $\mu$ denotes the joint probability measure of $(Y,\Lambda)$. Equivalently speaking, under $\mathbb{Q}_{n}$, the law of $Y_{n}$ converges to the $\Lambda$-tilted distribution of $Y$.
\end{lemma}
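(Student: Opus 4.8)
The plan is to convert the $\mathbb{Q}_{n}$-expectation into a $\mathbb{P}_{n}$-expectation weighted by the likelihood ratio, and then to upgrade the joint weak convergence of $(Y_{n},\Lambda_{n})$ into convergence of the expectation of the \emph{unbounded} functional $(y,z)\mapsto g(y)z$. Because $\Lambda_{n}=d\mathbb{Q}_{n}/d\mathbb{P}_{n}$, for every bounded measurable $g$ we have the change-of-measure identity
\begin{equation}\nonumber
    \mathbb{E}^{\mathbb{Q}_{n}}[g(Y_{n})]=\mathbb{E}^{\mathbb{P}_{n}}[g(Y_{n})\Lambda_{n}],
\end{equation}
so it suffices to show $\mathbb{E}^{\mathbb{P}_{n}}[g(Y_{n})\Lambda_{n}]\to\mathbb{E}[g(Y)\Lambda]$. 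The difficulty is that $(y,z)\mapsto g(y)z$ is continuous but unbounded on $\mathbb{R}^{q}\times[0,+\infty)$, so the portmanteau theorem cannot be applied directly to the joint law; the remedy is to prove uniform integrability of $\{\Lambda_{n}\}$ under $\mathbb{P}_{n}$, for which the hypothesis $\mathbb{E}[\Lambda]=1$ will be the decisive input.

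First I would truncate the weight at a level $M>0$:
\begin{equation}\nonumber
    \mathbb{E}^{\mathbb{P}_{n}}[g(Y_{n})\Lambda_{n}]=\mathbb{E}^{\mathbb{P}_{n}}\big[g(Y_{n})(\Lambda_{n}\wedge M)\big]+\mathbb{E}^{\mathbb{P}_{n}}\big[g(Y_{n})(\Lambda_{n}-\Lambda_{n}\wedge M)\big].
\end{equation}
The map $(y,z)\mapsto g(y)(z\wedge M)$ is bounded and continuous, so applying the portmanteau theorem to the laws of $(Y_{n},\Lambda_{n})$ on $\mathbb{R}^{q}\times[0,+\infty)$ gives
\begin{equation}\nonumber
    \mathbb{E}^{\mathbb{P}_{n}}\big[g(Y_{n})(\Lambda_{n}\wedge M)\big]\xrightarrow[n\to\infty]{}\mathbb{E}\big[g(Y)(\Lambda\wedge M)\big].
\end{equation}
The remaining term is bounded in absolute value by $\|g\|_{\infty}\,\mathbb{E}^{\mathbb{P}_{n}}[(\Lambda_{n}-M)^{+}]$, and the proof closes once this tail is shown to be uniformly small in $n$ as $M\to\infty$.

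To control the tail I would use that $\Lambda_{n}\ge0$, that $\mathbb{E}^{\mathbb{P}_{n}}[\Lambda_{n}]=1$ for each $n$ (since $\mathbb{Q}_{n}$ is a probability measure with $\mathbb{Q}_{n}\ll\mathbb{P}_{n}$), and that $z\mapsto z\wedge M$ is bounded continuous. Writing $(\Lambda_{n}-M)^{+}=\Lambda_{n}-\Lambda_{n}\wedge M$ and invoking the marginal convergence $\Lambda_{n}\xrightarrow{d}\Lambda$,
\begin{equation}\nonumber
    \mathbb{E}^{\mathbb{P}_{n}}[(\Lambda_{n}-M)^{+}]=1-\mathbb{E}^{\mathbb{P}_{n}}[\Lambda_{n}\wedge M]\xrightarrow[n\to\infty]{}1-\mathbb{E}[\Lambda\wedge M]=\mathbb{E}[(\Lambda-M)^{+}],
\end{equation}
where each constant $1$ is exactly $\mathbb{E}^{\mathbb{P}_{n}}[\Lambda_{n}]=\mathbb{E}[\Lambda]$. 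Since $\Lambda$ is integrable, $\mathbb{E}[(\Lambda-M)^{+}]\to0$ as $M\to\infty$ by dominated convergence, which establishes the uniform integrability of $\{\Lambda_{n}\}$. Combining the three displays, taking $\limsup_{n}$ and then $M\to\infty$, and using a triangle-inequality estimate against $\mathbb{E}[g(Y)(\Lambda\wedge M)]$ yields $\mathbb{E}^{\mathbb{P}_{n}}[g(Y_{n})\Lambda_{n}]\to\mathbb{E}[g(Y)\Lambda]$; rewriting this expectation against the joint law $\mu$ of $(Y,\Lambda)$ gives the stated formula and identifies the $\mathbb{Q}_{n}$-limit of $Y_{n}$ as the $\Lambda$-tilted law of $Y$.

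The step I expect to be the crux is this uniform-integrability control: weak convergence never passes through an unbounded multiplier by itself, and it is only the matching of means $\mathbb{E}^{\mathbb{P}_{n}}[\Lambda_{n}]=1$ with $\mathbb{E}[\Lambda]=1$ that prevents mass from escaping to infinity and forces $\{\Lambda_{n}\}$ to be uniformly integrable. The role of $\mathbb{E}[\Lambda]=1$ is thus structural rather than cosmetic, and in the present CKLS--CIR setting it is guaranteed by the true-martingale property of the Dol\'{e}ans--Dade exponential established in Theorem~\ref{thm:main}. A secondary technical point concerns the reduction identity: if $\mathbb{Q}_{n}$ fails to be fully absolutely continuous, one splits off its singular part and bounds its contribution by $\|g\|_{\infty}\big(1-\mathbb{E}^{\mathbb{P}_{n}}[\Lambda_{n}]\big)$, which vanishes because Fatou's lemma and $\mathbb{E}[\Lambda]=1$ force $\mathbb{E}^{\mathbb{P}_{n}}[\Lambda_{n}]\to1$; under the equivalence $\mathbb{Q}\sim\mathbb{P}$ relevant here this complication does not arise and the identity holds exactly.
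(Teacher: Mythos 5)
Your proof is correct, but note that the paper itself does not prove this lemma at all: its ``proof'' is the pointer ``See e.g., \cite{van2002statistical} [Lemma 3.1]'', so what you have produced is a self-contained reconstruction of the standard argument behind that citation. Your route is the classical one and it is sound: the change-of-measure identity $\mathbb{E}^{\mathbb{Q}_{n}}[g(Y_{n})]=\mathbb{E}^{\mathbb{P}_{n}}[g(Y_{n})\Lambda_{n}]$, truncation of the weight at level $M$, portmanteau applied to the bounded continuous map $(y,z)\mapsto g(y)(z\wedge M)$, and the tail control $\mathbb{E}^{\mathbb{P}_{n}}[(\Lambda_{n}-M)^{+}]=\mathbb{E}^{\mathbb{P}_{n}}[\Lambda_{n}]-\mathbb{E}^{\mathbb{P}_{n}}[\Lambda_{n}\wedge M]\to\mathbb{E}[(\Lambda-M)^{+}]\to 0$ as $M\to\infty$, with the matching of means $\mathbb{E}^{\mathbb{P}_{n}}[\Lambda_{n}]=1=\mathbb{E}[\Lambda]$ doing the decisive work. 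Strictly speaking what you establish is the asymptotic uniform-integrability statement $\lim_{M\to\infty}\limsup_{n}\mathbb{E}^{\mathbb{P}_{n}}[(\Lambda_{n}-M)^{+}]=0$ (the $\Lambda_{n}$ live on different spaces, so ``uniform integrability of the family'' should be read in this limsup sense), which is exactly what your triangle-inequality estimate requires, so this is a matter of phrasing rather than a gap. Your treatment of the possibly singular part is also correct: with $\Lambda_{n}$ the density of the absolutely continuous part of $\mathbb{Q}_{n}$, the defect is bounded by $\lVert g\rVert_{\infty}\big(1-\mathbb{E}^{\mathbb{P}_{n}}[\Lambda_{n}]\big)$, and weak-convergence Fatou ($\liminf_{n}\mathbb{E}^{\mathbb{P}_{n}}[\Lambda_{n}]\geq\mathbb{E}[\Lambda]=1$) together with $\mathbb{E}^{\mathbb{P}_{n}}[\Lambda_{n}]\leq 1$ forces $\mathbb{E}^{\mathbb{P}_{n}}[\Lambda_{n}]\to 1$; in the paper's application $\mathbb{Q}\sim\mathbb{P}$, so this refinement is not needed there, as you observe. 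One by-product worth noting: your argument never uses the contiguity hypothesis stated in the lemma --- contiguity of $\{\mathbb{Q}_{n}\}$ with respect to $\{\mathbb{P}_{n}\}$ is in fact a \emph{consequence} of $\Lambda_{n}\xrightarrow{d}\Lambda$ with $\mathbb{E}[\Lambda]=1$ (LeCam's first lemma), so your proof shows the hypothesis is redundant and is, in that respect, slightly more economical than the statement itself.
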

\begin{proof}
    See e.g., \cite{van2002statistical} [Lemma 3.1], pp.641.
\end{proof}
\begin{remark}[Interpretation of LeCam's third lemma]
The lemma explains how the limit distribution of a statistic changes when switching from a reference sequence of measures $\{\mathbb{P}_{n}\}_{n\geq 1}$ to another $\{\mathbb{Q}_{n}\}_{n\geq 1}$ that is locally close to $\{\mathbb{P}_{n}\}$ in the sense of likelihood ratios. Under $\mathbb{Q}_{n}$, the limiting law of $Y_{n}$ is obtained by exponentially tilting the joint limit $(Y,\Lambda)$ under $\mathbb{P}_{n}$ with the likelihood ratio $\Lambda$. More precisely, under $\mathbb{P}_{n}$, suppose we know the joint limit $(Y,\Lambda)$ of $(Y_{n},\Lambda_{n})$. LeCam's third lemma asserts that the limit law of $Y_{n}$ under $\mathbb{Q}_{n}$ is obtained simply by $\Lambda$-tilting this limit; no separate asymptotic analysis under $\mathbb{Q}_{n}$ is required.
\end{remark}

\noindent To apply Lemma~\ref{LeCam} in our scenario, we take 
$\Omega_{n}\equiv\Omega$, $\mathcal{F}_{n}\equiv\mathcal{F}_{T}$,
$\mathbb{P}_{n}=\mathbb{P}$, $\mathbb{Q}_{n}\equiv\mathbb{Q}$,
so that $\Lambda_{n}\equiv \Lambda_{T}=\frac{d\mathbb{Q}}{d\mathbb{P}}\big\lvert_{\mathcal{F}_{T}}$. As $n\to\infty$, if $Y_{n}\xrightarrow{d_{st}}Y$ with respect to $\mathcal{F}_{T}$ , then for any bounded $\mathcal{F}_{T}$-measurable $\zeta$ we have $(Y_{n},\zeta)\xrightarrow{d}(Y,\zeta)$ under $\mathbb{P}$. Since $\Lambda_{T}$ is $\mathcal{F}_{T}$-measurable, 
$(Y_{n},\Lambda_{T})\xrightarrow{d}(Y,\Lambda_{T})$ follows. The next theorem is an immediate corollary of Lemma~\ref{LeCam}.

\begin{theorem}\label{preservation of convergence}[\cite{mykland2009inference}, \lbrack Proposition 1\rbrack]\\
Let $(\Omega,\mathcal{F},(\mathcal{F}_{t})_{t\in[0,T]},\mathbb{P})$ be a filtered probability space, $\Lambda_{T}:=\frac{d\mathbb{Q}}{d\mathbb{P}}\big\lvert_{\mathcal{F}_{T}}$ be the likelihood ratio of a measure $\mathbb{Q}$ equivalent to $\mathbb{P}$ on $(\Omega,\mathcal{F}_{T})$; in particular, $\Lambda_{T}\geq 0$, $\mathcal{F}_{T}$-measurable, $\Lambda_{T}\in\mathcal{L}^{1}(\mathbb{P})$, $\mathbb{E}^{\mathbb{P}}[\Lambda_{T}]=1$. Suppose $Y_{n}\xrightarrow{d_{st}}Y$ as $n\to\infty$ under $\mathbb{P}$ with respect to $\mathcal{F}_{T}$, $Y$ is a normal variance mixture:
\begin{equation}\nonumber
	Y=U+V\varphi,\quad U,V\text{ are }\mathcal{F}_{T}\text{-measurable, and }\varphi\sim\mathcal{N}(0,1),\varphi\!\perp\!\!\!\perp \mathcal{F}_{T}\text{ (on a product extension)}.
\end{equation}
Then $Y_{n}\xrightarrow{d_{st}}Y$ as $n\to\infty$ remains valid under $\mathbb{Q}$ with respect to $\mathcal{F}_{T}$, and $\varphi$ remains independent of $\mathcal{F}_{T}$ under $\mathbb{Q}$. Moreover, the reverse implication (from $\mathbb{Q}$ to $\mathbb{P}$) also holds.
\end{theorem}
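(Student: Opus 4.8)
The plan is to deduce the statement directly from LeCam's third lemma (Lemma~\ref{LeCam}), applied not to $Y_{n}$ alone but to the \emph{augmented} statistic $(Y_{n},Z)$, where $Z$ ranges over bounded $\mathcal{F}_{T}$-measurable random variables. Recall that $Y_{n}\xrightarrow{d_{st}}Y$ under $\mathbb{P}$ with respect to $\mathcal{F}_{T}$ means, by definition, that for every bounded continuous $g$ and every bounded $\mathcal{F}_{T}$-measurable $Z$ one has $\mathbb{E}^{\mathbb{P}}[g(Y_{n})Z]\to\mathbb{E}[g(Y)Z]$; equivalently, $(Y_{n},H)\xrightarrow{d}(Y,H)$ under $\mathbb{P}$ for every $\mathcal{F}_{T}$-measurable $H$. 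To establish stable convergence under $\mathbb{Q}$ it therefore suffices to prove that $\mathbb{E}^{\mathbb{Q}}[g(Y_{n})Z]\to\mathbb{E}^{\mathbb{Q}}[g(Y)Z]$ for all such $g,Z$.

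First I would set up the application exactly as in the paragraph preceding the theorem: take $\Omega_{n}\equiv\Omega$, $\mathcal{F}_{n}\equiv\mathcal{F}_{T}$, $\mathbb{P}_{n}\equiv\mathbb{P}$, $\mathbb{Q}_{n}\equiv\mathbb{Q}$, and $\Lambda_{n}\equiv\Lambda_{T}$. Since $\mathbb{P}$ and $\mathbb{Q}$ are equivalent on $(\Omega,\mathcal{F}_{T})$, the constant sequences are mutually contiguous, and the normalization $\mathbb{E}^{\mathbb{P}}[\Lambda_{T}]=1$ holds by hypothesis. Choosing $H:=(Z,\Lambda_{T})$ in the stable convergence yields the joint limit $\big((Y_{n},Z),\Lambda_{T}\big)\xrightarrow{d}\big((Y,Z),\Lambda_{T}\big)$ under $\mathbb{P}$. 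Lemma~\ref{LeCam}, applied to the statistic $(Y_{n},Z)$ and to a bounded continuous map agreeing with $(y,z)\mapsto g(y)z$ on $\mathbb{R}^{q}\times[-C,C]$ (which is legitimate since $g$ is bounded and $Z$ is bounded by some $C$, so the truncation does not change $g(Y_{n})Z$ nor $g(Y)Z$ almost surely), then gives $\mathbb{E}^{\mathbb{Q}}[g(Y_{n})Z]\to\mathbb{E}[g(Y)Z\,\Lambda_{T}]$. Since $\Lambda_{T}$ is the density, the right-hand side equals $\mathbb{E}^{\mathbb{Q}}[g(Y)Z]$ on the extension, which is precisely stable convergence of $Y_{n}$ to $Y$ under $\mathbb{Q}$.

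It remains to verify that the mixture representation $Y=U+V\varphi$ survives the change of measure. I would realize the limit on the product extension $\bar\Omega=\Omega\times\mathbb{R}$ carrying $\varphi$, with $\bar{\mathbb{P}}=\mathbb{P}\otimes\mathcal{N}(0,1)$, and extend $\mathbb{Q}$ to $\bar{\mathbb{Q}}=\mathbb{Q}\otimes\mathcal{N}(0,1)$, so that $d\bar{\mathbb{Q}}/d\bar{\mathbb{P}}=\Lambda_{T}$ depends only on the $\Omega$-coordinate. Because $\Lambda_{T}$ is $\mathcal{F}_{T}$-measurable and $\varphi\!\perp\!\!\!\perp\mathcal{F}_{T}$ under $\bar{\mathbb{P}}$, for bounded $f$ and bounded $\mathcal{F}_{T}$-measurable $Z$ the identity $\mathbb{E}^{\bar{\mathbb{Q}}}[f(\varphi)Z]=\mathbb{E}^{\bar{\mathbb{P}}}[f(\varphi)Z\Lambda_{T}]=\mathbb{E}[f(\varphi)]\,\mathbb{E}^{\mathbb{Q}}[Z]$ shows that $\varphi$ retains its $\mathcal{N}(0,1)$ law and its independence of $\mathcal{F}_{T}$ under $\bar{\mathbb{Q}}$; equivalently, the $\mathcal{F}_{T}$-conditional law of $Y$ is unchanged because the density is $\mathcal{F}_{T}$-measurable. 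As $U,V$ are $\mathcal{F}_{T}$-measurable, the same representation $Y=U+V\varphi$ describes the limit under $\mathbb{Q}$. The reverse implication follows by symmetry: $\mathbb{P}$ and $\mathbb{Q}$ are interchangeable, with $d\mathbb{P}/d\mathbb{Q}=\Lambda_{T}^{-1}$ again positive, $\mathcal{F}_{T}$-measurable, and of unit $\mathbb{Q}$-expectation, so the identical argument transfers stable convergence from $\mathbb{Q}$ back to $\mathbb{P}$.

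The one delicate point — and the reason the statement is not entirely trivial — is the \emph{unboundedness} of the density $\Lambda_{T}$. Stable convergence only licenses testing against bounded $\mathcal{F}_{T}$-measurable weights, whereas the change of measure inserts the generally unbounded weight $\Lambda_{T}$, so the naive interchange of limit and expectation is not immediate. This is exactly what LeCam's third lemma handles: the matching normalizations $\mathbb{E}[\Lambda_{n}]=\mathbb{E}[\Lambda]=1$ encode the uniform integrability (equivalently, the absence of escaping mass) that validates tilting against an unbounded density. If one preferred not to invoke Lemma~\ref{LeCam}, the same conclusion is reachable by truncating $\Lambda_{T}^{(K)}:=\Lambda_{T}\wedge K$, applying stable convergence to the bounded weight $Z\Lambda_{T}^{(K)}$, and controlling the tail $\mathbb{E}^{\mathbb{P}}[\Lambda_{T}\mathds{1}_{\{\Lambda_{T}>K\}}]\to0$ uniformly in $n$ via a standard $\varepsilon/3$ estimate; either way, the crucial ingredient is the integrability $\Lambda_{T}\in\mathcal{L}^{1}(\mathbb{P})$.
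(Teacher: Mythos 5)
Your proposal is correct, and it diverges from the paper's written proof in an interesting way. The paper's Section~5 text asserts that the theorem is "an immediate corollary" of Lemma~\ref{LeCam}, but the actual proof in Appendix~\ref{ccc} never invokes that lemma: it argues directly, bounding $\lvert\Lambda_{T}\mathds{1}_{A}g(Y_{n})\lvert\leq \lVert g\lVert_{\infty}\Lambda_{T}\in\mathcal{L}^{1}(\mathbb{P})$, deducing uniform integrability of the family from the Tool Lemma~$(\star)$, and passing to the limit in $\mathbb{E}^{\mathbb{Q}}[\mathds{1}_{A}g(Y_{n})]=\mathbb{E}^{\mathbb{P}}[\Lambda_{T}\mathds{1}_{A}g(Y_{n})]$; it then gives the same independence computation $\mathbb{Q}(A\cap B)=\mathbb{P}(B)\mathbb{Q}(A)$ that you perform on the product extension, and handles the reverse direction via Bayes' rule for $\widetilde{\Lambda}_{T}=\Lambda_{T}^{-1}$, exactly as your symmetry argument does (your observation $\mathbb{E}^{\mathbb{Q}}[\Lambda_{T}^{-1}]=1$ is the needed normalization). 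Your primary route — LeCam's third lemma applied to the augmented statistic $(Y_{n},Z)$, with the joint hypothesis $\big((Y_{n},Z),\Lambda_{T}\big)\xrightarrow{d}\big((Y,Z),\Lambda_{T}\big)$ supplied by stable convergence against the $\mathcal{F}_{T}$-measurable pair $(Z,\Lambda_{T})$, plus the compactly-truncated test function making $(y,z)\mapsto g(y)z$ bounded continuous — is thus the careful implementation of the corollary the paper only announces, and it is sound: contiguity is trivial for constant equivalent sequences, and identifying $\mathbb{E}^{\bar{\mathbb{P}}}[g(Y)Z\Lambda_{T}]=\mathbb{E}^{\bar{\mathbb{Q}}}[g(Y)Z]$ on the extension with $d\bar{\mathbb{Q}}/d\bar{\mathbb{P}}=\Lambda_{T}$ depending only on the $\Omega$-coordinate is precisely what preserves both the mixture representation and the independence of $\varphi$. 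Your closing truncation alternative ($\Lambda_{T}\wedge K$ with uniform tail control $\mathbb{E}^{\mathbb{P}}[\Lambda_{T}\mathds{1}_{\{\Lambda_{T}>K\}}]\to0$) is, in substance, the paper's actual UI argument made explicit — indeed it is slightly more careful, since the appendix proof applies uniform integrability to a bounded Borel $g$ without spelling out the distributional convergence of $g(Y_{n})\Lambda_{T}\mathds{1}_{A}$ that UI is meant to upgrade, a step that requires continuity of $g$ (or an approximation argument) and which your version makes visible. What each approach buys: the LeCam route outsources the unbounded-weight difficulty to cited machinery and makes the tilting identity transparent; the paper's direct domination argument is elementary and self-contained, isolating $\Lambda_{T}\in\mathcal{L}^{1}(\mathbb{P})$ together with $\mathcal{F}_{T}$-measurability as the only working ingredients.
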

\begin{proof}
    See Appendix~\ref{ccc}.
\end{proof}

\begin{remark}[Invariance of mixed normal limits under equivalent measures]
For a sequence stably converging to a mixed normal limit, both its consistency and its asymptotic (mixed normal) distribution are invariant under equivalent measures. Since $Y=U+V\varphi$ is a normal variance mixture, $U$ and $V$ are not necessarily deterministic; they may be stochastic and even admit different asymptotic behaviors under two equivalent measures. The mixed normal limit reduces to an ordinary normal law only when both $U$ and $V$ are constants. Moreover, the $\mathcal{F}_{T}$-measurability of $\Lambda_{T}$ ensures that the measure change only affects the "past" information contained in $\mathcal{F}_{T}$, without tilting the independent noise component in the mixed normal limit.

Specifically, suppose that $\theta$ is the parameter of interest (true value $\theta_{0}$), $U\in\mathbb{R},~V\in\mathbb{R}_{+}$. Let $\hat{\theta}_{n}$ be an estimator constructed under the probability measure $\mathbb{Q}$, and assume that the following asymptotic result (convergence rate $\delta_{n}$) has been established under $\mathbb{Q}$:
\begin{equation}\nonumber
    \sqrt{\delta_{n}}(\hat{\theta}_{n}-\theta_{0})\xrightarrow{d_{st}}\mathcal{N}(U,V)\qquad \text{as }n\to\infty.
\end{equation}
Since $\mathbb{P}$ is obtained from $\mathbb{Q}$ via a general Girsanov transformation, the Radon-Nikod\'{y}m density is automatically $\mathcal{F}_{T}$-measurable and belongs to $\mathcal{L}^{1}$. It then follows that the same sequence $\sqrt{\delta_{n}}(\hat{\theta}_{n}-\theta_{0})$ also converges stably in law to the identical law under $\mathbb{P}$. In particular, the consistency of $\hat{\theta}_{n}$ and the convergence rate $\sqrt{\delta_{n}}$ are unaffected by the change of measure. 
\end{remark}
\begin{lemma}[Selected Results for Stable Convergence in Law]\label{stable-slutsky}
Assume $Z_{n}\xrightarrow{d_{st}}Z$ as $n\to\infty$.
\begin{enumerate}[(i)]
    \item \textbf{Joint convergence with $\mathcal{F}$-measurable limits}: If $\zeta_{n}\xrightarrow{p}\zeta$, then $(Z_{n},\zeta_{n})\xrightarrow{d_{st}}(Z,\zeta)$.
    \item \textbf{Continuous mapping theorem}: If $g\in\mathcal{C}(\mathbb{R})$, then $g(Z_{n})\xrightarrow{d_{st}}g(Z)$.
    \item \textbf{Stable Slutsky's Theorem}: If $g\in\mathcal{C}(\mathbb{R}^{2})$, then $g(Z_{n},\zeta_{n})\xrightarrow{d_{st}}g(Z,\zeta)$. As a result, if $\zeta_{n}\xrightarrow{p}c\in\mathbb{R}$, then $Z_{n}+\zeta_{n}\xrightarrow{d_{st}}Z+c;~ Z_{n}\zeta_{n}\xrightarrow{d_{st}}cZ;~ Z_{n}/\zeta_{n}\xrightarrow{d_{st}}Z/c~(c\neq 0)$. If $\zeta_{n}\xrightarrow{p}\zeta$ with $\zeta$ being $\mathcal{F}$-measurable and bounded away from $0$, then $Z_{n}/\zeta_{n}\xrightarrow{d_{st}}Z/\zeta$.
\end{enumerate}
\end{lemma}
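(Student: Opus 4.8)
The plan is to reduce all three parts to the functional characterization of stable convergence: writing $\xrightarrow{d_{st}}$ for stable convergence with respect to $\mathcal{F}$, one has $Z_{n}\xrightarrow{d_{st}}Z$ if and only if $\mathbb{E}[f(Z_{n})\,\zeta]\to\mathbb{E}[f(Z)\,\zeta]$ for every bounded continuous $f$ and every bounded $\mathcal{F}$-measurable $\zeta$; equivalently, $(Z_{n},Y)\xrightarrow{d}(Z,Y)$ jointly for every $\mathcal{F}$-measurable random vector $Y$, the limit understood on the extension carrying $Z$. I would take this as the working definition and prove (i) first, since (ii) and (iii) then follow quickly.

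For (i), I would apply the joint form with the $\mathcal{F}$-measurable vector $(\zeta,W)$, where $W$ is an arbitrary bounded $\mathcal{F}$-measurable test variable and $\zeta$ is the ($\mathcal{F}$-measurable) limit of $\zeta_{n}$; this gives $(Z_{n},\zeta,W)\xrightarrow{d}(Z,\zeta,W)$. Since $\zeta_{n}-\zeta\xrightarrow{p}0$, the perturbation $(0,\zeta_{n}-\zeta,0)\xrightarrow{p}0$, so the ordinary multivariate Slutsky theorem upgrades this to $(Z_{n},\zeta_{n},W)\xrightarrow{d}(Z,\zeta,W)$. Reading this against test functions of the form $(z,y,w)\mapsto\phi(z,y)\,w$ — truncating $w$ so the test function stays bounded, which is harmless because $W$ is bounded — and letting $W$ range over all bounded $\mathcal{F}$-measurable variables recovers exactly the functional characterization of $(Z_{n},\zeta_{n})\xrightarrow{d_{st}}(Z,\zeta)$.

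Part (ii) is then immediate: if $f$ is bounded continuous and $g$ is continuous, then $f\circ g$ is again bounded continuous (note $g$ need not be bounded), so $\mathbb{E}[f(g(Z_{n}))\,\zeta]=\mathbb{E}[(f\circ g)(Z_{n})\,\zeta]\to\mathbb{E}[(f\circ g)(Z)\,\zeta]=\mathbb{E}[f(g(Z))\,\zeta]$ for every bounded $\mathcal{F}$-measurable $\zeta$, which is $g(Z_{n})\xrightarrow{d_{st}}g(Z)$. Part (iii) is the composition of (i) with the two-dimensional analogue of (ii): (i) yields $(Z_{n},\zeta_{n})\xrightarrow{d_{st}}(Z,\zeta)$, and the same one-line argument applied to a continuous $g:\mathbb{R}^{2}\to\mathbb{R}$ gives $g(Z_{n},\zeta_{n})\xrightarrow{d_{st}}g(Z,\zeta)$. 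The three displayed consequences come from specializing $g$ to $x+y$, $xy$, and $x/y$; when $\zeta_{n}\xrightarrow{p}c$ the limit is the constant $\zeta\equiv c$ and the right-hand sides collapse to $Z+c$, $cZ$, $Z/c$.

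The one step requiring care — and the \emph{main obstacle} — is the division map, since $g(x,y)=x/y$ is not continuous on all of $\mathbb{R}^{2}$. Here I would exploit the hypothesis that $\zeta$ is bounded away from $0$, say $|\zeta|\ge\delta$ a.s.: replace $x/y$ by a globally continuous $\tilde{g}$ agreeing with it on $\{|y|\ge\delta/2\}$ (for instance $\tilde{g}(x,y)=x\,\rho(y)$ with $\rho(y)=1/y$ for $|y|\ge\delta/2$ and $\rho$ linearly interpolated across $(-\delta/2,\delta/2)$), apply the continuous-mapping argument of (iii) to the continuous map $\tilde{g}$ to obtain $\tilde{g}(Z_{n},\zeta_{n})\xrightarrow{d_{st}}\tilde{g}(Z,\zeta)=Z/\zeta$, and finally observe that $\{Z_{n}/\zeta_{n}\ne\tilde{g}(Z_{n},\zeta_{n})\}\subseteq\{|\zeta_{n}|<\delta/2\}$ has probability tending to $0$ because $\zeta_{n}\xrightarrow{p}\zeta$ with $|\zeta|\ge\delta$; since the two sequences coincide with probability approaching one, they share the same stable limit. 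The constant case $c\ne0$ is the special instance $\zeta\equiv c$, and no extension subtleties intervene because every test variable and every interpolating limit appearing here is $\mathcal{F}$-measurable.
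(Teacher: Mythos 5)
Your proof is correct and follows essentially the same route as the paper: both arguments reduce stable convergence to joint convergence in law with an arbitrary (bounded) $\mathcal{F}$-measurable test variable, upgrade via the ordinary multivariate Slutsky theorem to get (i), and obtain (ii) and (iii) by composing with the classical continuous mapping theorem. The one place you go beyond the paper is the division map: the paper simply says ``choosing $h$ as \ldots division (with $\zeta\neq 0$)'' even though $(x,y)\mapsto x/y$ is not in $\mathcal{C}(\mathbb{R}^{2})$, whereas your patch---replacing $1/y$ by a globally continuous $\rho$ agreeing with it on $\{\lvert y\rvert\geq\delta/2\}$, using $\lvert\zeta\rvert\geq\delta$ a.s., and noting $Z_{n}/\zeta_{n}$ and $\tilde{g}(Z_{n},\zeta_{n})$ coincide with probability tending to one---is exactly the argument needed to make that step rigorous and is where the lemma's ``bounded away from $0$'' hypothesis actually gets used.
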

\begin{proof}
    See Appendix~\ref{ccc}.
\end{proof}

\section{An Initial Estimator for Elasticity for Data Mapping}
To map the observed CKLS data $\{\lambda_{i\Delta}\}_{i=0}^{n}$ of (\ref{CKLS-process}) (under the original measure $\mathbb{P}$) into the transformed/pseudo CIR data $\{X_{i\Delta}\}_{i=0}^{n}$ of (\ref{CIR-process}) (used under the equivalent measure $\mathbb{Q}$), we rely on the mapping (\ref{eq:T_solution}) $\mathcal{T}(\cdot\lvert k)$, which depends on the elasticity $k$. Consequently, exploiting the estimation-ready CIR data for inference on $k$ requires an initial estimate of $k$; without such an estimate, the mapping from $\{\lambda_{i\Delta}\}_{i=0}^{n}$ to $\{X_{i\Delta}\}_{i=0}^{n}$ cannot be carried out.

In this section, we adopt the estimator (denoted as $\hat{k}_{0,n}$) for the elasticity parameter proposed by \cite{mishura2022parameter} (see also \cite{lyu2025inference}). They establish strong consistency of $\hat{k}_{0,n}$. We strengthen their result by deriving its convergence rate (in probability) in the ultra-high-frequency setting. With $\hat{k}_{0,n}$, the mapped data are given by $X_{i\Delta}^{(\hat{k}_{0,n})}:=\mathcal{T}\big(\lambda_{i\Delta}\big\lvert\hat{k}_{0,n}\big)$, $i=0,\dots,n$.

\subsection{A Model-based Estimator by \texorpdfstring{\cite{mishura2022parameter}}{Mishura et al. (2022)} for Elasticity: A Brief Introduction}
By the definition of the (realized) quadratic variation of a stochastic process $\lambda_{t}$, we obtain the following result.
\begin{lemma}
The quadratic variation of the CKLS process $\lambda_{t}$ is defined as the limit of its realized counterpart as $n\to\infty$: $[\lambda]_{\tau}:=\lim_{n\to\infty}\sum_{i=1}^{n}\big(\lambda_{\frac{i}{n}\tau}-\lambda_{\frac{i-1}{n}\tau}\big)^{2}$, then
\begin{equation}\nonumber
	\frac{\log \frac{[\lambda]_{\tau+\Delta}-[\lambda]_{\tau}}{\sigma^{2}\Delta}}{2\log\lambda_{\tau}}\xrightarrow[\Delta\to0]{a.s.}k \qquad\text{ for any fixed }\tau\in[0,T]. 
\end{equation}
\end{lemma}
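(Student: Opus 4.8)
The plan is to reduce the claim to the explicit form of the quadratic variation of the CKLS diffusion, followed by an elementary pathwise limit. The decisive observation is that for a continuous semimartingale the quadratic variation coincides with that of its local-martingale part, so the finite-variation drift $(a-b\lambda_t)\,dt$ contributes nothing. Applying this to the CKLS SDE~(\ref{CKLS-process}), the realized sums defining $[\lambda]_\tau$ converge to
\[
    [\lambda]_\tau = \sigma^2 \int_0^\tau \lambda_s^{2k}\,ds ,
\]
which is an absolutely continuous function of $\tau$ with density $\sigma^2\lambda_\tau^{2k}$. I would take this integral representation as the working identity; everything downstream is then a deterministic, pathwise computation.

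With this in hand, the normalized increment is exactly a local average of a continuous integrand,
\[
    \frac{[\lambda]_{\tau+\Delta}-[\lambda]_\tau}{\sigma^2\Delta}
    = \frac{1}{\Delta}\int_\tau^{\tau+\Delta}\lambda_s^{2k}\,ds .
\]
Under Assumption~\ref{ass1} the paths of $\lambda$ are continuous and strictly positive almost surely (Lemma~\ref{ergodicity-of-CKLS}), so $s\mapsto\lambda_s^{2k}$ is continuous at $s=\tau$. The continuity-point case of Lebesgue's differentiation theorem (equivalently, the fundamental theorem of calculus applied to a continuous integrand) then gives, for almost every path,
\[
    \frac{1}{\Delta}\int_\tau^{\tau+\Delta}\lambda_s^{2k}\,ds
    \xrightarrow[\Delta\to 0]{} \lambda_\tau^{2k}.
\]

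The last step passes through the logarithm. Taking logs and using continuity of $\log$ on $(0,\infty)$,
\[
    \log\frac{[\lambda]_{\tau+\Delta}-[\lambda]_\tau}{\sigma^2\Delta}
    \xrightarrow[\Delta\to 0]{a.s.} \log\!\big(\lambda_\tau^{2k}\big) = 2k\log\lambda_\tau ,
\]
and dividing by $2\log\lambda_\tau$ yields the asserted limit $k$. Two null events must be excluded: $\{\lambda_\tau\le 0\}$, which is empty almost surely by strict positivity, and $\{\lambda_\tau=1\}$, on which the denominator vanishes; since $\lambda_\tau$ has an absolutely continuous law for the nondegenerate diffusion, $\mathbb{P}(\lambda_\tau=1)=0$, so the ratio is well defined on a set of full probability.

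I expect the only genuinely delicate point to be the justification of the quadratic-variation identity together with the handling of the two nested limits. The definition presents $[\lambda]_\tau$ as an $n\to\infty$ limit of realized sums, while the statement takes a separate $\Delta\to 0$ limit; I would stress that these are sequential rather than simultaneous, so no interchange is required---one first identifies $[\lambda]_\tau$ and $[\lambda]_{\tau+\Delta}$ as the integral above, and only afterward lets $\Delta\to 0$. The almost-sure (rather than in-probability) mode of the realized-variation convergence is the one ingredient needing care; I would either invoke a.s.\ convergence along the dyadic refinement, where it is classical, or simply adopt the integral representation as the definition of $[\lambda]_\tau$, after which the remainder is purely pathwise.
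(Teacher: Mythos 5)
Your proposal is correct and follows essentially the same route as the paper's proof: identify $[\lambda]_\tau=\sigma^2\int_0^\tau\lambda_s^{2k}\,ds$ via the It\^{o} quadratic-variation identity, recognize the normalized increment as a local average of the continuous integrand $s\mapsto\lambda_s^{2k}$ converging pathwise to $\lambda_\tau^{2k}$, take logarithms, and dispose of the null events $\{\lambda_\tau\le 0\}$ (by strict positivity under the parameter restriction) and $\{\lambda_\tau=1\}$ (since $\lambda_\tau$ has a Lebesgue density). Your closing remark on the a.s.\ mode of the realized-variation limit is a reasonable extra precaution, but the paper simply invokes the identity as you do and the two limits are indeed sequential, so nothing further is needed.
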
 
\begin{proof}
Under condition (\ref{prop:conditions2}), we have $\lambda_{t}>0~a.s.$ for any $t\in[0,T]$, and hence $\log\lambda_{\tau}$ is finite almost surely. Since $\lambda_{\tau}$ admits a (Lebesgue) density for any fixed $\tau\in[0,T]$, we have $\mathbb{P}(\lambda_{\tau}=1)=0$. As $\log\lambda_{\tau}=0$ if and only if $\lambda_{\tau}=1$, it follows that $\mathbb{P}(\log\lambda_{\tau}\neq 0)=1$. \\[0.5\baselineskip]
By the quadratic variation identity for It\^{o} integrals, $[\lambda]_{\tau}=\sigma^{2}\int_{0}^{\tau}\lambda_{s}^{2k}ds$.
As $t\mapsto\lambda_{t}$ is almost surely continuous at $t=\tau$, local averages over $[\tau,\tau+\Delta]$ converge to the point value:
\begin{equation}\label{eq:convergence a.s.}
	\frac{[\lambda]_{\tau+\Delta}-[\lambda]_{\tau}}{\Delta}=\frac{\sigma^{2}}{\Delta}\int_{\tau}^{\tau+\Delta}\lambda_{s}^{2k}ds\xrightarrow[\Delta\to0]{a.s.}\sigma^{2}\lambda_{\tau}^{2k}\quad\Longrightarrow\quad\log\frac{[\lambda]_{\tau+\Delta}-[\lambda]_{\tau}}{\sigma^{2}\Delta}\xrightarrow[\Delta\to0]{a.s.} 2k\log\lambda_{\tau}.\qedhere
\end{equation}
\renewcommand{\qed}{}
\end{proof}
\noindent Consider this convergence at two different fixed time points $\tau$ and $\tau'$ in $[0,T]$. The following alternative convergence allows us to cancel $\sigma$:
\begin{equation}\nonumber
	\frac{[\lambda]_{\tau+\Delta}-[\lambda]_{\tau}}{[\lambda]_{\tau'+\Delta}-[\lambda]_{\tau'}}=\frac{\sigma^{2}\int_{\tau}^{\tau+\Delta}\lambda^{2}_{s}ds}{\sigma^{2}\int_{\tau}^{\tau'+\Delta}\lambda_{s}^{2}ds}\xrightarrow[\Delta\to 0]{a.s.}\frac{\lambda_{\tau}^{2k}}{\lambda_{\tau'}^{2k}}\quad\Longrightarrow\quad\frac{\log\frac{[\lambda]_{\tau+\Delta}-[\lambda]_{\tau}}{[\lambda]_{\tau'+\Delta}-[\lambda]_{\tau'}}}{2\log\frac{\lambda_{\tau}}{\lambda_{\tau'}}}\xrightarrow[\Delta\to0]{a.s.}k.
\end{equation}
As an immediate corollary of the above analysis, we obtain the following two kinds of estimators proposed by \cite{mishura2022parameter}.

\begin{theorem}\label{initial-etimators}[\cite{mishura2022parameter}, \lbrack 
Proposition 5.1, Proposition 5.2\rbrack]\\
Denote the true value of $k$ by $k_{0}$. Having data observed at two fixed time points $\tau:=(i-1)\Delta$ and $\tau'=(j-1)\Delta$, we obtain the following two estimators of $k$ that both converge to $k_{0}$ almost surely:
\begin{equation}\nonumber
	\hat{k}_{0,n}^{(1)}:=\frac{\log\frac{[\lambda]_{i\Delta}-[\lambda]_{(i-1)\Delta}}{\sigma^{2}\Delta}}{2\log\lambda_{(i-1)\Delta}}\xrightarrow[\Delta\to0]{a.s.}k_{0},\qquad
    \hat{k}_{0,n}^{(2)}:=\frac{\log\frac{[\lambda]_{i\Delta}-[\lambda]_{(i-1)\Delta}}{[\lambda]_{j\Delta}-[\lambda]_{(j-1)\Delta}}}{2\log\frac{\lambda_{(i-1)\Delta}}{\lambda_{(j-1)\Delta}}}\xrightarrow[\Delta\to0]{a.s.}k_{0}.
\end{equation}
\end{theorem}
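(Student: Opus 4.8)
The plan is to treat Theorem~\ref{initial-etimators} as a direct corollary of the quadratic-variation asymptotics established in the lemma immediately above, rather than as a new computation. For $\hat{k}_{0,n}^{(1)}$ I would set $\tau=(i-1)\Delta$, so that $\tau+\Delta=i\Delta$, and read off from \eqref{eq:convergence a.s.} that its numerator $\log\frac{[\lambda]_{i\Delta}-[\lambda]_{(i-1)\Delta}}{\sigma^{2}\Delta}$ converges almost surely to $2k_{0}\log\lambda_{\tau}$; dividing by the denominator $2\log\lambda_{(i-1)\Delta}=2\log\lambda_{\tau}$ then gives the limit $k_{0}$. For $\hat{k}_{0,n}^{(2)}$ I would apply the ratio version displayed just before the theorem with $\tau=(i-1)\Delta$ and $\tau'=(j-1)\Delta$: since $\sigma^{2}$ cancels between the two quadratic-variation increments, the ratio converges almost surely to $\lambda_{\tau}^{2k_{0}}/\lambda_{\tau'}^{2k_{0}}$, and taking logarithms and dividing by $2\log(\lambda_{\tau}/\lambda_{\tau'})$ again returns $k_{0}$. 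The only bookkeeping point is that $\tau,\tau'$ are held fixed while the mesh shrinks, so the left endpoints $(i-1)\Delta,(j-1)\Delta$ either coincide with or converge to $\tau,\tau'$, and path continuity gives $\lambda_{(i-1)\Delta}\to\lambda_{\tau}$ and $\lambda_{(j-1)\Delta}\to\lambda_{\tau'}$.

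The analytic core I would make explicit is the passage from an increment of quadratic variation to a pointwise value. Starting from the It\^{o} identity $[\lambda]_{t}=\sigma^{2}\int_{0}^{t}\lambda_{s}^{2k_{0}}ds$ and the almost-sure continuity of $s\mapsto\lambda_{s}^{2k_{0}}$ (the paths of $\lambda$ are continuous and, by \eqref{prop:conditions2}, stay strictly positive on $[0,T]$, so $x\mapsto x^{2k_{0}}$ is evaluated at a positive continuous argument), the fundamental theorem of calculus for integrands continuous at $\tau$ yields $\Delta^{-1}\big([\lambda]_{\tau+\Delta}-[\lambda]_{\tau}\big)\to\sigma^{2}\lambda_{\tau}^{2k_{0}}$ as $\Delta\to0$ on the almost-sure event that the path is positive and continuous. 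Everything that remains is the continuous mapping theorem applied to $\log$ and to the ratio map, exactly as in the lemma; crucially, no martingale central limit theorem or ergodic average enters here, because both reference times are frozen and only $\Delta$ tends to zero.

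The step I expect to be the genuine obstacle is the almost-sure well-definedness of the two denominators, each of which is a logarithm that could in principle vanish. For $\hat{k}_{0,n}^{(1)}$ this is the requirement $\log\lambda_{\tau}\neq0$, i.e.\ $\lambda_{\tau}\neq1$, already settled in the lemma by noting that $\lambda_{\tau}$ has a Lebesgue density, whence $\mathbb{P}(\lambda_{\tau}=1)=0$. For $\hat{k}_{0,n}^{(2)}$ the analogous and subtler requirement is $\lambda_{\tau}\neq\lambda_{\tau'}$ at the two distinct fixed times $\tau\neq\tau'$, i.e.\ that the joint law of $(\lambda_{\tau},\lambda_{\tau'})$ assigns zero mass to the diagonal. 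I would obtain this from the nondegeneracy of the diffusion on $(0,+\infty)$: taking $\tau<\tau'$, the transition law of $\lambda$ over $[\tau,\tau']$ is absolutely continuous, so $\mathbb{P}(\lambda_{\tau'}=x\mid\lambda_{\tau}=x)=0$ for each $x$, and integrating against the (also absolutely continuous) marginal of $\lambda_{\tau}$ gives $\mathbb{P}(\lambda_{\tau}=\lambda_{\tau'})=0$. On the resulting almost-sure event both denominators are finite and nonzero, so the pathwise continuous-mapping argument of the first two paragraphs applies; following the convention adopted in the proof of Theorem~\ref{beta-theorem}, the excluded null set is simply discarded.
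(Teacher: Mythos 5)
Your proposal is correct and takes essentially the same route as the paper: the theorem is proved there, too, as an immediate corollary of the preceding lemma, using the identity $[\lambda]_{t}=\sigma^{2}\int_{0}^{t}\lambda_{s}^{2k}ds$, almost-sure path continuity at the frozen times $\tau=(i-1)\Delta$, $\tau'=(j-1)\Delta$, the continuous mapping theorem for $\log$ and ratios, and $\mathbb{P}(\lambda_{\tau}=1)=0$ from the Lebesgue density of $\lambda_{\tau}$ to secure the first denominator. Your explicit verification that $\mathbb{P}(\lambda_{\tau}=\lambda_{\tau'})=0$, via absolute continuity of the transition law integrated against the marginal of $\lambda_{\tau}$, addresses a point the paper leaves implicit for $\hat{k}_{0,n}^{(2)}$ and is a sound refinement rather than a deviation.
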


\begin{remark}[Technical advice on robustification near the boundary $\lambda_{\tau}\approx 1$]
Following \cite{mishura2022parameter} (see also \cite{lyu2025inference}), robust estimation of $k$ requires excluding time points $\tau$ with $\lambda_{\tau}\approx 1$ and pairs $(\tau,\tau')$ with $\lambda_{\tau}/\lambda_{\tau'}\approx 1$. In these cases the logarithmic terms in the denominators of the pointwise estimators become nearly zero, which amplifies numerical noise and produces unrobust estimates. It is therefore recommended to choose time points $\{\tau_{1},\dots,\tau_{m}\}$ for which $\lambda_{\tau_{s}}$ is bounded away from $1$ (e.g., choose some threshold $\varepsilon>0$ such that $\lvert 1-\lambda_{\tau_{s}}\lvert\geq\varepsilon$), and to use the aggregated statistics:
\begin{equation}\nonumber
	\tilde{k}_{0,n,m}^{(1)}:=\frac{\sum_{s=1}^{m}\Big\lvert\log\frac{[\lambda]_{\tau_{s}+\Delta}-[\lambda]_{\tau_{s}}}{\sigma^{2}\Delta}\Big\lvert}{2\sum_{s=1}^{m}\big\lvert\log\lambda_{\tau_{s}}\big\lvert},\qquad \tilde{k}_{0,n,m}^{(2)}:=\frac{\sum_{s=1}^{m}\Big\lvert\log\frac{[\lambda]_{\tau_{s}+\Delta}-[\lambda]_{\tau_{s}}}{[\lambda]_{\tau'_{s}+\Delta}-[\lambda]_{\tau'_{s}}}\Big\lvert}{2\sum_{s=1}^{m}\Big\lvert\log\frac{\lambda_{\tau_{s}}}{\lambda_{\tau'_{s}}}\Big\lvert}.
\end{equation}
By (\ref{eq:convergence a.s.}), both $\tilde{k}_{0,n,m}^{(1)}$ and $\tilde{k}_{0,n,m}^{(2)}$ converge almost surely to $k_{0}$ as $\Delta\to 0$. Compared with naive averaging of pointwise estimators, these aggregated estimators remain robust even when some $\lambda_{\tau_{s}}$ lie close to $1$, exhibiting better numerical performance in practical applications.
\end{remark}

\subsection{Convergence Rate of the Initial Estimator}
Next, we state the convergence rate of the initial estimator in the high-frequency setting, as this result will be essential for the analysis in the next section.
\begin{theorem}\label{k-rate}[Convergence rate of the initial elasticity estimator]\\
For the estimator defined in Theorem~\ref{initial-etimators}, $\ell=1,2$
\begin{equation}\nonumber
    \hat{k}_{0,n}^{(\ell)}-k_{0}=O_{p}(\Delta^{\frac{1}{2}}).
\end{equation}
If $n\Delta^{2}\to0$ is further assumed, since $\Delta^{\frac{1}{2}}/(n\Delta)^{-\frac{1}{2}}=n\Delta^{2}\to0$, then
\begin{equation}\nonumber
    \hat{k}_{0,n}^{(\ell)}-k_{0}=o_{p}\big((n\Delta)^{-\frac{1}{2}}\big).
\end{equation}
\end{theorem}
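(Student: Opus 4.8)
The plan is to substitute the quadratic-variation identity $[\lambda]_\tau = \sigma^2\int_0^\tau \lambda_s^{2k_0}\,ds$ from the preceding lemma into the estimators of Theorem~\ref{initial-etimators}, thereby reducing the rate question to a second-moment (variance) estimate for a short-window time average of $\lambda_s^{2k_0}$. For fixed $\tau=(i-1)\Delta$ write $A_\Delta := \frac{[\lambda]_{\tau+\Delta}-[\lambda]_\tau}{\sigma^2\Delta} = \frac{1}{\Delta}\int_\tau^{\tau+\Delta}\lambda_s^{2k_0}\,ds$. Since $\log A_\Delta = 2k_0\log\lambda_\tau + \log(A_\Delta/\lambda_\tau^{2k_0})$, the first estimator satisfies exactly
\begin{equation}\nonumber
    \hat{k}_{0,n}^{(1)}-k_0 = \frac{1}{2\log\lambda_\tau}\,\log\frac{A_\Delta}{\lambda_\tau^{2k_0}}.
\end{equation}
Because $\tau$ is fixed and $\mathbb{P}(\lambda_\tau=1)=0$ (shown in the proof of the preceding lemma), $1/\log\lambda_\tau$ is an almost-surely finite random variable, hence $O_p(1)$; the rate therefore hinges on showing $A_\Delta/\lambda_\tau^{2k_0}-1 = O_p(\Delta^{1/2})$, after which $\log(A_\Delta/\lambda_\tau^{2k_0}) = O_p(\Delta^{1/2})$ follows from $\log(1+x)=O(x)$.

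The core step is the estimate $A_\Delta-\lambda_\tau^{2k_0}=O_p(\Delta^{1/2})$. I would apply It\^{o}'s formula to $s\mapsto\lambda_s^{2k_0}$ on $[\tau,\tau+\Delta]$, obtaining $\lambda_s^{2k_0}-\lambda_\tau^{2k_0}=\int_\tau^s b_r\,dr+\int_\tau^s 2k_0\sigma\lambda_r^{3k_0-1}\,dW_r$, where $b_r$ collects the two drift contributions. Averaging over $s\in[\tau,\tau+\Delta]$ and dividing by $\Delta$, the drift part contributes $O_p(\Delta)$ (its integrand is bounded in $L^2$ over the window by the moment bounds of Lemma~\ref{ergodicity-of-CKLS}, and the double time integral carries a factor $\Delta$). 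For the martingale part, set $M_s:=\int_\tau^s 2k_0\sigma\lambda_r^{3k_0-1}\,dW_r$; then $\frac1\Delta\int_\tau^{\tau+\Delta}M_s\,ds$ has mean zero, and a stochastic-Fubini/It\^{o}-isometry computation gives $\mathbb{E}\big[(\frac1\Delta\int_\tau^{\tau+\Delta}M_s\,ds)^2\big]=\frac1{\Delta^2}\iint \mathbb{E}[M_s M_t]\,ds\,dt$, which to leading order equals $(2k_0\sigma)^2\,\mathbb{E}[\lambda_\tau^{6k_0-2}]\cdot\frac{\Delta}{3}=O(\Delta)$. Crucially, for $k_0\in[\tfrac12,1)$ the relevant exponents $3k_0-1\in[\tfrac12,2)$ and $6k_0-2\in[1,4)$ are positive, so only positive polynomial moments are needed and the finiteness supplied by Lemma~\ref{ergodicity-of-CKLS} suffices -- no negative moments of $\lambda$ enter. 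Hence the martingale part is $O_p(\Delta^{1/2})$, which dominates the $O_p(\Delta)$ drift, giving $A_\Delta-\lambda_\tau^{2k_0}=O_p(\Delta^{1/2})$ and, dividing by the a.s.-positive $\lambda_\tau^{2k_0}$, the claimed relative bound.

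Combining the reduction with the core estimate yields $\hat{k}_{0,n}^{(1)}-k_0=O_p(1)\cdot O_p(\Delta^{1/2})=O_p(\Delta^{1/2})$. For $\ell=2$ the argument is identical after writing $B_\Delta:=\frac1\Delta\int_{\tau'}^{\tau'+\Delta}\lambda_s^{2k_0}\,ds$ and decomposing $\log(A_\Delta/B_\Delta)=2k_0\log(\lambda_\tau/\lambda_{\tau'})+\log(A_\Delta/\lambda_\tau^{2k_0})-\log(B_\Delta/\lambda_{\tau'}^{2k_0})$, so that $\hat{k}_{0,n}^{(2)}-k_0$ equals $\{2\log(\lambda_\tau/\lambda_{\tau'})\}^{-1}$ times a difference of two $O_p(\Delta^{1/2})$ terms, the prefactor again being $O_p(1)$ since $\mathbb{P}(\lambda_\tau=\lambda_{\tau'})=0$. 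Finally, the refinement is a comparison of rates: if $n\Delta^2\to0$ then $\Delta^{1/2}/(n\Delta)^{-1/2}=(n\Delta^2)^{1/2}\to0$, so writing $\hat{k}_{0,n}^{(\ell)}-k_0=(X_n/\Delta^{1/2})\cdot(\Delta^{1/2}(n\Delta)^{1/2})\cdot(n\Delta)^{-1/2}$ with $X_n/\Delta^{1/2}=O_p(1)$ and the middle factor $\to0$ gives $\hat{k}_{0,n}^{(\ell)}-k_0=o_p((n\Delta)^{-1/2})$.

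The main obstacle is the sharp second-moment control in the core step: the earlier lemma only delivers almost-sure convergence $A_\Delta\to\lambda_\tau^{2k_0}$ from pathwise continuity, whereas here one must upgrade this to a genuine probabilistic rate, which forces the variance computation of the \emph{time-averaged} stochastic integral $\frac1\Delta\int_\tau^{\tau+\Delta}M_s\,ds$ and the verification that the averaging does not accelerate the decay below $\Delta^{1/2}$ (the $\Delta^3/3$ double-integral constant confirms the rate is exactly $\Delta^{1/2}$, not faster). Care is also needed to control the It\^{o} remainder and the drift uniformly over the short window using the uniform moment bounds, but these are routine once positivity of the exponents removes any need for negative moments of $\lambda$.
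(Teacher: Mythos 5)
Your proposal is correct, but your core quantitative step differs from the paper's. The reduction is identical in both: write the realized increment as $\sigma^{2}\int_{\tau}^{\tau+\Delta}\lambda_{s}^{2k_{0}}ds$, pull out $\log\lambda_{\tau}^{2k_{0}}$ (resp.\ $\log(\lambda_{\tau}/\lambda_{\tau'})^{2k_{0}}$), and treat the log-denominator as $O_{p}(1)$ via $\mathbb{P}(\lambda_{\tau}=1)=0$ — the paper is exactly as informal as you are on this point. Where you diverge is in proving $\frac{1}{\Delta}\int_{\tau}^{\tau+\Delta}\lambda_{s}^{2k_{0}}ds-\lambda_{\tau}^{2k_{0}}=O_{p}(\Delta^{1/2})$: the paper never applies It\^{o} to the power process; instead it first establishes uniform $1/2$-H\"{o}lder continuity of $\lambda_{t}$ in $\mathcal{L}^{q}$ (BDG on the SDE increments plus sup-moment bounds from Lemma~\ref{ergodicity-of-CKLS}), transfers it to $\lambda_{t}^{2k}$ in $\mathcal{L}^{1}$ via the mean-value theorem and Cauchy--Schwarz (using $4k-2\in[0,1)$ moments), integrates the pointwise bound to get $r_{\tau}(\Delta)=O_{p}(\Delta^{3/2})$, and handles $k=1/2$ in a separate step through CIR regularity. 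You instead expand $s\mapsto\lambda_{s}^{2k_{0}}$ by It\^{o}, bound the drift contribution by $O_{p}(\Delta)$, and compute the second moment of the time-averaged martingale exactly, $\frac{1}{\Delta^{2}}\iint\mathbb{E}[M_{s}M_{t}]\,ds\,dt\approx(2k_{0}\sigma)^{2}\mathbb{E}[\lambda_{\tau}^{6k_{0}-2}]\Delta/3$; your exponent bookkeeping ($3k_{0}-1\in[\frac12,2)$, $6k_{0}-2\in[1,4)$, so no negative moments enter) is correct. Your route is sharper — it identifies the leading constant and shows the rate $\Delta^{1/2}$ is exact, and it treats all $k_{0}\in[\frac12,1)$ uniformly (the case $k_{0}=\frac12$ is trivial since $x^{2k_{0}}=x$), whereas the paper's delivers only an upper bound but produces a reusable uniform H\"{o}lder estimate that the paper later recycles as hypothesis (A1) of Theorem~\ref{theorem-Riemann} and in the Riemann-sum lemma. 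One point you should make explicit: for $k_{0}\in(\frac12,1)$ the map $x\mapsto x^{2k_{0}}$ has $f''(x)=2k_{0}(2k_{0}-1)x^{2k_{0}-2}$ singular at the origin, so your It\^{o} step needs a localization argument resting on the strict positivity of $\lambda_{t}$ guaranteed by Assumption~\ref{ass1} — this is a routine fix, not a gap, and is precisely why the regime $k=\frac12$ with $2a<\sigma^{2}$ was excluded. Finally, your rate comparison $\Delta^{1/2}/(n\Delta)^{-1/2}=(n\Delta^{2})^{1/2}\to0$ is the correct computation; the paper's statement of it ($=n\Delta^{2}$) contains a harmless exponent typo with the same conclusion.
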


\begin{proof}
Recall the convergence in (\ref{eq:convergence a.s.}), we define the convergence error by
\begin{equation}\nonumber
	r_{\tau}(\Delta):=[\lambda]_{\tau+\Delta}-[\lambda]_{\tau}-\sigma^{2}\lambda_{\tau}^{2k}\Delta=\sigma^{2}\int_{\tau}^{\tau+\Delta}\lambda_{s}^{2k}ds-\sigma^{2}\lambda_{\tau}^{2k}\Delta=\sigma^{2}\int_{\tau}^{\tau+\Delta}(\lambda_{s}^{2k}-\lambda_{\tau}^{2k})ds.
\end{equation}
If we can establish the uniform $1/2$-H\"{o}lder continuity in probability of $t\mapsto\lambda_{t}^{2k}$:
\begin{equation}\nonumber
    \lvert \lambda_{s}^{2k}-\lambda_{\tau}^{2k}\lvert=O_{p}(\lvert s-\tau\lvert^{\frac{1}{2}})=O_{p}(\Delta^{\frac{1}{2}})\quad \text{for any $\tau\in[0,T]$, $s\in[\tau,\tau+\Delta]\subset [0,T]$},
\end{equation}
then the following will hold
\begin{equation}\nonumber
    r_{\tau}(\Delta)=\int_{\tau}^{\tau+\Delta}O_{p}(\lvert s-\tau\lvert^{\frac{1}{2}})ds=O_{p}(\Delta^{\frac{3}{2}})\quad\Longrightarrow\quad[\lambda]_{\tau+\Delta}-[\lambda]_{\tau}=\sigma^{2}\lambda_{\tau}^{2k}\Delta+O_{p}(\Delta^{\frac{3}{2}}).\nonumber
\end{equation}
This yields
\begin{equation}\label{error v}
    \frac{[\lambda]_{\tau+\Delta}-[\lambda]_{\tau}}{\sigma^{2}\Delta}=\lambda_{\tau}^{2k}+O_{p}(\Delta^{\frac{1}{2}}),~
    \frac{[\lambda]_{\tau+\Delta}-[\lambda]_{\tau}}{[\lambda]_{\tau'+\Delta}-[\lambda]_{\tau'}}=\frac{\lambda_{\tau}^{2k}\Delta\big(1+O_{p}(\Delta^{\frac{1}{2}})\big)}{\lambda_{\tau'}^{2k}\Delta\big(1+O_{p}(\Delta^{\frac{1}{2}})\big)}\overset{\ast}{=}\frac{\lambda_{\tau}^{2k}}{\lambda_{\tau'}^{2k}}\Big(1+O_{p}(\Delta^{\frac{1}{2}})\Big),
\end{equation}
where $\ast$ holds because for $a_{n}=O_{p}(\Delta^{\frac{1}{2}})$ and $b_{n}=O_{p}(\Delta^{\frac{1}{2}})$, thus we have $b_{n}\to 0$ in probability, so $\frac{1+a_{n}}{1+b_{n}}=(1+a_{n})\big(1-b_{n}+O_p(\Delta)\big)=1+a_{n}-b_{n}+O_{p}(\Delta)=1+O_{p}(\Delta^{\frac{1}{2}})$.\\[0.5\baselineskip]
\noindent On one hand, for the former equation in (\ref{error v}), we have
\begin{align}\nonumber
    &\log\frac{[\lambda]_{\tau+\Delta}-[\lambda]_{\tau}}{\sigma^{2}\Delta}=\log\lambda_{\tau}^{2k}+\log\Big(O_{p}(\Delta^{\frac{1}{2}})\Big)=\log\lambda_{\tau}^{2k}+\log\Big(1+\frac{O_{p}(\Delta^{\frac{1}{2}})}{\lambda_{\tau}^{2k}}\Big)=2k\log\lambda_{\tau}+O_{p}(\Delta^{\frac{1}{2}}).\\
    \Longrightarrow&\quad\hat{k}_{0,n}^{(1)}:=\frac{\log\frac{[\lambda]_{i\Delta}-[\lambda]_{(i-1)\Delta}}{\sigma^{2}\Delta}}{2\log\lambda_{(i-1)\Delta}}=\frac{2k_{0}\log\lambda_{(i-1)\Delta}+O_{p}(\Delta^{\frac{1}{2}})}{2\log\lambda_{(i-1)\Delta}}=k_{0}+O_{p}(\Delta^{\frac{1}{2}}).\nonumber
\end{align}
On the other hand, for the latter equation in (\ref{error v}), we have
\begin{align}\nonumber
    &\log\frac{[\lambda]_{\tau+\Delta}-[\lambda]_{\tau}}{[\lambda]_{\tau'+\Delta}-[\lambda]_{\tau'}}=\log\frac{\lambda_{\tau}^{2k}}{\lambda_{\tau'}^{2k}}+\log\Big(1+O_{p}(\Delta^{\frac{1}{2}})\Big)=2k\log\frac{\lambda_{\tau}}{\lambda_{\tau'}}+O_{p}(\Delta^{\frac{1}{2}}).\\
    \Longrightarrow&\quad\hat{k}_{0,n}^{(2)}:=\frac{\log\frac{[\lambda]_{i\Delta}-[\lambda]_{(i-1)\Delta}}{[\lambda]_{j\Delta}-[\lambda]_{(j-1)\Delta}}}{2\log\frac{\lambda_{(i-1)\Delta}}{\lambda_{(j-1)\Delta}}}=\frac{2k_{0}\log\frac{\lambda_{(i-1)\Delta}}{\lambda_{(j-1)\Delta}}+O_{p}(\Delta^{\frac{1}{2}})}{2\log\frac{\lambda_{(i-1)\Delta}}{\lambda_{(j-1)\Delta}}}=k_{0}+O_{p}(\Delta^{\frac{1}{2}}).\nonumber
\end{align}
\textit{Step~1 (Uniform $1/2$-H\"{o}lder continuity in $\mathcal{L}^{q}$ and probability of the process $\lambda_{t}$)}\\
For $\tau\in [0,T]$ and $s\in[\tau,\tau+\Delta]\subset [0,T]$, we have
\begin{equation}\nonumber
    \lambda_{s}-\lambda_{\tau}=\int_{\tau}^{s}(a-b\lambda_{u})du+\int_{\tau}^{s}\sigma\lambda_{u}^{k}dW_{u}.
\end{equation}
Denote by $U_{\tau,s}:=\int_{\tau}^{s}(a-b\lambda_{u})du$ and $V_{\tau,s}:=\int_{\tau}^{s}\sigma\lambda_{u}^{k}dW_{u}$ (a continuous square-integrable martingale with $V_{\tau,s}\lvert_{\tau=s}=0$). Set $S:=\sup_{u\in[0,T]}\lambda_{u}$. By Lemma~\ref{ergodicity-of-CKLS}'s (\romannumeral1), $S<\infty$. With
\begin{equation}\label{inequalities}
(\lvert a\lvert+\lvert b\lvert S)^{q}\leq
\begin{cases}
	\lvert a\lvert^{q}+\lvert b\lvert^{q}S^{q}, & 0<q<1,\\[2mm]
	2^{q-1}\big(\lvert a\lvert^{q}+\lvert b\lvert^{q}S^{q}\big), & q\geq 1,
\end{cases}\quad\Longrightarrow\quad(\lvert a\lvert+\lvert b\lvert S)^{q}\lesssim \lvert a\lvert^{q}+\lvert b\lvert^{q}S^{q}.
\end{equation}
hence there exists $K_{1}\in(0,+\infty)$ such that $\mathbb{E}[(\lvert a\lvert+\lvert b\lvert S)^{q}]\leq K_{1}(1+\mathbb{E}[S^{q}])$. Since $S^{q}=(\sup_{u\in[0,T]}\lambda_{u})^{q}= \sup_{u\in[0,T]}\lambda_{u}^{q}$, by Lemma~\ref{ergodicity-of-CKLS}'s (\romannumeral3), $\mathbb{E}[S^{q}]<\infty$. Therefore,
\begin{equation}\nonumber
    \lvert U_{\tau,s}\lvert\leq\big(\lvert a\lvert+\lvert b\lvert \sup_{u\in[\tau,s]}\lambda_{u}\big)\lvert s-\tau\lvert \quad\Longrightarrow\quad \mathbb{E}\big[\lvert U_{\tau,s}\lvert^{q}\big]\leq K_{1}\lvert s-\tau\lvert^{q}\quad\Longrightarrow\quad\lvert U_{\tau,s}\lvert=O_{p}(\lvert s-\tau\lvert).
\end{equation}
For a continuous local martingale $M_{t}$ with $M_{0}=0$ and any $q>0$,
\begin{equation}\label{bdg}
	\text{(BDG's inequality)}~~~\mathbb{E}\big[\lvert M_{T}\lvert^{q}\big]\leq\mathbb{E}\big[(\sup_{0\leq t\leq T}\lvert M_{t}\lvert)^{q}\big]\leq C_{q}\mathbb{E}\big[[M]_{T}^{q/2}\big],
\end{equation}
since $\lvert M_{T}\lvert\leq \sup_{0\leq t\leq T}\lvert M_{t}\lvert$. When $M_{t}$ takes the form $\int_{0}^{t}H_{u}dW_{u}$ for some predictable $H_{t}$ with $\int_{0}^{\infty}H_{s}^{2}ds<\infty~a.s.$, its quadratic variation $[M]_{t}$ is $\int_{0}^{t}H_{u}^{2}du$ by It\^{o}'s isometry. Applying (\ref{bdg}) to the increments $\widetilde{V}_{t}:=\int_{\tau}^{\tau+t}\sigma\lambda_{u}^{k}dW_{u}$ yields
\begin{equation}\label{upperbound}
	\mathbb{E}\big[\lvert V_{\tau,s}\lvert^{q}\big]=\mathbb{E}\big[\lvert\widetilde{V}_{s-\tau}\lvert^{q}\big]\leq C_{q}\mathbb{E}\big[[V]_{s}-[V]_{\tau}\big]^{q/2}\quad\Longrightarrow\quad\mathbb{E}\big[\lvert V_{\tau,s}\lvert^{q}\big] \leq C_{q}\mathbb{E}\Big[\Big(\int_{\tau}^{s}\sigma^{2} \lambda_{u}^{2k}du\Big)^{q/2}\Big].
\end{equation}
\textit{Case I:} For $q\geq 2$ ($r:=q/2\geq 1$), H\"{o}lder's inequality with $(r,\frac{r}{r-1})$ yields
\begin{equation}\nonumber
	\Big(\int_{\tau}^{s}\sigma^{2}\lambda_{u}^{2k}du\Big)^{q/2}\leq \lvert s-\tau\lvert^{q/2-1}\int_{\tau}^{s}\sigma^{q}\lambda_{u}^{kq}du .
\end{equation}
Taking expectations and applying Tonelli's theorem (since the integrand $\sigma^{2}\lambda_{u}^{2k}$ is nonnegative),
\begin{equation}\nonumber
    \mathbb{E}\Big[\Big(\int_{\tau}^{s}\sigma^{2}\lambda_{u}^{2k}du\Big)^{q/2}\Big]\leq \sigma^{q}\lvert s-\tau\lvert^{q/2-1}\int_{\tau}^{s}\mathbb{E}[\lambda_{u}^{kq}]du\leq \sigma^{q}\sup_{0\leq u\leq T}\mathbb{E}[\lambda_{u}^{kq}]\lvert s-\tau\lvert^{q/2}.
\end{equation}
Set a positive $K_{2}:=\sigma^{q}\sup_{0\leq u\leq T}\mathbb{E}[\lambda_{u}^{kq}]<\infty$ (by Lemma~\ref{ergodicity-of-CKLS}'s (\romannumeral3)), then (\ref{upperbound}) becomes
\begin{equation}\nonumber
	\mathbb{E}\big[\lvert V_{\tau,s}\lvert^{q}\big]\leq C_{q}\mathbb{E}\Big[\Big(\int_{\tau}^{s}\sigma^{2}\lambda_{u}^{2k}du\Big)^{q/2}\Big]\leq K_{2}\lvert s-\tau\lvert^{q/2}.
\end{equation}
\textit{Case II:} For $0<q\leq 2$, set a positive $K_{2}':=\sigma^{2}\sup_{0\leq u\leq T}\mathbb{E}[\lambda_{u}^{2k}]<\infty$ (by Lemma~\ref{ergodicity-of-CKLS}'s (\romannumeral3)). It\^{o} isometry and Lyapunov's monotonicity of $\mathcal{L}_{q}$-norms (passing from order $2$ to $q$) yields
\begin{equation}\nonumber
	\mathbb{E}\big[\lvert V_{\tau,s}\lvert^{2}\big]=\sigma^{2}\int_{\tau}^{s}\mathbb{E}\big[\lambda_{u}^{2k}\big]du\leq K_{2}'\lvert s-\tau\lvert\quad\Longrightarrow\quad\mathbb{E}\big[\lvert V_{\tau,s}\lvert^{q}\big]\leq \big(\mathbb{E}\big[\lvert V_{\tau,s}\lvert^{2}\big]\big)^{q/2}\leq K_{2}'\lvert s-\tau\lvert^{q/2}.
\end{equation}
Combining \textit{Case I} and \textit{Case II}, for any $q>0$, let $\mathbb{R}_{+}\ni K^{*}:=\max\{K_{2},K_{2}'\}$, then
\begin{equation}\nonumber
    \mathbb{E}\big[\lvert V_{\tau,s}\lvert^{q}\big]\leq K^{*} \lvert s-\tau\lvert^{q/2}\qquad\Longrightarrow\qquad
    \lvert V_{\tau,s}\lvert=O_{p}(\lvert s-\tau\lvert^{\frac{1}{2}}).
\end{equation}
Since $V_{\tau,s}$ is dominant over $U_{\tau,s}$ when $\lvert s-\tau\lvert\to0$, using inequalities (\ref{inequalities}) again, we conclude: For any $q>0$, there exists $C:=\max\{C_{1},C_{2}\}$ independent of $\tau,s$ such that ($0<q\leq 1$ and $q>1$, respectively)
\begin{equation}\label{uniform-rate}
\begin{aligned}
    &\mathbb{E}\big[\lvert \lambda_{s}-\lambda_{\tau}\lvert^{q}\big]\leq 
\begin{cases}
\mathbb{E}\big[\lvert U_{\tau,s}\lvert^{q}\big]+\mathbb{E}\big[\lvert V_{\tau,s}\lvert^{q}\big]\leq C_{1}\lvert s-\tau\lvert^{q/2} & 0<q\leq 1,\\
2^{q-1}\big(\mathbb{E}\big[\lvert U_{\tau,s}\lvert^{q}\big]+\mathbb{E}\big[\lvert V_{\tau,s}\lvert^{q}\big]\big)\leq C_{2}\lvert s-\tau\lvert^{q/2}, & q>1.
\end{cases}\\
    \Longrightarrow&\quad
    \lvert\lambda_{s}-\lambda_{\tau}\lvert=O_{p}(\lvert s-\tau\lvert^{\frac{1}{2}}).
\end{aligned}
\end{equation}
The above estimate holds for any $\tau\in[0,T]$ and $s\in[\tau,\tau+\Delta]\subset[0,T]$, with a constant $C$ independent of $\tau$ and $s$. Hence, for any $\tau, s\in[0,T]$, $\lambda_{t}$ is uniform $1/2$-H\"{o}lder continuous in $\mathcal{L}^{q}$ and also in probability.\\[0.5\baselineskip]
\noindent\textit{Step~2 (Uniform $1/2$-H\"{o}lder continuity in $\mathcal{L}^{1}$ and probability of the process $\lambda_{t}^{2k}$)}\\
Let $f(x)=x^{2k}$. For $y>x> 0$, with $\xi\in(x,y)$, mean-value theorem yields
\begin{equation}\nonumber
	\lvert f(x)-f(y)\lvert=\lvert f'(\xi)\lvert \lvert x-y\lvert=2k\xi^{2k-1}\lvert x-y\lvert \leq 2k(x^{2k-1}+y^{2k-1})\lvert x-y\lvert.
\end{equation}
With $x=\lambda_{\tau}$, $y=\lambda_{s}$ and the inequalities Cauchy--Schwarz and $(x+y)^{2}\leq 2(x^{2}+y^{2})$:
\begin{align}\nonumber
	&\mathbb{E}\big[\lvert\lambda_{s}^{2k}-\lambda_{\tau}^{2k}\lvert\big] \leq 2k\mathbb{E}\big[(\lambda_{s}^{2k-1}+\lambda_{\tau}^{2k-1})\lvert\lambda_{s}-\lambda_{\tau}\lvert\big] \\
    \leq & 2k\Big(\mathbb{E}\big[(\lambda_{s}^{2k-1}+\lambda_{\tau}^{2k-1})^{2}\big]\Big)^{\frac{1}{2}}\Big(\mathbb{E}\big[\lvert\lambda_{s}-\lambda_{\tau}\lvert^{2}\big]\Big)^{\frac{1}{2}}\leq 2^{\frac{3}{2}}k\Big(\mathbb{E}\big[\lambda_{s}^{4k-2}+\lambda_{\tau}^{4k-2}\big]\Big)^{\frac{1}{2}}\Big(\mathbb{E}\big[\lvert\lambda_{s}-\lambda_{\tau}\lvert^{2}\big]\Big)^{\frac{1}{2}}\label{power-process}.
\end{align}
As $k\in[\frac{1}{2},1)\Rightarrow 4k-2\in[0,1)$, by Lemma~\ref{ergodicity-of-CKLS}'s (\romannumeral3):
\begin{equation}\nonumber
	\sup_{u\in[0,T]}\mathbb{E}[\lambda_{u}^{4k-2}]<\infty\qquad\Longrightarrow\qquad\sup_{\tau,s\in[0,T]}\mathbb{E}\big[\lambda_{s}^{4k-2}+\lambda_{\tau}^{4k-2}\big]<\infty.\tag{$\ast$}
\end{equation}
By (\ref{uniform-rate}) we also have the quadratic increment bound
\begin{equation}\nonumber
	\quad\mathbb{E}\big[\lvert\lambda_{s}-\lambda_{\tau}\lvert^{2}\big]\leq C \lvert s-\tau\lvert\quad\text{for all }\tau,s\in[0,T].\tag{$\circ$}
\end{equation}
Combining $(\ast)$ and $(\circ)$ for (\ref{power-process}) yields uniform $1/2$-H\"{o}lder continuity in $\mathcal{L}^{1}$ and also in probability
\begin{equation}\nonumber
	\mathbb{E}\big[\lvert\lambda_{s}^{2k}-\lambda_{\tau}^{2k}\lvert\big]\leq C\lvert s-\tau\lvert^{\frac{1}{2}},\text{ for any }\tau,s\in[0,T]\quad \Rightarrow\quad\lvert\lambda_{s}^{2k}-\lambda_\tau^{2k}\lvert=O_{p}(\lvert s-\tau\lvert ^{\frac{1}{2}}),\text{ for any }\tau,s\in[0,T].
\end{equation}
\textit{Step~$\ast$ (Regularity simplification for CIR case)}
When $k=\frac{1}{2}$, Lemma~\ref{ergodicity-of-CIR}'s (\romannumeral6) shows that $\lambda_{t}$ is uniform $1/2$-H\"{o}lder continuity in $\mathcal{L}^{q}$ on $[0,T]$, so \textit{Step~1} follows immediately. Now that $2k=1$ and $x\mapsto x^{2k}=x$ is globally Lipschitz, \textit{Step~2} is trivial: $\lvert\lambda_{s}-\lambda_{\tau}\lvert=O_{p}(\lvert s-\tau\lvert^{\frac{1}{2}})$.
\end{proof}

\begin{remark}[Upgrade to uniform almost sure convergence rate]
The uniform $1/2$-H\"{o}lder continuity in $\mathcal{L}^q$ for some $q \geq 1$ does not imply the uniform almost sure estimate $\lvert \lambda_{s}-\lambda_{\tau}\lvert^{q}=O_{a.s.}(\lvert s-\tau\lvert^{q/2})$. This failure is due to the requirements of the Kolmogorov-Chentsov continuity theorem, which necessitates an exponent $1+\eta>q/2$ (i.e., $\eta>0$) in the moment condition to guarantee almost sure $\delta$-H\"{o}lder paths. When the exponent is exactly $q/2$, the theorem only guarantees $\delta$-H\"{o}lder continuity for $\delta<\frac{1}{2}$ (specifically, $\delta<\frac{1}{2}-\frac{1}{q}$ when $q>2$). Consequently, the ratio $\frac{\lvert\lambda_{s}-\lambda_{\tau}\lvert}{\lvert s-\tau\lvert^{1/2}}$ remains almost surely unbounded as $\lvert s-\tau\lvert\to 0$.
\end{remark}

\section{Asymptotic Negligibility of the Plug-in Error}
\subsection{Plug-in Version Estimator of \texorpdfstring{$\beta$}{beta} and Asymptotic Error Decomposition}
Using either of $\hat{k}_{0,n}^{(\ell)}$, $\ell\in\{1,2\}$ given in Theorem~\ref{initial-etimators}, together with the explicit form of the mapping \eqref{eq:T_solution}, the observed CKLS data, $i=0,\dots,n$, can be mapped into 
the transformed/pseudo CIR data via the following formula
\[
    X_{i\Delta}^{(\hat{k}_{0,n})}:=\mathcal{T}\big(\lambda_{i\Delta}\big\lvert\hat{k}_{0,n}\big)=\frac{L^{2}}{4(1-\hat{k}_{0,n})^{2}}\lambda_{i\Delta}^{2-2\hat{k}_{0,n}}.
\]
Denote the estimator of $\beta$ defined in \eqref{beta-form} computed from the transformed/pseudo data $\{X_{i\Delta}^{(\hat{k}_{0,n})}\}_{i=0}^{n}$ by  $\hat{\beta}_{n}^{(\hat{k}_{0,n})}$, and note that $\beta_{0}^{(k_{0})}\equiv\beta_{0}$. Assume a known $\sigma$, substituting $\gamma$ with $\sigma L$ and the explicit form of $\mathcal{T}(\cdot\lvert\hat{k}_{0,n})$ yields
\begin{align}\nonumber
    \hat{\beta}_{n}^{(\hat{k}_{0,n})}:=&\frac{\frac{\sigma^{2}L^{2}}{2}\cdot n\sum_{i=1}^{n}\frac{L^{2}}{4(1-\hat{k}_{0,n})^{2}}\lambda_{(i-1)\Delta}^{2-2\hat{k}_{0,n}}}{n\sum_{i=1}^{n}\frac{L^{4}}{16(1-\hat{k}_{0,n})^{4}}\lambda_{(i-1)\Delta}^{4-4\hat{k}_{0,n}}-\Big(\sum_{i=1}^{n}\frac{L^{2}}{4(1-\hat{k}_{0,n})^{2}}\lambda_{(i-1)\Delta}^{2-2\hat{k}_{0,n}}\Big)^{2}}\\
    =&\frac{2\sigma^{2}(1-\hat{k}_{0,n})^{2}\cdot n\sum_{i=1}^{n}\lambda_{(i-1)\Delta}^{2-2\hat{k}_{0,n}}}{n\sum_{i=1}^{n}\lambda_{(i-1)\Delta}^{4-4\hat{k}_{0,n}}-\Big(\sum_{i=1}^{n}\lambda_{(i-1)\Delta}^{2-2\hat{k}_{0,n}}\Big)^{2}}.\label{final-estimation}
\end{align}
Next, given $\sqrt{n\Delta}(\hat{\beta}_{n}^{(k_{0})}-\beta_{0}^{(k_{0})})\xrightarrow{d_{st}}\mathcal{N}\big(0,\frac{2\beta_{0}}{\alpha}(\alpha+\gamma^{2})\big)$ as $n\to\infty$, to determine if the stable CLT
\begin{equation}\nonumber
    \sqrt{n\Delta}(\hat{\beta}_{n}^{(\hat{k}_{0,n})}-\beta_{0}^{(k_{0})})\xrightarrow{d_{st}}\mathcal{N}\Big(0,\frac{2\beta_{0}}{\alpha}(\alpha+\gamma^{2})\Big)\qquad\text{as }n\to\infty,
\end{equation}
continues to hold when $k_{0}$ is replaced by the estimated value $\hat{k}_{0,n}$, where $\hat{\beta}_{n}^{(k_{0})}$ coincides with the estimator of $\beta$ defined in \eqref{beta-form}, is the crux. Observe the decomposition
\begin{equation}\nonumber
    \sqrt{n\Delta}(\hat{\beta}_{n}^{(\hat{k}_{0,n})}-\beta_{0}^{(k_{0})})=\underbrace{\sqrt{n\Delta}(\hat{\beta}_{n}^{(\hat{k}_{0,n})}-\hat{\beta}_{n}^{(k_{0})})}_{\text{plug-in error}}+\underbrace{\sqrt{n\Delta}(\hat{\beta}_{n}^{(k_{0})}-\beta_{0}^{(k_{0})})}_{\text{stable convergence in law}}.
\end{equation}
Thus, it remains only to show that the plug-in error is asymptotically negligible.

\subsection{First-order Taylor's Expansion with Lagrange Remainder}
With $X_{(i-1)\Delta}^{(k)}=\frac{L^{2}}{4(1-k)^{2}}\lambda_{(i-1)\Delta}^{2-2k}$, regard the estimator $\hat{\beta}_{n}^{(k)}$ as a function with respect to elasticity:
\begin{align}\nonumber
    &f_{n}(k):=\hat{\beta}_{n}^{(k)}=\frac{\gamma^{2}}{2}\frac{n\sum_{i=1}^{n}X_{(i-1)\Delta}}{n\sum_{i=1}^{n}X_{(i-1)\Delta}^{2}-(\sum_{i=1}^{n}X_{(i-1)\Delta})^{2}}=:\frac{\gamma^{2}}{2}\frac{A_{n}(k)}{B_{n}(k)-A_{n}^{2}(k)},\\
    &\text{with}~A_{n}(k)=\frac{L^{2}}{4(1-k)^{2}}\frac{1}{n}\sum_{i=1}^{n}\lambda_{(i-1)\Delta}^{2-2k}\text{ and }
    B_{n}(k)=\frac{L^{4}}{16(1-k)^{4}}\frac{1}{n}\sum_{i=1}^{n}\lambda_{(i-1)\Delta}^{4-4k}.\label{AB}
\end{align}
\noindent Applying First-order Taylor's expansion with Lagrange remainder to $f_{n}(k)\equiv\hat{\beta}_{n}^{(k)}$ at $k_{0}$ yields
\begin{equation}\nonumber
	\sqrt{n\Delta}(\hat{\beta}_{n}^{(k)}-\hat{\beta}_{n}^{(k_{0})})\equiv\sqrt{n\Delta}\big(f_{n}(k)-f_{n}(k_{0})\big)=\sqrt{n\Delta}f'_{n}(k_{0})(k-k_{0})+\sqrt{n\Delta}\frac{1}{2}f''_{n}(\tilde{k})(k-k_{0})^{2},
\end{equation}
with $\tilde{k}\in(k_{0},k)$. Imposing $k=\hat{k}_{0,n}$ yields the following expression:
\begin{equation}\nonumber
	\sqrt{n\Delta}(\hat{\beta}_{n}^{(\hat{k}_{0,n})}-\hat{\beta}_{n}^{(k_{0})})=\sqrt{n\Delta}f'_{n}(k_{0})(\hat{k}_{0,n}-k_{0})+\sqrt{n\Delta}\frac{1}{2}f''_{n}(\tilde{k})(\hat{k}_{0,n}-k_{0})^{2}.
\end{equation}
Recall that $\hat{k}_{0,n}\xrightarrow{p}k_{0}$ as $n\to\infty$ and that the relevant error terms are $O_{p}(\Delta^{\frac{1}{2}})$. In addition, under the condition $n\Delta^{2}\to0$, we have a sharper rate $\hat{k}_{0,n}-k_{0}=o_{p}\big((n\Delta)^{-\frac{1}{2}}\big)$. If we can prove both $f'_{n}(k)=\frac{\partial}{\partial k}f_{n}(k)$ and $f''_{n}(k)=\frac{\partial^{2}}{\partial k^{2}}f_{n}(k)$ are $O_{p}(1)$ (as $n\to\infty$), then
\begin{align}\nonumber
    \sqrt{n\Delta}f'_{n}(k_{0})(\hat{k}_{0,n}-k_{0})&=\sqrt{n\Delta}O_{p}(1)o_{p}\big((\sqrt{n\Delta})^{-\frac{1}{2}}\big)=o_{p}(1),\\
    \sqrt{n\Delta}\frac{1}{2}f''_{n}(\tilde{k})(\hat{k}_{0,n}-k_{0})^{2}&=\sqrt{n\Delta}O_{p}(1)o_{p}\big((n\Delta)^{-1}\big)=o_{p}\big((n\Delta)^{-\frac{1}{2}}\big)=o_{p}(1)\nonumber.
\end{align}
Consequently, as $n\to\infty$:
\begin{equation}\nonumber
    \sqrt{n\Delta}(\hat{\beta}_{n}^{(\hat{k}_{0,n})}-\hat{\beta}_{n}^{(k_{0})})=o_{p}(1).
\end{equation}

\subsubsection{\texorpdfstring{$\hat{\beta}_{n}$}{betan} as a Function with respect to Elasticity}
\noindent By the following lemma, we notice that $A_{n}(k)=O_{p}(1)$, $B_{n}(k)=O_{p}(1)$ and $C_{n}(k):=B_{n}(k)-A_{n}^{2}(k)=O_{p}(1)$, as $n\to\infty$. Consequently, $f_{n}(k)=\frac{O_{p}(1)}{O_{p}(1)}=O_{p}(1)$.
\begin{lemma}
Assume that the initial value $\lambda_{0}$ has the distribution $\pi_{0}$ (i.e., $\lambda_{0}\sim\pi_{0}$). For any $q>0$,
\begin{equation}\label{RiemannSum-RiemannIntegral}
	\frac{1}{n}\sum_{i=1}^{n}\lambda_{(i-1)\Delta}^{q}\xrightarrow[\Delta\to0]{\mathcal{L}^{1}}\mathbb{E}[\lambda_{t}^{q}].
\end{equation}
\end{lemma}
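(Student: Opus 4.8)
The plan is to compare the discrete left-point average $\frac1n\sum_{i=1}^n\lambda_{(i-1)\Delta}^q$ with the continuous-time occupation average $\frac1T\int_0^T\lambda_t^q\,dt$, where $T=n\Delta$, and to handle the latter by the ergodic theorem for the continuous-time process. Because $\lambda_0\sim\pi_0$, the CKLS process is stationary, so $\mathbb{E}[\lambda_t^q]$ does not depend on $t$ and equals the stationary moment $m_q:=\int_0^{\infty}x^q\,\pi_0(dx)$; by Lemma~\ref{ergodicity-of-CKLS} this is finite for every $q>0$, and the time average converges, $\frac1T\int_0^T\lambda_t^q\,dt\xrightarrow[T\to\infty]{\mathcal{L}^1}m_q$ (the almost-sure ergodic limit upgrades to $\mathcal{L}^1$ by uniform integrability, using $\sup_t\mathbb{E}[\lambda_t^{q'}]<\infty$ for some $q'>q$ from Lemma~\ref{ergodicity-of-CKLS}'s (\romannumeral3)). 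Under Assumption~\ref{ass2} the regime $\Delta\to0$ forces $T=n\Delta\to\infty$, so this ergodic convergence is available along our sampling scheme; stationarity is precisely what makes the target $\mathbb{E}[\lambda_t^q]$ coincide with the ergodic limit $m_q$.

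It then remains to show the discretization error is negligible in $\mathcal{L}^1$:
\[
\mathbb{E}\Big|\frac1n\sum_{i=1}^n\lambda_{(i-1)\Delta}^q-\frac1T\int_0^T\lambda_t^q\,dt\Big|
=\frac1{n\Delta}\,\mathbb{E}\Big|\sum_{i=1}^n\int_{(i-1)\Delta}^{i\Delta}\big(\lambda_{(i-1)\Delta}^q-\lambda_t^q\big)\,dt\Big|.
\]
The key input is a uniform Hölder estimate for the power process, $\mathbb{E}\big[|\lambda_s^q-\lambda_\tau^q|\big]\le C|s-\tau|^{\rho}$ for all $\tau,s\in[0,T]$ with a constant $C$ independent of $(\tau,s)$. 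This is obtained exactly as in Steps~1--2 of the proof of Theorem~\ref{k-rate}, now for an arbitrary exponent: for $q\ge1$ one uses $|x^q-y^q|\le q(x^{q-1}+y^{q-1})|x-y|$ together with Cauchy--Schwarz, the uniform-in-time bound $\sup_t\mathbb{E}[\lambda_t^{2q-2}]<\infty$ and the $\mathcal{L}^2$-Hölder bound $\mathbb{E}[|\lambda_s-\lambda_\tau|^2]\le C|s-\tau|$, giving $\rho=\tfrac12$; for $0<q<1$ the subadditivity $|x^q-y^q|\le|x-y|^q$ and the $\mathcal{L}^q$-Hölder bound give $\rho=q/2$. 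In either case $\rho>0$.

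Bounding term by term then yields
\[
\frac1{n\Delta}\sum_{i=1}^n\int_{(i-1)\Delta}^{i\Delta}\mathbb{E}\big[|\lambda_t^q-\lambda_{(i-1)\Delta}^q|\big]\,dt
\le\frac{C}{n\Delta}\sum_{i=1}^n\int_{(i-1)\Delta}^{i\Delta}(t-(i-1)\Delta)^{\rho}\,dt
=\frac{C}{\rho+1}\,\Delta^{\rho}\xrightarrow[\Delta\to0]{}0,
\]
where the final bound is independent of $n$ because each per-interval error is $O(\Delta^{\rho+1})$ and there are exactly $n$ intervals, so the factor $n$ cancels against $n\Delta$ in the denominator. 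Combining this with the ergodic convergence via the triangle inequality gives $\mathbb{E}\big|\frac1n\sum_{i=1}^n\lambda_{(i-1)\Delta}^q-m_q\big|\to0$, which is the claim. The main obstacle is securing the Hölder constant \emph{uniformly} in the interval location; this is where stationarity (equivalently the uniform-in-time moment bounds of Lemma~\ref{ergodicity-of-CKLS}) is essential, since a constant growing with $\tau$ would spoil the cancellation of $n$ after dividing by $n\Delta$. An alternative route is to invoke the ergodic theorem directly for the positive Harris recurrent grid chain $\{\lambda_{i\Delta}\}$, but the discretization-versus-continuum comparison above is cleaner and reuses the regularity machinery already established.
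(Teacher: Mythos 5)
Your proof is correct and follows essentially the same route as the paper's: the same decomposition into a Riemann-sum discretization error (controlled via Tonelli and the two-case H\"{o}lder estimate---subadditivity $\lvert x^{q}-y^{q}\lvert\leq\lvert x-y\lvert^{q}$ for $0<q<1$, mean-value theorem plus Cauchy--Schwarz and the uniform moment bounds of Lemma~\ref{ergodicity-of-CKLS}'s (\romannumeral3) for $q\geq 1$) plus the ergodic limit of the continuous time average from positive Harris recurrence, with stationarity ($\lambda_{0}\sim\pi_{0}$) identifying the limit as $\mathbb{E}[\lambda_{t}^{q}]$. The only cosmetic differences are your slightly sharper per-interval integration of $(t-(i-1)\Delta)^{\rho}$ and your uniform-integrability remark for the $\mathcal{L}^{1}$ upgrade, where the paper simply cites the $a.s./\mathcal{L}^{1}$ ergodic statement of Lemma~\ref{ergodicity-of-CKLS}'s (\romannumeral4).
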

\begin{proof} 
Define the discretization error $\mathsf{R}_{n}$, and by Tonelli's theorem (nonnegative integrand after taking absolute values), for $s\in[(i-1)\Delta,i\Delta]$:
\begin{equation}\label{Tonelli}
	\mathsf{R}_{n}:=\frac{1}{T}\sum_{i=1}^{n}\lambda_{(i-1)\Delta}^{q}\Delta-\frac{1}{T}\int_{0}^{T}\lambda_{s}^{q}ds\quad\Longrightarrow\quad
	\mathbb{E}\big[\lvert\mathsf{R}_{n}\lvert\big]\leq\frac{1}{T}\sum_{i=1}^{n}\int_{(i-1)\Delta}^{i\Delta}\mathbb{E}\big[\lvert\lambda_{(i-1)\Delta}^{q}-\lambda_{s}^{q}\lvert\big]ds.
\end{equation}
\textit{Case 1:} By \eqref{inequalities} for $0<q<1$ and the pathwise $1/2$-H\"{o}lder bound \eqref{uniform-rate}:
\begin{equation}\nonumber
	\lvert \lambda_{(i-1)\Delta}^{q}-\lambda_{s}^{q}\lvert \leq \lvert \lambda_{(i-1)\Delta}-\lambda_{s}\lvert ^{q}
    ~~\Rightarrow~~
    \mathbb{E}\big[\lvert\lambda_{(i-1)\Delta}^{q}-\lambda_{s}^{q}\lvert\big]\leq\mathbb{E}\big[\lvert\lambda_{(i-1)\Delta}-\lambda_{s}\lvert^{q}\big]\leq C_{q} \lvert (i-1)\Delta-s\lvert^{q/2} \leq C_{q}\Delta^{q/2}.
\end{equation}
\textit{Case 2:} The proof for a general exponent $q\geq1$ is identical to the argument in \textit{Step~2} in the proof of Theorem~\ref{k-rate} for the case $f(x)=x^{2k}$. Applying the mean-value theorem to $x\mapsto x^{q}$, together with the moment bounds in Lemma~\ref{ergodicity-of-CKLS}'s (\romannumeral3) and the quadratic increment estimate \eqref{uniform-rate}, yields, 
\[
    \mathbb{E}\big[\lvert\lambda_{(i-1)\Delta)}^{q}-\lambda_{s}^{q}\lvert\big]\leq C_{q}'\lvert (i-1)\Delta-s\lvert^{\frac{1}{2}}\leq C_{q}'\Delta^{\frac{1}{2}}.
\]
Insert the two cases into \eqref{Tonelli} yields
\begin{equation}\nonumber
	\mathbb{E}\big[\lvert\mathsf{R}_{n}\lvert\big]\leq
    \begin{cases}
    \frac{1}{T}\sum_{i=1}^{n}\int_{(i-1)\Delta}^{i\Delta} C_{q}\Delta^{\frac{q}{2}}ds=C_{q}\Delta^{\frac{q}{2}}\xrightarrow[\Delta\to0]{}0,&0<q<1;\\
    \frac{1}{T}\sum_{i=1}^{n}\int_{(i-1)\Delta}^{i\Delta}C'_{q}\Delta^{\frac{1}{2}}ds=C'_{q}\Delta^{\frac{1}{2}}\xrightarrow[\Delta\to0]{}0,&q\geq 1.
    \end{cases}
    ~\Longrightarrow~\mathbb{E}\big[\lvert\mathsf{R}_{n}\lvert\big]\lesssim\Delta^{\frac{q}{2}\wedge \frac{1}{2}},
\end{equation}
whence $\mathsf{R}_{n}\to 0$ as $T\to\infty$ in $\mathcal{L}^{1}$ and hence in probability.\\[0.5\baselineskip]
\noindent Next, since $\lambda_{t}$ is positive Harris recurrent with the unique invariant law $\pi_{0}(dx)=p_{\infty}(x)dx$, the ergodic theorem yields, for any $q>0$ with $0\leq x\mapsto x^{q}\in \mathcal{L}^{1}(\pi_{0})$ (Lemma~\ref{ergodicity-of-CKLS}'s (\romannumeral4)):
\begin{equation}\nonumber
	\frac{1}{T}\int_{0}^{T} \lambda_{t}^{q}dt\xrightarrow[T\to\infty]{a.s./\mathcal{L}^{1}}\int_{0}^{\infty}x^{q}p_{\infty}(x)dx\overset{*}=\mathbb{E}[\lambda_{t}^{q}].
\end{equation}
$*$ further requires $\lambda_{0}\sim\pi_{0}$, whereas $a.s/\mathcal{L}^{1}$ convergence hold for any initial distribution.
\end{proof}

\noindent We can extend this lemma to a theorem covering a broader class of cases, which will be useful later in this section. Hereafter—except when discussing selected sufficient conditions that allow upgrading convergence to $\mathcal{L}^{q}$ or almost-sure convergence—we will not dwell on these stronger modes of convergence and will focus mainly on verifying convergence in probability.

\begin{theorem}\label{theorem-Riemann}[Riemann-sum approximation for positive Harris recurrent diffusions]\\
Let $\{Z_{t}\}_{t\geq 0}$ be a strictly positive, positive Harris recurrent diffusion on $(0,+\infty)$ with invariant distribution $\pi_{0}$. In the high-frequency setting, define $\mathcal{S}_{n}:=\frac{1}{n}\sum_{i=1}^{n}g(Z_{(i-1)\Delta})$. Assume:
\begin{itemize}
	\item[(A1)] (uniform H\"{o}lder condition) $\exists\delta\in(0,1]$ such that $\sup_{\lvert s-\tau\lvert\leq\Delta}\lvert Z_{s}-Z_{\tau}\lvert=O_{p}(\Delta^{\delta})$, $\forall \tau,s\in[0,T]$.
	\item[(A2)] (integrability and regularity of $g$) $g\in\mathcal{L}^{1}(\pi_{0})$, is locally Lipschitz on $(0,+\infty)$, and has polynomial growth: $\lvert g(x)\lvert\lesssim 1+x^{r}$ for some $r>0$.
\end{itemize}
Then the discretization error $\mathcal{R}_{n}:=\mathcal{S}_{n}-\frac{1}{T}\int_{0}^{T}g(Z_{t})dt$ satisfies
\begin{equation}\nonumber
	\mathcal{R}_{n}=O_{p}(\Delta^{\delta})\quad\text{and}\quad\mathcal{S}_{n}\xrightarrow{p}\int gd\pi_{0}<\infty,~\text{as }n\to\infty\quad(\text{i.e., }\mathcal{S}_{n}=O_{p}(1)).
\end{equation}
If moreover $\big\{\lvert g(Z_{i\Delta})\lvert^{q}\big\}_{i=0}^{n}$ is uniformly integrable (e.g., by the polynomial growth of $g$ and bounded $(qr+\varepsilon)$-moments of $Z_{t}$), then $\mathbb{E}\big[\lvert \mathcal{R}_{n}\lvert^{q}\big]\lesssim \Delta^{q\delta}~\text{and}~\mathcal{S}_{n}\xrightarrow{\mathcal{L}^{q}}\int gd\pi_{0}<\infty$ as $n\to\infty$; If moreover (A1) holds almost surely, i.e., $\sup_{\lvert s-\tau\lvert \leq \Delta}\lvert Z_{s}-Z_{\tau}\lvert=O_{a.s.}(\Delta^{\delta})$ for any $\tau,s\in[0,T]$, then $\mathcal{R}_{n}=O_{a.s.}(\Delta^{\delta})~\text{and}~\mathcal{S}_{n}\xrightarrow{a.s.}\int gd\pi_{0}<\infty$ as $n\to\infty$.
\end{theorem}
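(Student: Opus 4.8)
\emph{Sketch of argument.} The plan is to split $\mathcal{S}_n-\int g\,d\pi_0$ into a \emph{discretization error} $\mathcal{R}_n$ and an \emph{ergodic error}, and to treat them separately. Writing $\mathcal{R}_n=\frac{1}{T}\sum_{i=1}^{n}\int_{(i-1)\Delta}^{i\Delta}\big(g(Z_{(i-1)\Delta})-g(Z_t)\big)\,dt$ (using $\tfrac{1}{n}=\tfrac{\Delta}{T}$), and noting that $\tfrac{1}{T}\int_0^T g(Z_t)\,dt\to\int g\,d\pi_0$ almost surely (hence in probability, and in $\mathcal{L}^1$) by positive Harris recurrence together with $g\in\mathcal{L}^1(\pi_0)$, it remains only to control $\mathcal{R}_n$; the ergodic term is immediate from the ergodic theorem and simultaneously handles the a.s.\ and $\mathcal{L}^1$ modes once the relevant moments are finite.

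For the discretization error I would argue pathwise. Mirroring Step~2 in the proof of Theorem~\ref{k-rate}, the local Lipschitz property of $g$ combined with its polynomial growth yields, via the mean-value theorem, an increment bound of the form $\lvert g(x)-g(y)\rvert\lesssim(1+x^{r'}+y^{r'})\lvert x-y\rvert$, where the weight exponent $r'$ is governed by the polynomial growth of the local Lipschitz constant. Writing $W_n:=\sup_{\lvert s-\tau\rvert\leq\Delta}\lvert Z_s-Z_\tau\rvert$ and using $\tfrac{1}{T}\sum_i\int_{(i-1)\Delta}^{i\Delta}dt=1$, this gives
\begin{equation}\nonumber
\lvert\mathcal{R}_n\rvert\;\lesssim\;W_n\cdot\frac{1}{T}\sum_{i=1}^{n}\Delta\Big(1+\sup_{t\in[(i-1)\Delta,i\Delta]}Z_t^{r'}\Big).
\end{equation}
By (A1) the modulus of continuity satisfies $W_n=O_p(\Delta^\delta)$, while the weighted Riemann sum on the right converges, by a second application of the ergodic theorem (plus a further modulus-of-continuity estimate to pass from the suprema over subintervals to the grid values $Z_{(i-1)\Delta}^{r'}$), to $1+\int x^{r'}\,d\pi_0=O_p(1)$. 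The product of an $O_p(\Delta^\delta)$ factor and an $O_p(1)$ factor is $O_p(\Delta^\delta)$, yielding $\mathcal{R}_n=O_p(\Delta^\delta)$ and hence $\mathcal{S}_n\xrightarrow{p}\int g\,d\pi_0$ with $\mathcal{S}_n=O_p(1)$.

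The two refinements follow by upgrading the mode of convergence in the same decomposition. For the $\mathcal{L}^q$ statement I would take the pathwise bound into $\mathcal{L}^q$-norm: Hölder's inequality separates the modulus $W_n$ from the weight, the uniform integrability of $\{\lvert g(Z_{i\Delta})\rvert^q\}$ (guaranteed by polynomial growth of $g$ and uniformly bounded $(qr+\varepsilon)$-moments of $Z_t$) controls the weight in $\mathcal{L}^q$, and the $\mathcal{L}^q$-version of the increment estimate supplies the $\Delta^{q\delta}$ rate, giving $\mathbb{E}[\lvert\mathcal{R}_n\rvert^q]\lesssim\Delta^{q\delta}$ and $\mathcal{L}^q$-convergence. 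For the almost-sure statement, if (A1) holds a.s.\ then $W_n=O_{a.s.}(\Delta^\delta)$, the ergodic averages converge a.s., and the same product bound holds a.s., giving $\mathcal{R}_n=O_{a.s.}(\Delta^\delta)$ and $\mathcal{S}_n\xrightarrow{a.s.}\int g\,d\pi_0$.

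The main obstacle I anticipate is that, because the horizon grows ($n\Delta\to\infty$), the diffusion explores all of $(0,+\infty)$ and $\sup_{[0,T]}Z_t\to\infty$, so one cannot localize to a fixed compact set on which $g$ is Lipschitz. The crux is therefore to convert the merely \emph{local} Lipschitz regularity of $g$ into a \emph{polynomially weighted} global increment bound and then to show that the time-average of this polynomial weight remains $O_p(1)$ (resp.\ $\mathcal{L}^q$-bounded, resp.\ a.s.\ convergent) via the ergodic theorem and the uniform moment bounds of $Z_t$; this is precisely where the polynomial-growth hypothesis in (A2) and the moment structure of a positive Harris recurrent diffusion are essential. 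A secondary technical point is justifying the passage from the suprema $\sup_{[(i-1)\Delta,i\Delta]}Z_t^{r'}$ to the grid values, which is itself a discretization estimate of the same type and can be absorbed into the $O_p(\Delta^\delta)$ error.
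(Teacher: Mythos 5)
Your route is genuinely different from the paper's, and the difference is instructive. The paper controls $\mathcal{R}_{n}$ by localization: it introduces the exit time $\tau_{K}$ of $[K^{-1},K]$, uses the Lipschitz constant $L_{K}$ of $g$ on that compact set to get $\lvert\mathcal{R}_{n}\lvert\leq L_{K}\sup_{\lvert s-\tau\lvert\leq\Delta}\lvert Z_{s}-Z_{\tau}\lvert=O_{p}(\Delta^{\delta})$ on $\{\tau_{K}=T\}$, and then invokes $\mathbb{P}(\tau_{K}<T)\to0$ as $K\to\infty$. You instead replace localization by a globally valid, polynomially weighted increment bound $\lvert g(x)-g(y)\lvert\lesssim(1+x^{r'}+y^{r'})\lvert x-y\lvert$ and control the time-average of the weight by a second application of the ergodic theorem. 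The obstacle you flagged is real and in fact cuts against the paper's own argument: since $T=n\Delta\to\infty$ and a positive Harris recurrent diffusion exits any \emph{fixed} compact $[K^{-1},K]$ almost surely, $\mathbb{P}(\tau_{K}<T)\to1$, not $0$, for fixed $K$ as $n\to\infty$; making the localization airtight over a growing horizon requires $K=K_{n}\to\infty$ jointly with $L_{K_{n}}\Delta^{\delta}\to0$, i.e., quantitative control of the growth of $L_{K}$ — which is exactly the information your weighted bound makes explicit. In that sense your decomposition is the more robust of the two, and the ergodic treatment of the remaining term (and the uniform-integrability upgrade to $\mathcal{L}^{q}$, and the a.s.\ variant) matches the paper's Steps~2 and~$\ast$.

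There is, however, a gap in your key step as stated: the weighted increment bound does \emph{not} follow from (A2). Local Lipschitzness together with polynomial growth of $g$ itself places no constraint on the growth of the local Lipschitz constant — e.g.\ $g(x)=\sin(e^{x})$ is bounded (hence in $\mathcal{L}^{1}(\pi_{0})$) and locally Lipschitz, yet its derivative grows like $e^{x}$, so no bound $(1+x^{r'}+y^{r'})\lvert x-y\lvert$ holds. Your mean-value argument silently strengthens (A2) to ``the local Lipschitz constant grows polynomially,'' i.e.\ $\lvert g'(x)\lvert\lesssim 1+x^{r'}+x^{-r''}$; note the negative power is also needed, since the paper's actual applications $h_{1}(x)=x^{2-2k}\log x$ and $h_{2}(x)=x^{4-4k}\log x$ have derivatives blowing up at $0$, so the weight must be integrable near the origin as well — fine for CKLS/CIR by Lemma~\ref{ergodicity-of-CKLS}'s (\romannumeral4) and Lemma~\ref{ergodicity-of-CIR}'s (\romannumeral3), but not a consequence of (A1)--(A2). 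Similarly, your second ergodic application needs $x^{r'}$ (and $x^{-r''}$) in $\mathcal{L}^{1}(\pi_{0})$, again an extra hypothesis. Finally, for the $\mathcal{L}^{q}$ refinement, the $O_{p}(\Delta^{\delta})$ modulus in (A1) does not by itself yield $\mathbb{E}[\lvert\mathcal{R}_{n}\lvert^{q}]\lesssim\Delta^{q\delta}$; you need a moment ($\mathcal{L}^{q'}$, $q'>q$, to run H\"{o}lder) version of the increment estimate, as you half-acknowledge — the paper is equally terse at this point. With these strengthened hypotheses stated explicitly, your bootstrap from interval suprema to grid values and the product bound go through, and the proof is sound.
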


\begin{remark}[Borel--Cantelli criterion for almost sure convergence]
The almost sure convergence of $\mathcal{S}_{n}$ also holds if the discretization error satisfies a summable tail condition, i.e., $\sum_{n=1}^{\infty}\mathbb{P}\big(\big\lvert\mathcal{S}_{n}-\frac{1}{T}\int_{0}^{T}g(Z_{t})dt\big\lvert>\varepsilon\big)<\infty,~\forall\varepsilon>0$, by the Borel--Cantelli lemma. This serves as an alternative to the uniform H\"{o}lder condition in probability.
\end{remark}

\begin{proof}
Write
\begin{equation}\nonumber
	\mathcal{S}_{n}=\frac{1}{T}\sum_{i=1}^{n}g\big(Z_{(i-1)\Delta}\big)\Delta=\frac{1}{T}\int_{0}^{T}g(Z_{t})dt+\mathcal{R}_{n},\quad\text{with }\mathcal{R}_{n}:=\frac{1}{T}\sum_{i=1}^{n}\int_{(i-1)\Delta}^{i\Delta}\big(g(Z_{(i-1)\Delta})-g(Z_{s})\big)ds.
\end{equation}
\noindent \textit{Step~1 (Discretization error under localization)} Fix $K\geq 1$, set $\tau_{K}:=\{\inf\{t\in[0,T]:Z_{t}\notin[K^{-1},K]\}\}\wedge T$ and let $Z_{t}^{(K)}:=Z_{t\wedge\tau_{K}}$. On $\{\tau_{K}=T\}$, the process stays in $[K^{-1},K]$, where the function $g$ is Lipschitz with some constant $L_{K}$ by (A2), hence $\lvert \mathcal{R}_{n}\lvert\leq\frac{L_{K}}{T}\sum_{i=1}^{n}\int_{(i-1)\Delta}^{i\Delta}\big\lvert Z^{(K)}_{(i-1)\Delta}-Z^{(K)}_{s}\big\lvert ds\leq L_{K}\sup_{\lvert s-\tau\lvert\leq\Delta}\big\lvert Z^{(K)}_{s}-Z^{(K)}_{\tau}\big\lvert$.
By (A1) applied to $Z$ (and hence to $Z^{(K)}$), $\sup_{\lvert s-\tau\lvert\leq\Delta}\big\lvert Z^{(K)}_{s}-Z^{(K)}_{\tau}\big\lvert=O_{p}(\Delta^{\delta})$, so $\mathcal{R}_{n}=O_{p}(\Delta^{\delta})$ on $\{\tau_{K}=T\}$. Since $Z_{t}$ is strictly positive and nonexplosive on finite horizons, we have $\mathbb{P}(\tau_{K}<T)\to0$ as $K\to\infty$, and a standard localization argument yields $\mathcal{R}_{n}=O_{p}(\Delta^{\delta})$. If in addition (A1) holds almost surely, the same argument shows $\mathcal{R}_{n}=O_{a.s.}(\Delta^{\delta})$.\\
\noindent \textit{Step~2 (Convergence of the time average)} By positive Harris recurrence of $Z_{t}$ and $g\in\mathcal{L}^{1}(\pi_{0})$, the continuous-time ergodicity for additive functionals yields $\frac{1}{T}\int_{0}^{T}g(Z_{t})dt\xrightarrow[T\to\infty]{a.s./\mathcal{L}^{1}}\int gd\pi_{0}<\infty$. Combining this with the bound in \textit{Step~1} yields $\mathcal{S}_{n}=\frac{1}{T}\int_{0}^{T}g(Z_{t})dt+\mathcal{R}_{n}\xrightarrow{p}\int gd\pi_{0}$ as $n\to\infty$, hence $\mathcal{S}_{n}=O_{p}(1)$. If $\mathcal{R}_{n}=O_{a.s.}(\Delta^\delta)$, the same reasoning yields $\mathcal{S}_{n}\xrightarrow{a.s.}\int gd\pi_{0}<\infty$ as $n\to\infty$.\\
\noindent \textit{Step~$\ast$ ($\mathcal{L}^{q}$-convergence):} If, for some $q>0$, the family $\{\lvert g(Z_{i\Delta})\lvert^{q}\}_{i=0}^{n}$ is uniformly integrable (for instance by the polynomial growth of $g$ and a uniform moment bound $\sup_{0\leq t\leq T}\mathbb{E}\big[\lvert Z_{t}\lvert^{qr+\varepsilon}\big]<\infty$ for some $\varepsilon>0$), the estimate in Step~1 improves to $\mathcal{R}_{n}=O_{\mathcal{L}^{q}}(\Delta^{\delta})$, and the ergodic theorem in $\mathcal{L}^{1}$ together with uniform integrability implies $\mathcal{S}_{n}\xrightarrow{\mathcal{L}^{q}}\int gd\pi_{0}<\infty$ as $n\to\infty$.
\end{proof}

\subsubsection{First-order Derivative of \texorpdfstring{$\hat{\beta}_{n}$}{betan} with respect to Elasticity}
Recall \eqref{AB}, the first-order derivatives $A'_{n}(k)=\frac{\partial}{\partial k}A_{n}(k)$ and $B'_{n}(k)=\frac{\partial}{\partial k}B_{n}(k)$ are
\begin{equation}
\begin{aligned}\nonumber
    A'_{n}(k)&\overset{(\ast)}{=}\frac{1}{n}\sum_{i=1}^{n}\frac{L^{2}}{4}\frac{\partial}{\partial k}\Big(\frac{\lambda_{(i-1)\Delta}^{2-2k}}{(1-k)^{2}}\Big)=\frac{1}{n}\sum_{i=1}^{n}\frac{L^{2}}{4}\frac{\lambda_{(i-1)\Delta}^{2-2k}}{(1-k)^{2}}\Big(\frac{2}{1-k}-2\log\lambda_{(i-1)\Delta}\Big)=:\frac{1}{n}\sum_{i=1}^{n}g_{1}(\lambda_{(i-1)\Delta}),\nonumber\\
    B'_{n}(k)&\overset{(\diamond)}{=}\frac{1}{n}\sum_{i=1}^{n}\frac{L^{4}}{16}\frac{\partial}{\partial k}\Big(\frac{\lambda_{(i-1)\Delta}^{4-4k}}{(1-k)^{4}}\Big)=\frac{1}{n}\sum_{i=1}^{n}\frac{L^{4}\lambda_{(i-1)\Delta}^{4-4k}}{16(1-k)^{4}}\Big(\frac{4}{1-k}-4\log\lambda_{(i-1)\Delta}\Big)=:\frac{1}{n}\sum_{i=1}^{n}g_{2}(\lambda_{(i-1)\Delta}),\nonumber
\end{aligned}
\end{equation}
where $g_{1}(x):=\frac{L^{2}}{2(1-k)^{3}}x^{2-2k}-\frac{L^{2}}{2(1-k)^{2}}x^{2-2k}\log x$ and $g_{2}(x):=\frac{L^{4}}{4(1-k)^{5}}x^{4-4k}-\frac{L^{4}}{4(1-k)^{4}}x^{4-4k}\log x$. Note $(\ast)$ and $(\diamond)$ (interchangeability of summation and partial derivative) are ensured by dominated convergence theorem (Lemma~\ref{CKLS-process}'s (\romannumeral3)).\\[0.5\baselineskip]
\noindent The goal is to take advantage of Theorem~\ref{theorem-Riemann} to obtain the convergences:
\begin{equation}\nonumber
    A'_{n}(k)=\frac{1}{n}\sum_{i=1}^{n}g_{1}(\lambda_{(i-1)\Delta})=O_{p}(1)\quad\text{ and }\quad
    B'_{n}(k)=\frac{1}{n}\sum_{i=1}^{n}g_{2}(\lambda_{(i-1)\Delta})=O_{p}(1),
\end{equation}
so that $C'_{n}(k)=B'_{n}(k)-2A'_{n}(k)A_{n}(k)=O_{p}(1)$ and
\begin{equation}\nonumber
    f'_{n}(k)=\frac{\gamma^{2}}{2}\frac{A'_{n}(k_{0})C_{n}(k_{0})-A_{n}(k_{0})C'_{n}(k_{0})}{C_{n}^{2}(k_{0})}=O_{p}(1)\frac{O_{p}(1)O_{p}(1)-O_{p}(1)O_{p}(1)}{O_{p}(1)O_{p}(1)}=O_{p}(1).
\end{equation}
\noindent  Let $h_{1}(x):=x^{2-2k}\log x$, $h_{2}(x):=x^{4-4k}\log x$. Recalling assumptions (A1)-(A2) in Theorem~\ref{theorem-Riemann}, note that the uniform $1/2$-H\"{o}lder continuity of $\lambda_{t}$ in probability has been established in Theorem~\ref{k-rate}. Since the building blocks $x\mapsto x^{q}$ and $x\mapsto \log x$ are $\mathcal{C}^{1}$ on $(0,+\infty)$, both $h_{1}$ and $h_{2}$ are $\mathcal{C}^{1}$ and hence locally Lipschitz on $(0,+\infty)$. Therefore, it remains only to verify that $h_{1},h_{2}\in\mathcal{L}^{1}(\pi_{0})$.\\[0.5\baselineskip]
\noindent For the $\mathcal{L}^{1}$-integrability of $h_{1}$, split the integral at $x=1$. For any $r,r'>0$, the elementary bounds $\log x\lesssim x^{r}$ for $x\geq 1$ and $\lvert\log x \lvert\lesssim x^{-r'}$ for $0<x\leq 1$ yield
\begin{equation}\nonumber
	\int_{1}^{\infty}\lvert h_{1}(x)\lvert \pi_{0}(dx)\lesssim \int_{1}^{\infty}x^{2-2k+r}p_{\infty}(x)dx\quad\text{and}\quad
	\int_{0}^{1}\lvert h_{1}(x)\lvert\pi_{0}(dx)\lesssim \int_{0}^{1}x^{2-2k-r'}p_{\infty}(x)dx.
\end{equation}
with $2-2k+r\in(r,r+1]$ and $2-2k-r'\in(-r',1-r']$ (when $\frac{1}{2}<k<1$) Lemma~\ref{ergodicity-of-CKLS}'s (\romannumeral4) implies finiteness of both integrals for any $r,r'>0$. In the case $k=\frac{1}{2}$, by Lemma~\ref{ergodicity-of-CIR}'s (\romannumeral3), with $p_\infty(x)\propto x^{(2a/\sigma^{2})-1}e^{-(2b/\sigma^{2})x}$, the same estimates give
\begin{equation}\nonumber
	\int_{1}^{\infty}x\log x p_{\infty}(x)dx\lesssim \int_{1}^{\infty}x^{r+\frac{2a}{\sigma^{2}}}e^{-\frac{2b}{\sigma^{2}}x}dx<\infty\quad\text{and}\quad
	~\int_{0}^{1}x\lvert\log x\lvert p_{\infty}(x)dx \lesssim \int_{0}^{1}x^{-r'+\frac{2a}{\sigma^{2}}}e^{-\frac{2b}{\sigma^{2}}x}dx<\infty.
\end{equation}
As both integrals can be written as (scaled) upper/lower incomplete Gamma functions, their respective finiteness are ensured for $r+\frac{2a}{\sigma^{2}}+1>0$ and $-r'+\frac{2a}{\sigma^{2}}+1>0$. Since $\frac{2a}{\sigma^{2}}+1>0$, one may take any $r>0$, $0<r'<\frac{2a}{\sigma^{2}}+1$. Consequently, $h_{1}\in\mathcal{L}^{1}(\pi_{0})$.

The $\mathcal{L}^{1}$-integrability of $h_{2}$ follows immediately from the above argument for $h_{1}$. Indeed, on $(0,1]$ we have $x^{4-4k}\leq 1$, so $\lvert h_{2}(x)\lvert \leq \lvert h_{1}(x)\lvert $ and the finiteness of $\int_{0}^{1}\lvert h_{2}\lvert d\pi_{0}$ follows from $h_{1}\in\mathcal{L}^{1}(\pi_{0})$. On $[1,\infty)$, using $\log x\lesssim x^{r}$ for any $r>0$ yields $\lvert h_{2}(x)\lvert=x^{4-4k}\lvert \log x\lvert\lesssim x^{4-4k+r}$, and both the CKLS stationary density (super-polynomial right tail) and the CIR density (exponential right tail) ensure $\int_{1}^{\infty}x^{4-4k+r}p_{\infty}(x)dx<\infty$ for all $r>0$. Consequently, $h_{2}\in\mathcal{L}^{1}(\pi_{0})$.

\subsubsection{Second-order Derivative of \texorpdfstring{$\hat{\beta}_{n}$}{betan} with respect to Elasticity}
\noindent The expression of $f''_{n}(k):=\frac{\partial^{2}}{\partial k^{2}}f_{n}(k)$ is
\begin{equation}\nonumber
    f''_{n}(k)=\frac{\big(A''_{n}(k)C_{n}(k)-A_{n}(k)C''_{n}(k)\big)C_{n}(k)-2C'_{n}(k)\big(A'_{n}(k)C_{n}(k)-A_{n}(k)C'_{n}(k)\big)}{C^{3}_{n}(k)}.
\end{equation}
Analogously to the idea in the previous subsection, if we can prove $A''_{n}(k)=O_{p}(1)$, $B''_{n}(k)=O_{p}(1)$, then $C''_{n}(k)=B''_{n}(k)-2A''_{n}(k)A_{n}(k)-2A'_{n}(k)A'_{n}(k)=O_{p}(1)$, and we will have
\begin{equation}\nonumber
    f''_{n}(k)=\frac{\big(O_{p}(1)O_{p}(1)-O_{p}(1)O_{p}(1)\big)O_{p}(1)-O_{p}(1)\big(O_{p}(1)O_{p}(1)-O_{p}(1)O_{p}(1)\big)}{O_{p}(1)O_{p}(1)O_{p}(1)}=O_{p}(1).\nonumber
\end{equation}
Recall \eqref{AB}, the second-order derivatives $A''_{n}(k)=\frac{\partial^{2}}{\partial k^{2}}A_{n}(k)$ and $B''_{n}(k)=\frac{\partial^{2}}{\partial k^{2}}B_{n}(k)$ are
\begin{align}\nonumber
    &A''_{n}(k)\overset{(\ast)}{=}\frac{1}{n}\sum_{i=1}^{n}\frac{\partial}{\partial k}\Big[\frac{L^{2}}{4}\frac{\lambda_{(i-1)\Delta}^{2-2k}}{(1-k)^{2}}\Big(\frac{2}{1-k}-2\log \lambda_{(i-1)\Delta}\Big)\Big]\\
    =&\frac{1}{n}\sum_{i=1}^{n}\Bigg[\frac{3L^{2}}{2(1-k)^{4}}\lambda_{(i-1)\Delta}^{2-2k}-\frac{2L^{2}}{(1-k)^{3}}\lambda_{(i-1)\Delta}^{2-2k}\log\lambda_{(i-1)\Delta}+\frac{L^{2}}{(1-k)^{2}}\lambda_{(i-1)\Delta}^{2-2k}\big(\log \lambda_{(i-1)\Delta}\big)^{2}\Bigg],\nonumber\\
    &B''_{n}(k)\overset{(\diamond)}{=}\frac{1}{n}\sum_{i=1}^{n}\frac{\partial}{\partial k}\Big[\frac{L^{4}\lambda_{(i-1)\Delta}^{4-4k}}{16(1-k)^{4}}\Big(\frac{4}{1-k}-4\log\lambda_{(i-1)\Delta}\Big)\Big]\nonumber\\
    =&\frac{1}{n}\sum_{i=1}^{n}\Bigg[\frac{5L^{4}}{4(1-k)^{6}}\lambda_{(i-1)\Delta}^{4-4k}-\frac{2L^{4}}{(1-k)^{5}}\lambda_{(i-1)\Delta}^{4-4k}\log\lambda_{(i-1)\Delta}+\frac{L^{4}}{(1-k)^{4}}\lambda_{(i-1)\Delta}^{4-4k}\big(\log\lambda_{(i-1)\Delta}\big)^{2}\Bigg].\nonumber
\end{align}
Again, $(\ast)$ and $(\diamond)$ are ensured by dominated convergence theorem. By analogous arguments as in the previous subsection. Analogous to the idea as stated in the previous subsection, by the local Lipschitz and $\mathcal{L}^{1}(\pi_{0})$-integrability of $x^{2-2k}(\log x)^{q'},~x^{4-4k}(\log x)^{q'}$, $(q'=0,1,2)$ (verified previously using Lemma~\ref{ergodicity-of-CKLS}'s (\romannumeral4) and Lemma~\ref{ergodicity-of-CIR}'s (\romannumeral3), all functions inside the braces satisfy the assumptions of Theorem~\ref{theorem-Riemann}. Hence each average in the expressions of $A''_{n}(k)$ and $B''_{n}(k)$ are $O_p(1)$, and so $A''_{n}(k)=O_{p}(1),~B''_{n}(k)=O_{p}(1)$.

\bibliographystyle{unsrt}  
\bibliography{references} 

\begin{appendix}
\section{Preliminaries: Key Properties of CKLS and CIR Processes}\label{aaa}
\begin{lemma}\label{ergodicity-of-CKLS}[Key Properties of the CKLS process]\\
(\romannumeral1) \textbf{Existence and uniqueness of the (strong) solution and its pathwise strict positivity:} For $k>\frac{1}{2}$, $\lambda_{t}$ is a pathwise unique strong solution (given that the initial value $\lambda_{0}>0$), being strictly positive-valued almost surely. For $k=\frac{1}{2}$, $\lambda_{t}$ is pathwise unique and strong: for subcase $2a\geq\sigma^{2}$, $\lambda_{t}$ is strictly positive-valued; for subcase $2a<\sigma^{2}$, $\lambda_{t}$ can reach $0$ with probability one, and $0$ is an instantaneously reflecting boundary: the process spends zero Lebesgue time at $0$ and leaves $0$ continuously (i.e., it immediately re-enters $(0,+\infty)$ with no jump). For $0<k<\frac{1}{2}$, the origin is an accessible boundary, and a boundary condition at $\lambda_{t}=0$ is required for uniqueness. A standard approach is to choose reflection at the origin, which can render the model well-posed. For any $k>0$, $\infty$ is an unattainbale boundary.\\ 
(\romannumeral2) \textbf{Positive Harris recurrence:} On $t\in[0,+\infty)$ (meaning the time filtration is modified as $\{\mathcal{F}_{t}\}_{t\in[0,+\infty)}$), $\lambda_{t}$ is positive Harris recurrent (provided that $C_{k}<\infty$, for which a simple sufficient condition is $b>0$ and $a>0$ for $k\geq \frac{1}{2}$) with a unique invariant law $\pi_{0}(dx)=p_{\infty}(x)dx$ (though not essential, we may set $\lambda_{0}\sim\pi_{0}$ w.l.o.g. to ensure $*$ in (\romannumeral4)) and the unique stationary density $p_{\infty}(x)=C_{k}x^{-2k}e^{\psi(x;k)}$ with $C_{k}=\big(\int_{0}^{\infty}u^{-2k}e^{\Lambda(u;k)}du\big)^{-1}$, where
$$\psi(x;k)=\left\{
\begin{aligned}
    &\frac{2}{\sigma^{2}}\Big(\frac{ax^{1-2k}}{1-2k}-\frac{bx^{2-2k}}{2-2k}\Big),&&k\in(0,\frac{1}{2}) \cup(\frac{1}{2},1) \cup (1,\infty);\\
    &\frac{2}{\sigma^{2}}\big(a\log x-bx\big),&&k=\frac{1}{2}\text{, (Cox--Ingersoll--Ross model)};\\
    &\frac{2}{\sigma^{2}}\big(-\frac{a}{x}-b\log x\big),&&k=1\text{, (Brennan and Schwartz model)},
\end{aligned}
\right.
$$
For notational convenience, expectations with respect to the invariant law $\pi_{0}$ will simply be denoted by $\mathbb{E}$; that is, $\mathbb{E}_{\pi_{0}}[f(\lambda_{t})]\equiv\mathbb{E}[f(\lambda_{t})]:= \int_{0}^{\infty}f(x)\pi_{0}(dx)$ for some functions $f$.
\\
(\romannumeral3) \textbf{Uniform $q$-th moment bound:} For $k\in(\frac{1}{2},1)$, for any $q\geq 0 $, there exists $M=M(q,T,a,b,\sigma,k,\lambda_{0})<\infty$ such that $\mathbb{E}\big[\sup_{t\in[0,T]}\lambda_{t}^{\pm q}\big]\leq M$. Particularly, $\mathbb{E}[\lambda_{t}^{\pm q}]\leq\sup_{t\in[0,T]}\mathbb{E}[\lambda_{t}^{\pm q}]\leq M<\infty$.\\
(\romannumeral4) \textbf{Ergodic limits of moments:} For $k\in(\frac{1}{2},1)$, then for any $q$ with $\mathbb{R}_{+}\ni x\mapsto x^{q}\in \mathcal{L}^{1}(\pi_0)$, by Birkhoff's theorem, time average integral of $\lambda_{t}^{q}$ has the following almost sure and $\mathcal{L}^{1}$ ergodic limit
\begin{equation}\nonumber
     \frac{1}{T}\int_{0}^{T}\lambda_{t}^{q}dt\xrightarrow[T\to\infty]{a.s./\mathcal{L}^{1}}\int_{0}^{\infty}x^{q}p_{\infty}(x)dx\overset{*}=\mathbb{E}[\lambda_{t}^{q}];
\end{equation}
In particular, in the case of $k\in(\frac{1}{2},1)$, the integral $\int_{0}^{\infty}x^{q} p_{\infty}(x)dx<\infty$ for any $q\in\mathbb{R}$ (when $q=0$, the above convergence holds trivially).
\end{lemma}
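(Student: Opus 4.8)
The plan is to treat \eqref{CKLS-process} as a regular one-dimensional diffusion on $(0,+\infty)$ and read off all four properties from its scale function and speed measure, supplemented by Itô-based moment estimates. Writing $\mu(x)=a-bx$ and $\sigma^2(x)=\sigma^2x^{2k}$, the basic objects are the scale density $s'(x)=\exp\!\big(-\int_c^x \tfrac{2\mu(z)}{\sigma^2(z)}\,dz\big)$ and the speed density $m(x)=\big(\sigma^2(x)\,s'(x)\big)^{-1}$. A direct computation gives $\int_c^x \tfrac{2\mu(z)}{\sigma^2(z)}\,dz=\psi(x;k)+\text{const}$, so that $m(x)\propto x^{-2k}e^{\psi(x;k)}$, which already anticipates the stationary density claimed in (\romannumeral2).

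For (\romannumeral1), I would obtain strong existence and pathwise uniqueness from the Yamada--Watanabe criterion: the drift is globally Lipschitz and $x\mapsto\sigma x^{k}$ is locally $\tfrac12$-H\"older for $k\ge\tfrac12$, so the modulus $\rho(u)=Cu^{1/2}$ satisfies $\int_{0+}\rho^{-2}=\infty$. Strict positivity and non-explosion then come from Feller's boundary test applied to $s$ and $m$. For $k>\tfrac12$ one has $1-2k<0$, so near $0$ the integral $\int_c^x \tfrac{2\mu}{\sigma^2}$ is dominated by $\tfrac{2a}{\sigma^2(1-2k)}x^{1-2k}\to-\infty$, forcing $s(0^+)=-\infty$; hence $0$ is inaccessible and $\lambda_t>0$ a.s. The boundary $\infty$ is likewise natural, since $2-2k>0$ lets the mean-reverting term dominate and $s(\infty)=+\infty$. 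The CIR case $k=\tfrac12$ reduces to the classical dichotomy: $\int_{0+}x^{-2a/\sigma^2}\,dx$ converges iff $2a<\sigma^2$, recovering the reflecting/attainable behaviour stated in the lemma.

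Part (\romannumeral2) follows once I verify $C_k^{-1}=\int_0^\infty x^{-2k}e^{\psi(x;k)}\,dx<\infty$: for $k\in(\tfrac12,1)$ the right tail decays like $\exp\!\big(-\tfrac{2b}{\sigma^2}\tfrac{x^{2-2k}}{2-2k}\big)$ (super-polynomially) and the left tail like $\exp\!\big(\tfrac{2a}{\sigma^2(1-2k)}x^{1-2k}\big)$ with negative coefficient, so the speed measure is finite and $p_\infty=C_k\,m$ is a genuine stationary density. Finiteness of the speed measure together with inaccessibility of both boundaries yields positive recurrence, and positive Harris recurrence then follows from the standard theory of regular one-dimensional diffusions (equivalently, from a Foster--Lyapunov drift condition for a discrete skeleton chain).

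For the moment bounds (\romannumeral3) I would apply It\^o's formula to $\lambda_t^{\pm q}$. The drift of $\lambda_t^{q}$ is $qa\lambda^{q-1}-qb\lambda^{q}+\tfrac12 q(q-1)\sigma^2\lambda^{q-2+2k}$; since $k<1$ every exponent is $\le q$ and the $-qb\lambda^{q}$ term supplies mean-reversion, so Gr\"onwall controls $\mathbb{E}[\lambda_t^{q}]$ uniformly in $t$ and the BDG inequality upgrades this to the supremum. For negative powers the decisive point---and the main obstacle---is the interplay of the two singular drift terms of $\lambda_t^{-q}$, namely $-qa\lambda^{-q-1}$ and $\tfrac12 q(q+1)\sigma^2\lambda^{2k-q-2}$: because $2k>1$ we have $2k-q-2>-q-1$, so near the origin the negative term $-qa\lambda^{-q-1}$ dominates and keeps the drift bounded above, whence Gr\"onwall and BDG again give a finite uniform bound. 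This is precisely where $k>\tfrac12$ is indispensable. Finally, (\romannumeral4) is the Birkhoff/Chac\'on--Ornstein ergodic theorem for the positive Harris recurrent $\lambda_t$: for $g\in\mathcal{L}^1(\pi_0)$ the time averages converge a.s.\ and in $\mathcal{L}^1$ to $\int g\,d\pi_0$, and the tail estimates from (\romannumeral2) show $x\mapsto x^q\in\mathcal{L}^1(\pi_0)$ for every $q\in\mathbb{R}$ when $k\in(\tfrac12,1)$, so the limit holds for all real $q$; the identification of the limit with $\mathbb{E}[\lambda_t^q]$ uses stationarity, i.e.\ $\lambda_0\sim\pi_0$.
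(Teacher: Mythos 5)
Your proposal is correct in substance, but note that the paper does not actually prove this lemma at all: its ``proof'' is a pointer to \cite{ning2025ckls}, \cite{mishura2022parameter} and \cite{andersen2007moment}. What you have written is a self-contained reconstruction of the standard machinery that underlies those references, and the ingredients are the right ones: Yamada--Watanabe for strong existence and pathwise uniqueness when $k\geq\frac{1}{2}$; the scale/speed computation showing $\int_c^x 2\mu/\sigma^2 = \psi(x;k)+\mathrm{const}$, so that the speed density $m(x)\propto x^{-2k}e^{\psi(x;k)}$ is exactly the claimed $p_\infty$ up to normalization; finiteness of the speed measure plus inaccessibility of both boundaries for positive (Harris) recurrence; It\^{o}--Gr\"{o}nwall--BDG for the uniform $\pm q$ moment bounds, with the correct identification that $2k-q-2>-q-1$ near the origin is precisely where $k>\frac{1}{2}$ enters for negative moments; and Birkhoff for (\romannumeral4), with the stationarity caveat $\lambda_0\sim\pi_0$ for the identity $\ast$, which matches the lemma's own remark.

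Two spots deserve tightening if you were to write this out in full. First, in the CIR subcase $k=\frac{1}{2}$ you decide attainability of $0$ from the scale integral $\int_{0+}x^{-2a/\sigma^2}dx$ alone; Feller's test requires in addition the speed-side integral $\int_{0+}\big(s(x)-s(0)\big)m(x)dx<\infty$ (and, for the ``zero Lebesgue time at $0$, instantaneous reflection'' claim, that the speed measure assigns no mass to $\{0\}$). For CIR one computes $s'(x)\sim x^{-\nu}$, $m(x)\sim x^{\nu-1}$ with $\nu=2a/\sigma^2$, so the integrand is $O(1)$ near $0$ and your dichotomy survives unchanged---but the shortcut as stated is not a valid criterion in general. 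Second, the negative-moment supremum bound needs a localization and bootstrap: $\lambda_t^{-q}$ is not a priori integrable, so one must stop at $\tau_m:=\inf\{t:\lambda_t<1/m\}$, apply Gr\"{o}nwall to the stopped process, let $m\to\infty$ by Fatou to get the pointwise bounds $\sup_{t\leq T}\mathbb{E}[\lambda_t^{-r}]<\infty$ for \emph{all} $r>0$ first, and only then invoke BDG, since the martingale term of $\lambda^{-q}$ has quadratic variation involving $\lambda^{2(k-1-q)}$, a strictly higher-order negative power. Neither point changes the architecture of your argument, which is sound and consistent with the cited literature.
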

\begin{proof}
    See \cite{ning2025ckls}, \cite{mishura2022parameter}, \cite{andersen2007moment}.
\end{proof}

\begin{lemma}\label{ergodicity-of-CIR} [Key Properties of the CIR process]\\
(\romannumeral1) \textbf{Feller's condition:} Following from Feller's boundary classification, $\infty$ is an unattainable boundary. Moreover, the following dichotomy holds: If $2\alpha\geq\gamma^{2}$ is satisfied, then the boundary $0$ is inaccessible and the process remains strictly positive, i.e., $X_{t}>0$ for all $t\geq 0$ almost surely; If $2\alpha<\gamma^{2}$, then the origin is an attainable (instantaneously reflecting) boundary, and the process can hit $0$ in finite time with positive probability.\\
(\romannumeral2) \textbf{Uniform $q$-th moment bound} (As a special case of Lemma~\ref{ergodicity-of-CKLS}'s (\romannumeral3)) For any $q>0$, there exists a constant $C=C(q,T,\alpha,\beta,\gamma,X_{0})$ such that $\sup_{0\leq t\leq T}\mathbb{E}\big[X_{t}^{q}\big]\leq C$.\\
(\romannumeral3) \textbf{Positive Harris recurrence:} (As a special case of Lemma~\ref{ergodicity-of-CKLS}'s (\romannumeral4)) The CIR process admits a unique invariant distribution $\pi_{0}=\Gamma(\kappa,\theta)$, shape $\kappa=\frac{2\alpha}{\gamma^{2}}>0$, rate $\theta=\frac{\gamma^{2}}{2\beta}$, with density $p_{\infty}(x)=\frac{(1/\theta)^{\kappa}}{\Gamma(\kappa)}x^{\kappa-1}e^{-x/\theta}\mathds{1}_{[0,+\infty)}(x)$. All moments exist for any $q>-\kappa$, and in particular for all integers $q\geq 0$. For notational convenience, expectations with respect to the invariant law $\pi_{0}$ will simply be denoted by $\mathbb{E}$; that is, $\mathbb{E}_{\pi_{0}}[f(X_{t})]\equiv\mathbb{E}[f(X_{t})]:= \int_{0}^{\infty}f(x)\pi_{0}(dx)$ for some functions $f$.
By Birkhoff's ergodic theorem, 
\begin{equation}\nonumber
	\frac{1}{T}\int_{0}^{T}X_{t}^{q}dt\xrightarrow[T\to\infty]{a.s./L^{1}}\mathbb{E}[X^{q}]=\theta^{q}\frac{\Gamma(\kappa+q)}{\Gamma(\kappa)}=\theta^{q}\prod_{j=0}^{q-1}(\kappa+j)=\Big(\frac{\gamma^{2}}{2\beta}\Big)^{q}\prod_{j=0}^{q-1}\Big(\frac{2\alpha}{\gamma^{2}}+j\Big)<\infty.
\end{equation}
With the notation $m_{q}:=\mathbb{E}[X^{q}]$, for $q=1,2,3$ in particular, this yields
\begin{equation}\nonumber
   m_{1}=\theta\kappa=\frac{\alpha}{\beta},\quad
   m_{2}=\theta\kappa(\kappa+1)=\frac{\alpha(\alpha+\frac{\gamma^{2}}{2})}{\beta^{2}},\quad
   m_{3}=\theta^{3}\kappa(\kappa+1)(\kappa+2)=\frac{\alpha(\alpha+\gamma^{2})(\alpha+\frac{\gamma^{2}}{2})}{\beta^{3}}.
\end{equation}
(\romannumeral4) \textbf{Representation via squared Bessel process:} Let $\mathrm{BESQ}_{(d,R_{0})}(s)$ denote the squared Bessel process of dimension $d:=4\alpha/\gamma^{2}$ starting from $R_{0}:=4X_{0}/\gamma^{2}$. Then the solution to (\ref{CIR-process}) admits the explicit representation $X_{t}=e^{-\beta t}\mathrm{BESQ}_{(d,R_{0})}\big(\frac{\gamma^{2}}{4\beta}(e^{\beta t}-1)\big),~t\geq 0$.\\
(\romannumeral5) \textbf{Transition density:} For $t>0$, define $\chi_{t}:=\frac{4\beta e^{-\beta t}}{\gamma^{2}(1-e^{-\beta t})},~\phi_{t}:=\chi_{t}e^{-\beta t}X_{0}$. Then the random variable $Y_{t}:=\chi_{t}X_{t}$ is non-central chi-square distributed with degrees of freedom $d$ and non-centrality parameter $2\phi_{t}$. Consequently, the transition density of $X_{t}=x$ is
\begin{equation}\nonumber
	f_{t}(x\lvert X_{0})=\chi_{t}e^{-\phi_{t}-\chi_{t}u}\Big(\frac{\chi_{t}x}{\phi_{t}}\Big)^{\frac{d}{4}-\frac{1}{2}}I_{\frac{d}{2}-1}\Big(2\sqrt{\chi_{t}\phi_{t}x}\Big),\qquad x>0.
\end{equation}
where $I_{q}(u)=\sum_{j=0}^{\infty}\frac{1}{j!\Gamma(j+q+1)}(\frac{u}{2})^{2j+q}$ denotes the modified Bessel function of the first kind.\\
(\romannumeral6) \textbf{$1/2$-H\"{o}lder continuity in $\mathcal{L}^{q}$:} for any $q>0$, there exists $C=C(q,T,\alpha,\beta,\gamma,X_{0})<\infty$ such that
\begin{equation}\nonumber
    \mathbb{E}\big[\lvert X_{t}-X_{s}\lvert^{q}\big] \leq C\lvert t-s\lvert^{q/2},\quad\text{ for any }s,t\in[0,T].
\end{equation}
\end{lemma}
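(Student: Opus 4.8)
The plan is to treat the squared Bessel representation in part (iv) as the structural backbone, deduce the transition density and the boundary behaviour from it, and obtain the remaining items either by specialising the CKLS properties of Lemma~\ref{ergodicity-of-CKLS} at $k=\tfrac12$ or by a direct stochastic-calculus estimate. First, for part (iv) I would verify the representation by a deterministic time change: writing $s(t):=\tfrac{\gamma^{2}}{4\beta}(e^{\beta t}-1)$ and $R_{s}:=\tfrac{4}{\gamma^{2}}e^{\beta t}X_{t}$, an application of It\^{o}'s formula to $e^{\beta t}X_{t}$ together with the substitution of the new clock $s(t)$ shows that $R$ solves the squared Bessel SDE $dR_{s}=d\,ds+2\sqrt{R_{s}}\,d\widehat{W}_{s}$ of dimension $d=4\alpha/\gamma^{2}$, where $\widehat{W}$ is a Brownian motion in the new time scale obtained from $W$ by the same time change and L\'evy's characterisation. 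Because the diffusion coefficient $\sqrt{\cdot}$ is only $1/2$-H\"{o}lder, pathwise uniqueness is not automatic; I would invoke the Yamada--Watanabe condition to secure it for both the CIR and the $\mathrm{BESQ}$ equations, so that the two laws coincide and the representation is exact (cf.\ \cite{alaya2012parameter}).

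Next I turn to (i), (v) and the explicit form in (iii). For the boundary classification (i) I would compute the scale density $s'(x)\propto x^{-2\alpha/\gamma^{2}}e^{2\beta x/\gamma^{2}}$ and the speed density $m'(x)\propto x^{2\alpha/\gamma^{2}-1}e^{-2\beta x/\gamma^{2}}$ and apply Feller's test of explosion \cite{feller1952parabolic}: the integrability of $s'$ near $0$ flips exactly at $2\alpha=\gamma^{2}$, giving the stated dichotomy (accessible and instantaneously reflecting when $2\alpha<\gamma^{2}$, inaccessible otherwise), while $\int^{\infty}s'(x)\,dx=\infty$ makes $\infty$ unattainable. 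For the transition density (v) I would transport the known time-$s$ law of $\mathrm{BESQ}_{(d,R_{0})}(s)$ — that of $s$ times a noncentral $\chi^{2}$ with $d$ degrees of freedom and noncentrality $R_{0}/s$ — back through the scaling and time change of (iv); collecting the Jacobian factors $\chi_{t},\phi_{t}$ and using the series for $I_{d/2-1}$ yields the displayed density. The invariant law in (iii) is the $k=\tfrac12$ case of Lemma~\ref{ergodicity-of-CKLS}'s (iv), but for the explicit $\Gamma(\kappa,\theta)$ form I would instead solve the stationary forward ODE $\tfrac12(\gamma^{2}x\,p)''+((\beta x-\alpha)p)'=0$, whose normalisable solution has $\kappa=2\alpha/\gamma^{2}$, $\theta=\gamma^{2}/(2\beta)$; positive Harris recurrence then follows from irreducibility of the nondegenerate diffusion together with the existence of this finite invariant measure, and the Gamma moment formula $\mathbb{E}[X^{q}]=\theta^{q}\Gamma(\kappa+q)/\Gamma(\kappa)$ gives $m_{1},m_{2},m_{3}$ via the continuous-time ergodic theorem for Harris processes.

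Finally, (ii) and (vi). The uniform moment bound (ii) is the $k=\tfrac12$ case of Lemma~\ref{ergodicity-of-CKLS}'s (iii), but can also be obtained directly by applying It\^{o} to $X_{t}^{q}$, taking expectations, and closing a Gr\"{o}nwall inequality, with the supremum-in-$t$ version following from the Burkholder--Davis--Gundy (BDG) inequality \cite{andersen2007moment}. For the $1/2$-H\"{o}lder bound (vi) I would follow exactly the template of Step~1 in the proof of Theorem~\ref{k-rate}: write $X_{t}-X_{s}=\int_{s}^{t}(\alpha-\beta X_{u})\,du+\gamma\int_{s}^{t}\sqrt{X_{u}}\,dW_{u}$, bound the drift part by $O_{p}(\lvert t-s\rvert)$ using (ii), and apply BDG to the martingale part to obtain $\mathbb{E}\big[\lvert\int_{s}^{t}\sqrt{X_{u}}\,dW_{u}\rvert^{q}\big]\lesssim\mathbb{E}\big[(\int_{s}^{t}X_{u}\,du)^{q/2}\big]\lesssim\lvert t-s\rvert^{q/2}$ after H\"{o}lder and (ii). I expect the main obstacle to be the boundary analysis when $2\alpha<\gamma^{2}$: the non-Lipschitz coefficient $\sqrt{x}$ makes both the pathwise uniqueness underlying (iv) and the precise description of $0$ as an instantaneously reflecting (rather than absorbing) boundary delicate, so care is needed to confirm that the process leaves $0$ continuously and spends zero Lebesgue time there — precisely the regime relevant to the transformed CIR-type model of this paper.
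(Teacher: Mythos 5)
The paper does not actually prove this lemma: its ``proof'' is a pointer to \cite{ning2025ckls}, \cite{jeanblanc2009mathematical} and \cite{alaya2013asymptotic}, so there is no internal argument to match step by step. What you propose is a self-contained reconstruction of the standard proofs those references contain, and the architecture is sound: scale density $s'(x)\propto x^{-2\alpha/\gamma^{2}}e^{2\beta x/\gamma^{2}}$ plus Feller's test for (i) (with the small caveat that attainability formally requires Feller's combined scale--speed criterion, though here the speed measure is finite near $0$, so integrability of $s'$ is indeed decisive); the stationary forward equation for the explicit $\Gamma(\kappa,\theta)$ law and the Harris/Birkhoff ergodic theorem for (iii); transporting the noncentral $\chi^{2}$ law of the squared Bessel process through the deterministic time change for (v); It\^{o}--Gr\"{o}nwall--BDG for (ii); and the drift/martingale split with BDG and Lyapunov interpolation for (vi), which is exactly the template of Step~1 in the proof of Theorem~\ref{k-rate}. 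Invoking Yamada--Watanabe to upgrade the identification in (iv) from equality in law to a pathwise statement is also the right move.

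There is, however, one concrete slip in (iv): your scaling and clock are mismatched. With $R_{s(t)}:=\frac{4}{\gamma^{2}}e^{\beta t}X_{t}$ and $s(t)=\frac{\gamma^{2}}{4\beta}(e^{\beta t}-1)$, It\^{o}'s formula gives drift $d\cdot\frac{4}{\gamma^{2}}ds$ and quadratic variation $4R_{s}\cdot\frac{4}{\gamma^{2}}ds$, so $R$ is a $\mathrm{BESQ}^{d}$ run at speed $4/\gamma^{2}$ in the clock $s$, not a $\mathrm{BESQ}^{d}$ solving $dR_{s}=d\,ds+2\sqrt{R_{s}}d\widehat{W}_{s}$. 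The consistent pairings are either $R_{s(t)}=e^{\beta t}X_{t}$ started from $X_{0}$ with the clock $s(t)=\frac{\gamma^{2}}{4\beta}(e^{\beta t}-1)$, or $R=\frac{4}{\gamma^{2}}e^{\beta t}X_{t}$ started from $4X_{0}/\gamma^{2}$ with the clock $\frac{1}{\beta}(e^{\beta t}-1)$; the two are linked by the scaling identity $c\,\mathrm{BESQ}_{(d,x)}(s/c)\overset{d}{=}\mathrm{BESQ}_{(d,cx)}(s)$. (You inherited this from the lemma itself, whose display is dimensionally inconsistent: at $t=0$ it forces $X_{0}=4X_{0}/\gamma^{2}$, so the stated $R_{0}$ matches the stated clock only when $\gamma=2$; the standard form, e.g.\ in \cite{jeanblanc2009mathematical}, starts the $\mathrm{BESQ}$ at $X_{0}$.) A second, smaller caveat: in (ii) you cannot apply It\^{o} directly to $x\mapsto x^{q}$ for small $q$ in the regime $2\alpha<\gamma^{2}$ relevant here, since $x^{q}$ is not $\mathcal{C}^{2}$ at the attainable boundary $0$; establish integer (or $q\geq 2$) moments by Gr\"{o}nwall and interpolate by Jensen--Lyapunov, or mollify with $(X_{t}+\varepsilon)^{q}$ and let $\varepsilon\downarrow 0$. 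Both issues are local repairs and do not undermine the overall plan.
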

\begin{proof}
    See e.g., \cite{ning2025ckls}, \cite{jeanblanc2009mathematical}, \cite{alaya2013asymptotic}.
\end{proof}

\section{Key Lemmas Underlying \texorpdfstring{\cite{prykhodko2025discretization}}{Prykhodko et al.}'s Main Result on CIR Drift Estimation}\label{bbb}
\begin{proposition}\label{lem:r-sq-increment}
For any $q>0$, there exists $C=C(q,T,\alpha,\beta,\gamma,X_{0})<\infty$ such that
\begin{equation}\nonumber
  \mathbb{E}\big[\lvert X_{t}^{2}-X_{s}^{2}\lvert^{q}\big]\leq C(t-s)^{q/2},\quad \text{ for any }s,t\in[0,T].
\end{equation}
\end{proposition}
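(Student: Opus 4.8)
The plan is to exploit the factorization $X_{t}^{2}-X_{s}^{2}=(X_{t}-X_{s})(X_{t}+X_{s})$, which reduces the claim to combining the $1/2$-H\"{o}lder continuity of $X_{t}$ itself (Lemma~\ref{ergodicity-of-CIR}'s (\romannumeral6)) with the uniform moment bounds of the CIR process (Lemma~\ref{ergodicity-of-CIR}'s (\romannumeral2)). First I would write, for $s,t\in[0,T]$ and using $X_{t},X_{s}\geq 0$,
\[
    \mathbb{E}\big[\lvert X_{t}^{2}-X_{s}^{2}\lvert^{q}\big]=\mathbb{E}\big[\lvert X_{t}-X_{s}\lvert^{q}(X_{t}+X_{s})^{q}\big],
\]
and then apply the Cauchy--Schwarz inequality to split the product:
\[
    \mathbb{E}\big[\lvert X_{t}^{2}-X_{s}^{2}\lvert^{q}\big]\leq\big(\mathbb{E}\big[\lvert X_{t}-X_{s}\lvert^{2q}\big]\big)^{1/2}\big(\mathbb{E}\big[(X_{t}+X_{s})^{2q}\big]\big)^{1/2}.
\]

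Next I would bound the two factors separately, both uniformly in $s,t\in[0,T]$. For the increment factor, Lemma~\ref{ergodicity-of-CIR}'s (\romannumeral6) applied with exponent $2q$ gives $\mathbb{E}[\lvert X_{t}-X_{s}\lvert^{2q}]\leq C_{1}\lvert t-s\lvert^{q}$ for a constant $C_{1}=C_{1}(q,T,\alpha,\beta,\gamma,X_{0})$; the use of the $\mathcal{L}^{2q}$-estimate (rather than $\mathcal{L}^{q}$) is precisely what produces the exponent $q$ here, which the square-root then halves to the desired $q/2$. For the sum factor, the elementary inequality $(a+b)^{2q}\lesssim a^{2q}+b^{2q}$ of the type~(\ref{inequalities}) together with the uniform moment bound of Lemma~\ref{ergodicity-of-CIR}'s (\romannumeral2) at order $2q$ yields
\[
    \mathbb{E}\big[(X_{t}+X_{s})^{2q}\big]\lesssim\mathbb{E}[X_{t}^{2q}]+\mathbb{E}[X_{s}^{2q}]\leq C_{2},
\]
with $C_{2}=C_{2}(q,T,\alpha,\beta,\gamma,X_{0})<\infty$ independent of $s,t$. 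Substituting both bounds gives
\[
    \mathbb{E}\big[\lvert X_{t}^{2}-X_{s}^{2}\lvert^{q}\big]\leq\big(C_{1}\lvert t-s\lvert^{q}\big)^{1/2}C_{2}^{1/2}=C\lvert t-s\lvert^{q/2},
\]
with $C:=\sqrt{C_{1}C_{2}}$, which is exactly the asserted bound with the claimed uniform dependence on $(q,T,\alpha,\beta,\gamma,X_{0})$.

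There is no serious obstacle here; the argument is a routine combination of a known increment estimate and a moment bound via Cauchy--Schwarz. The only points requiring a little care are invoking the H\"{o}lder estimate at order $2q$ rather than $q$ (so that Cauchy--Schwarz returns the correct half-power of $\lvert t-s\lvert$), and observing that both inputs from Lemma~\ref{ergodicity-of-CIR} are available for \emph{every} $q>0$, so that the final constant inherits exactly the stated parameter dependence and no extra moment or Feller-type restriction is needed.
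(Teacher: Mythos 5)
Your proposal is correct and follows exactly the paper's own argument: factor $X_{t}^{2}-X_{s}^{2}=(X_{t}-X_{s})(X_{t}+X_{s})$, apply Cauchy--Schwarz, and bound the two factors by Lemma~\ref{ergodicity-of-CIR}'s (\romannumeral6) at order $2q$ and the uniform moment bound of Lemma~\ref{ergodicity-of-CIR}'s (\romannumeral2), respectively. Your write-up merely spells out the elementary inequality $(a+b)^{2q}\lesssim a^{2q}+b^{2q}$ that the paper leaves implicit; nothing further is needed.
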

\begin{proof}
Note that $\mathbb{E}\big[\lvert X_{t}^{2}-X_{s}^{2}\lvert^{q}\big]=\mathbb{E}\big[(X_{t}+X_{s})^{q}\big\lvert X_{t}-X_{s}\big\lvert^{q}\big]\leq\sqrt{\mathbb{E}\big[(X_{t}+X_{s})^{2q}\big]\mathbb{E}\big[\lvert X_{t}-X_{s}\lvert^{2q}}\big]$ (Cauchy--Schwarz inequality).
The first term is bounded by Lemma~\ref{ergodicity-of-CIR}'s (\romannumeral2); the second term is bounded by Lemma~\ref{ergodicity-of-CIR}'s (\romannumeral6). Combining the two bounds yields the claim.
\end{proof}

\begin{lemma}\label{lem:BDG-CIR}[Moment bounds for averaged CIR martingale integrals with positive orders]\\
For $q'>0$ and $q>0$ there exists a positive constant $C_{q,q'}<\infty$ (independent of $T$) such that
\begin{equation}\nonumber
	\mathbb{E}\Big[\big\lvert\frac{1}{T}\int_{0}^{T}X_{t}^{q'}dW_{t}\big\lvert^{q}\Big]\leq C_{q,q'}T^{-q/2}.
\end{equation}
\end{lemma}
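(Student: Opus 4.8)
The plan is to apply the Burkholder--Davis--Gundy inequality to the continuous local martingale $M_T:=\int_{0}^{T}X_{t}^{q'}\,dW_{t}$ and then to control the resulting bracket term by the uniform-in-time moment bounds of the CIR process. By It\^{o}'s isometry the quadratic variation is $[M]_{T}=\int_{0}^{T}X_{t}^{2q'}\,dt$, so inequality (\ref{bdg}) gives
\[
    \mathbb{E}\big[\lvert M_{T}\lvert^{q}\big]\leq C_{q}\,\mathbb{E}\Big[\Big(\int_{0}^{T}X_{t}^{2q'}\,dt\Big)^{q/2}\Big].
\]
It then suffices to show that the right-hand side is $O(T^{q/2})$ with a constant not depending on $T$, since dividing through by $T^{q}$ (the $1/T$ prefactor raised to the power $q$) turns $T^{q/2}$ into the desired $T^{-q/2}$.

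First I would split according to whether $q/2\geq1$ or $q/2<1$. For $q\geq2$, writing $\int_{0}^{T}X_{t}^{2q'}dt=T\cdot\frac{1}{T}\int_{0}^{T}X_{t}^{2q'}dt$ and applying Jensen's inequality to the convex map $x\mapsto x^{q/2}$ against the probability measure $dt/T$ yields $\big(\int_{0}^{T}X_{t}^{2q'}dt\big)^{q/2}\leq T^{q/2-1}\int_{0}^{T}X_{t}^{qq'}dt$, so that after taking expectations and using Tonelli,
\[
    \mathbb{E}\Big[\Big(\int_{0}^{T}X_{t}^{2q'}dt\Big)^{q/2}\Big]\leq T^{q/2}\sup_{t\geq0}\mathbb{E}\big[X_{t}^{qq'}\big].
\]
For $0<q<2$ the map $x\mapsto x^{q/2}$ is concave, so Jensen runs the other way: with $Y=\int_{0}^{T}X_{t}^{2q'}dt$ one has $\mathbb{E}[Y^{q/2}]\leq(\mathbb{E}[Y])^{q/2}\leq\big(T\sup_{t\geq0}\mathbb{E}[X_{t}^{2q'}]\big)^{q/2}$. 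In both regimes the bound is $T^{q/2}$ times a finite constant assembled from $\sup_{t\geq0}\mathbb{E}[X_{t}^{p}]$ for $p\in\{qq',2q'\}$; setting $C_{q,q'}:=C_{q}\max\{\sup_{t\geq0}\mathbb{E}[X_{t}^{qq'}],\,(\sup_{t\geq0}\mathbb{E}[X_{t}^{2q'}])^{q/2}\}$ then delivers the claim. These are routine computations once the moment factor is under control.

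The one point requiring genuine care --- and the main obstacle --- is that this moment factor must be bounded \emph{uniformly in $T$}, whereas Lemma~\ref{ergodicity-of-CIR}'s (\romannumeral2) only supplies a finite-horizon bound whose constant may itself grow with $T$. In the ergodic regime $\beta>0$ the required uniform control does hold: the stationary Gamma law of Lemma~\ref{ergodicity-of-CIR}'s (\romannumeral3) has finite moments of every positive order, and the transient moments $t\mapsto\mathbb{E}[X_{t}^{p}]$ stay bounded on $[0,\infty)$ by a combination of the initial value $X_{0}$ and the stationary moment --- most transparently via the squared-Bessel representation of Lemma~\ref{ergodicity-of-CIR}'s (\romannumeral4), or via a Gr\"onwall estimate for the moment ODEs. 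Hence $\sup_{t\geq0}\mathbb{E}[X_{t}^{p}]<\infty$ for every $p>0$, which is precisely what furnishes a genuinely $T$-free constant $C_{q,q'}$ and completes the argument.
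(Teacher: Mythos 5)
Your proof is correct and follows essentially the same route as the paper: BDG plus H\"{o}lder/Jensen for $q\geq 2$, and a concavity argument for $0<q<2$ (the paper applies Lyapunov's $\mathcal{L}^{q}$-monotonicity to $M_{T}$ itself and then It\^{o}'s isometry, while you apply BDG first and Jensen to the bracket --- equivalent computations). Your closing point about needing the moment factor bounded \emph{uniformly in $T$} is a valid refinement rather than a deviation: the paper simply invokes its finite-horizon bound, whose stated constant $C(q,T,\alpha,\beta,\gamma,X_{0})$ nominally depends on $T$, whereas your appeal to $\sup_{t\geq 0}\mathbb{E}[X_{t}^{p}]<\infty$ in the ergodic regime $\beta>0$ is exactly what makes the claimed $T$-free constant $C_{q,q'}$ legitimate.
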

\begin{proof}
\noindent \textit{Case $q\geq 2$:} Set $M_{T}:=\int_{0}^{T}X_{t}^{q'}dW_{t}$. By BDG (\ref{bdg}) and H\"{o}lder inequalities,
\begin{equation}\nonumber
	\mathbb{E}\big[\lvert M_{T}\lvert^{q}\big]\leq C_{q}\mathbb{E}\Big(\int_{0}^{T}X_{t}^{2q'}dt\Big)^{q/2}\leq C_{q}T^{q/2-1}\int_{0}^{T}\mathbb{E}\big[X_{t}^{qq'}\big]dt\leq C_{q,q'}T^{q/2},
\end{equation}
following from Lemma~\ref{ergodicity-of-CIR}'s (\romannumeral2) with order $qq'>0$. Thus $\mathbb{E}\big[\lvert T^{-1}M_{T}\lvert^{q}\big]\leq C_{q,q'}T^{-q/2}$.\\
\noindent\textit{Case $0<q<2$:} By Lyapunov's monotonicity of $\mathcal{L}_{q}$-norms, $\mathbb{E}\big[\lvert M_{T}\lvert^{q}\big]\leq\big(\mathbb{E}\big[\lvert M_{T}\lvert^{2}\big]\big)^{q/2}$. Using the It\^{o} isometry and Lemma~\ref{ergodicity-of-CIR}'s (\romannumeral2) with order $2q'>0$, $\mathbb{E}\big[\lvert M_{T}\lvert^{2}\big]=\mathbb{E}\big[\int_{0}^{T}X_{t}^{2q'}dt\big]\leq T\sup_{t\in[0,T]}\mathbb{E}\big[X_{t}^{2q'}\big]\leq C_{q'}T$. Hence $\mathbb{E}\big[\lvert M_{T}\lvert^{q}\big]\leq (C_{q'}T)^{q/2}$ and $\mathbb{E}\big[\lvert T^{-1}M_{T}\lvert^{q}\big]\leq C_{q,q'}T^{-q/2}$.
\renewcommand{\qed}{}
\end{proof}

\begin{lemma}\label{ergod-law-first-moment}[$\mathcal{L}^{q}$-Convergence Rate of $\frac{1}{T}\int_{0}^{T}X_{t}dt$]
\begin{align}\nonumber
    \frac{1}{T}\int_{0}^{T}X_{t}dt=\frac{\alpha}{\beta}+\frac{X_{0}-X_{T}}{\beta T}+\frac{\gamma}{\beta T}\int_{0}^{T}X_{t}^{\frac{1}{2}}dW_{t}.
\end{align}
Moreover, for any $q>0$
\begin{align}\nonumber
    \mathbb{E}\Big[\big\lvert\frac{1}{T}\int_{0}^{T}X_{t}dt-\frac{\alpha}{\beta}\big\lvert^{q}\Big]\lesssim T^{-q/2}.\nonumber
\end{align}
\end{lemma}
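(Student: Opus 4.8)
The plan is to treat the two assertions in turn: first derive the exact decomposition by integrating the dynamics, and then estimate the two resulting error terms.

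For the identity, I would integrate the CIR SDE \eqref{CIR-process} over $[0,T]$. This gives
\[
X_{T}-X_{0}=\int_{0}^{T}(\alpha-\beta X_{t})\,dt+\gamma\int_{0}^{T}X_{t}^{1/2}\,dW_{t}
=\alpha T-\beta\int_{0}^{T}X_{t}\,dt+\gamma\int_{0}^{T}X_{t}^{1/2}\,dW_{t},
\]
and solving for $\int_{0}^{T}X_{t}\,dt$ yields $\beta\int_{0}^{T}X_{t}\,dt=\alpha T+(X_{0}-X_{T})+\gamma\int_{0}^{T}X_{t}^{1/2}\,dW_{t}$. Dividing by $\beta T$ produces exactly the claimed identity. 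This step is purely algebraic; the only thing to check is that the stochastic integral is well defined, which follows from $\sup_{t\le T}\mathbb{E}[X_{t}]<\infty$ in Lemma~\ref{ergodicity-of-CIR}'s (\romannumeral2).

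For the moment bound, I would subtract $\alpha/\beta$ and use the identity to write
\[
\frac{1}{T}\int_{0}^{T}X_{t}\,dt-\frac{\alpha}{\beta}
=\frac{X_{0}-X_{T}}{\beta T}+\frac{\gamma}{\beta T}\int_{0}^{T}X_{t}^{1/2}\,dW_{t}.
\]
Applying the elementary $c_{r}$-type inequality of the kind recorded in \eqref{inequalities} to split the $q$-th moment across the two summands, it suffices to bound each separately. The boundary term is controlled by the uniform moment bound in Lemma~\ref{ergodicity-of-CIR}'s (\romannumeral2): since $\mathbb{E}\big[|X_{0}-X_{T}|^{q}\big]\lesssim\mathbb{E}[X_{0}^{q}]+\mathbb{E}[X_{T}^{q}]\le C$, one obtains $\mathbb{E}\big[|(X_{0}-X_{T})/(\beta T)|^{q}\big]\lesssim T^{-q}$. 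The martingale term is handled directly by Lemma~\ref{lem:BDG-CIR} with $q'=1/2$, which gives $\mathbb{E}\big[|\tfrac{1}{T}\int_{0}^{T}X_{t}^{1/2}\,dW_{t}|^{q}\big]\lesssim T^{-q/2}$.

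Combining the two estimates, the dominant contribution is the martingale term of order $T^{-q/2}$, while the boundary term of order $T^{-q}$ is asymptotically negligible (recall $T\to\infty$, so $T^{-q}\le T^{-q/2}$), giving the claimed rate $\lesssim T^{-q/2}$. I do not anticipate a serious obstacle: the heart of the argument is simply the already-established martingale moment bound of Lemma~\ref{lem:BDG-CIR}, and the only point requiring minor care is that the $c_{r}$-inequality must be applied with the correct constant according to whether $q<1$ or $q\ge 1$, which is precisely the case split already recorded in \eqref{inequalities}.
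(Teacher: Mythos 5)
Your proposal is correct and follows essentially the same route as the paper's proof: integrate the SDE to obtain the exact decomposition, split the $q$-th moment via the $c_{r}$-inequality \eqref{inequalities} with the $0<q<1$ versus $q\geq 1$ case distinction, bound the boundary term by the uniform moment bound of Lemma~\ref{ergodicity-of-CIR} and the martingale term by Lemma~\ref{lem:BDG-CIR} with $q'=\tfrac{1}{2}$, and conclude using $T^{-q}\leq T^{-q/2}$ for large $T$. The only cosmetic difference is that you cite Lemma~\ref{ergodicity-of-CIR}'s (\romannumeral2) for the moment bound where the paper cites (\romannumeral3), and your citation is if anything the more apt one.
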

\begin{proof}
With $m_{1}:=\frac{\alpha}{\beta}$, integrating (\ref{CIR-process}) over $[0,T]$ and rearranging yields
\begin{equation}\nonumber
	\frac{1}{T}\int_{0}^{T}X_{t}dt-m_{1}=\frac{X_{0}-X_{T}}{\beta T}+\frac{\gamma}{bT}\int_{0}^{T}\sqrt{X_{s}}dW_{s}.
\end{equation}
Define $\rho_{1}(T):=\frac{X_{0}-X_{T}}{\beta \sqrt{T}}~\Leftrightarrow~\frac{X_{0}-X_{T}}{\beta T}=T^{-\frac{1}{2}}\rho_{1}(T)$. Hence, by the elementary inequality $\lvert x+y+z\lvert^{q}\leq C_{q}\sum\lvert x\lvert^{q}$ for $q\geq 1$ (respectively $\lvert x+y+z\lvert^{q}\leq \sum\lvert x\lvert^{q}$ for $0<q<1$):
\begin{equation}\nonumber
	\mathbb{E}\Big[\big\lvert\frac{1}{T}\int_{0}^{T}X_{t}dt-m_{1}\big\lvert^{q}\Big]\leq C_{q}\Big(\frac{\lvert X_{0}\lvert^{q}+\mathbb{E}[X_{T}^{q}]}{\beta^{q}T^{q}}+\frac{\gamma^{q}}{\beta^{q}T^{q}}\mathbb{E}\Big[\big\lvert\int_{0}^{T}\sqrt{X_{s}}dW_{s}\big\lvert^{q}\Big]\Big).
\end{equation}
By Lemma~\ref{ergodicity-of-CIR}'s (\romannumeral3), $\mathbb{E}[X_{T}^{q}]<\infty$, so the first two terms are $O(T^{-q})$. By Lemma~\ref{lem:BDG-CIR},
\begin{equation}\nonumber
	\mathbb{E}\Big[\big\lvert\int_{0}^{T}\sqrt{X_{s}}dW_{s}\big\lvert^{q}\Big]\leq C_{q}T^{q/2}\quad\Longrightarrow\quad\text{the third term is no larger than }\frac{\gamma^{q}}{\beta^{q}T^{q}}C_{q}T^{q/2}=C_{q}T^{-q/2}.
\end{equation}
Combining the three bounds and noting that $T^{-q}\leq T^{-q/2}$ for $q>0$ yields the desired result.
\renewcommand{\qed}{}
\end{proof}

\begin{lemma}\label{ergod-law-second-moment}[$\mathcal{L}^{q}$-Convergence Rate of $\frac{1}{T}\int_{0}^{T}X_{t}^{2}dt$]
\begin{align}\nonumber
    \frac{1}{T}\int_{0}^{T}X_{t}^{2}dt=\frac{\alpha\big(2\alpha+\gamma^{2}\big)}{2\beta^{2}}+\frac{X_{0}^{2}-X_{T}^{2}}{2\beta T}+\frac{2\alpha+\gamma^{2}}{2\beta T}\Big(\int_{0}^{T}X_{t}dt-\frac{\alpha T}{\beta}\Big)+\frac{\gamma}{\beta T}\int_{0}^{T}X_{t}^{\frac{3}{2}}dW_{t}.
\end{align}
Moreover, for any $q>0$
\begin{align}\nonumber
    \mathbb{E}\Big[\big\lvert\frac{1}{T}\int_{0}^{T}X_{t}^{2}dt-\frac{\alpha(\alpha+\frac{\gamma^{2}}{2})}{\beta^{2}}\big\lvert^{q}\Big]\lesssim T^{-q/2}.\nonumber
\end{align}    
\end{lemma}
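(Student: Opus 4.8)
The plan is to mirror the proof of Lemma~\ref{ergod-law-first-moment}, replacing the direct integration of the SDE by an application of It\^{o}'s lemma to $x\mapsto x^{2}$. First I would apply It\^{o}'s formula to $X_{t}^{2}$: since $f(x)=x^{2}$ has $f'(x)=2x$, $f''(x)=2$, and the diffusion coefficient in (\ref{CIR-process}) is $\gamma\sqrt{X_{t}}$, I obtain
\begin{equation}\nonumber
  d\big(X_{t}^{2}\big)=\big[(2\alpha+\gamma^{2})X_{t}-2\beta X_{t}^{2}\big]dt+2\gamma X_{t}^{3/2}dW_{t}.
\end{equation}
Integrating over $[0,T]$, isolating $2\beta\int_{0}^{T}X_{t}^{2}dt$, and dividing by $2\beta T$ yields
\begin{equation}\nonumber
  \frac{1}{T}\int_{0}^{T}X_{t}^{2}dt=\frac{2\alpha+\gamma^{2}}{2\beta T}\int_{0}^{T}X_{t}dt+\frac{X_{0}^{2}-X_{T}^{2}}{2\beta T}+\frac{\gamma}{\beta T}\int_{0}^{T}X_{t}^{3/2}dW_{t}.
\end{equation}
To recover the stated identity I would split $\int_{0}^{T}X_{t}dt=\big(\int_{0}^{T}X_{t}dt-\alpha T/\beta\big)+\alpha T/\beta$; the constant piece contributes exactly $\alpha(2\alpha+\gamma^{2})/(2\beta^{2})$, which is the asymptotic mean $m_{2}$ from Lemma~\ref{ergodicity-of-CIR}'s (\romannumeral3), while the centered piece reproduces the middle term of the claimed identity.

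For the convergence rate I would subtract $m_{2}=\alpha(2\alpha+\gamma^{2})/(2\beta^{2})$ and bound the three remaining terms in $\mathcal{L}^{q}$ separately, using $\lvert x+y+z\rvert^{q}\lesssim\lvert x\rvert^{q}+\lvert y\rvert^{q}+\lvert z\rvert^{q}$. The boundary term $\frac{X_{0}^{2}-X_{T}^{2}}{2\beta T}$ is $O(T^{-q})$ in $\mathcal{L}^{q}$ because $\sup_{t\in[0,T]}\mathbb{E}[X_{t}^{2q}]<\infty$ by Lemma~\ref{ergodicity-of-CIR}'s (\romannumeral2). The centered first-moment term equals $\frac{2\alpha+\gamma^{2}}{2\beta}\big(\frac{1}{T}\int_{0}^{T}X_{t}dt-\frac{\alpha}{\beta}\big)$, whose $\mathcal{L}^{q}$-norm is $\lesssim T^{-q/2}$ directly from Lemma~\ref{ergod-law-first-moment}. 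The martingale term $\frac{\gamma}{\beta}\cdot\frac{1}{T}\int_{0}^{T}X_{t}^{3/2}dW_{t}$ is $\lesssim T^{-q/2}$ by Lemma~\ref{lem:BDG-CIR} applied with $q'=3/2$. Since $T^{-q}\leq T^{-q/2}$ for $T\geq1$, the three bounds combine to give the claimed $T^{-q/2}$ rate.

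I do not expect a serious obstacle: the argument is a bootstrap in which the second-moment average is reduced, via It\^{o}'s lemma, to the already-controlled first-moment average plus boundary and martingale remainders, exactly paralleling the chain $m_{1}\to m_{2}$ in the ergodic moment formulas. The only points requiring care are (i) checking that $\int_{0}^{T}X_{t}^{3/2}dW_{t}$ is a genuine square-integrable martingale rather than merely a local one, which follows from $\mathbb{E}\big[\int_{0}^{T}X_{t}^{3}dt\big]<\infty$ via the uniform moment bound; and (ii) ensuring the constants in the BDG and moment estimates are independent of $T$, so that the $T^{-q/2}$ scaling is genuinely uniform. Both are guaranteed by Lemma~\ref{ergodicity-of-CIR}'s (\romannumeral2) and Lemma~\ref{lem:BDG-CIR}.
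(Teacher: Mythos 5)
Your proof is correct and follows essentially the same route as the paper: It\^{o}'s lemma applied to $x\mapsto x^{2}$, integration and rearrangement to obtain the stated identity, then termwise $\mathcal{L}^{q}$ bounds via the elementary inequality, the uniform moment bound of Lemma~\ref{ergodicity-of-CIR}, and Lemma~\ref{lem:BDG-CIR} with $q'=\tfrac{3}{2}$. The only (harmless) difference is that you bound the centered first-moment term by citing Lemma~\ref{ergod-law-first-moment} directly, whereas the paper substitutes the integrated SDE representation of $\int_{0}^{T}X_{t}dt$ and bounds the resulting $X_{t}^{1/2}$-martingale integral and boundary remainder $\rho_{2}(T)$ separately---an equivalent argument that additionally records the explicit martingale decomposition reused later in Lemma~\ref{continuous-convergence-in-Lq}.
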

\begin{proof}
Apply It\^{o}'s lemma to $x\mapsto x^{2}$ yields 
\begin{equation}\nonumber
    d(X_{t}^{2})=\big(2\alpha X_{t}-2\beta X_{t}^{2}+\gamma^{2}X_{t}\big)dt+2\gamma X_{t}^{\frac{3}{2}}dW_{t}.
\end{equation}
With $m_{2}:=\frac{\alpha(\alpha+\frac{\gamma^{2}}{2})}{\beta^{2}}$, integrating this over $[0,T]$ and solving for $\int_{0}^{T}X_{t}^{2}dt$ yields
\begin{equation}\nonumber
    \int_{0}^{T}X_{t}^{2}dt=m_{2}T+\frac{X_{0}^{2}-X_{T}^{2}}{2\beta}+\frac{\alpha+\frac{\gamma^{2}}{2}}{\beta}\Big(\int_{0}^{T}X_{t}dt-\frac{\alpha T}{\beta}\Big)+\frac{\gamma}{\beta}\int_{0}^{T}X_{t}^{\frac{3}{2}}dW_{t}.
\end{equation}
Integrate (\ref{CIR-process}) to represent $\int_{0}^{T}X_{t}dt$:
\begin{equation}\nonumber
	X_{T}-X_{0}=\alpha T-\beta\int_{0}^{T}X_{t}dt+\gamma\int_{0}^{T}X_{t}^{\frac{1}{2}}dW_{t}~\Longrightarrow~
    \int_{0}^{T}X_{t}dt=\frac{\alpha}{\beta}T+\frac{\gamma}{\beta}\int_{0}^{T}X_{t}^{\frac{1}{2}}dW_{t}-\frac{X_{T}-X_{0}}{\beta}.
\end{equation}
Substituting expression of $\int_{0}^{T}X_{t}dt$ into that of $\int_{0}^{T}X_{t}^{2}dt$ yields the centered decomposition
\begin{align}\nonumber
    &\frac{1}{T}\int_{0}^{T}X_{t}^{2}dt-m_{2}=\frac{\gamma(\alpha+\frac{\gamma^{2}}{2})}{\beta^{2}}\frac1T\int_{0}^{T}X_{t}^{\frac{1}{2}}dW_{t}+\frac{\gamma}{\beta}\frac{1}{T}\int_{0}^{T}X_{t}^{\frac{3}{2}}dW_{t}+T^{-\frac{1}{2}}\rho_{2}(T),\\
	&\text{where}~ T^{-\frac{1}{2}}\rho_{2}(T):=\frac{X_{0}^{2}-X_{T}^{2}}{2\beta T}+\frac{\alpha+\frac{\gamma^{2}}{2}}{\beta}\frac{X_{0}-X_{T}}{\beta T}.\nonumber
\end{align}
Control the $q$-th moments of the three terms on the right-hand side. Firstly, by Lemma~\ref{lem:BDG-CIR} with $q'=\frac{1}{2}$ and $q'=\frac{3}{2}$, there exists a common constant $C_{q}>0$ for the two such that
\begin{equation}\nonumber
	\mathbb{E}\Big[\big\lvert\frac{1}{T}\int_{0}^{T}X_{s}^{\frac{1}{2}}dW_{s}\big\lvert^{q}\Big]\leq C_{q}T^{-q/2},\quad \mathbb{E}\Big[\big\lvert\frac{1}{T}\int_{0}^{T}X_{s}^{\frac{3}{2}}dW_{s}\big\lvert^{q}\Big]\leq C_{q}T^{-q/2}.
\end{equation}
Secondly, using the uniform moment Lemma~\ref{ergodicity-of-CIR}'s (\romannumeral3), we obtain
\begin{equation}\nonumber
	\mathbb{E}\big[\lvert\rho_{2}(T)\lvert^{q}\big]\leq C_{q} \quad\Longrightarrow\quad \mathbb{E}\big[\lvert T^{-\frac{1}{2}}\rho_{2}(T)\lvert^{q}\big]\leq C_{q}T^{-q/2}.
\end{equation}
Combining bounds for the two integral terms and for $T^{-\frac{1}{2}}\rho_{2}(T)$, using $\lvert x+y+z\lvert^{q}\leq C_{q}\sum\lvert x\lvert^{q}$ for $q\geq 1$ (respectively $\lvert x+y+z\lvert^{q}\leq \sum\lvert x\lvert^{q}$ for $0<q<1$), we obtain the desired result.
\end{proof}

\begin{lemma}\label{ergod-law-third-moment}[$\mathcal{L}^{q}$-Convergence Rate of $\frac{1}{T}\int_{0}^{T}X_{t}^{3}dt$]
\begin{align}\nonumber
    &\frac{1}{T}\int_{0}^{T}X_{t}^{3}dt=\frac{\alpha(\alpha+\gamma^{2})(2\alpha+\gamma^{2})}{2\beta^{3}}+\frac{X_{0}^{3}-X_{T}^{3}}{3\beta T}+\frac{\alpha+\gamma^{2}}{\beta}\Big(\frac{1}{T}\int_{0}^{T}X_{t}^{2}dt+\frac{\alpha(2\alpha+\gamma^{2})}{2\beta^{2}}\Big)+\frac{\gamma}{\beta T}\int_{0}^{T}X_{t}^{\frac{5}{2}}dW_{t}.
\end{align}
Moreover, for any $q>0$
\begin{align}\nonumber
    \mathbb{E}\Big[\big\lvert\frac{1}{T}\int_{0}^{T}X_{t}^{3}dt-\frac{\alpha(\alpha+\gamma^{2})(\alpha+\frac{\gamma^{2}}{2})}{\beta^{3}}\big\lvert^{q}\Big]\lesssim T^{-q/2}.
\end{align}    
\end{lemma}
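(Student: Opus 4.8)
The plan is to mirror the derivations of Lemmas~\ref{ergod-law-first-moment} and~\ref{ergod-law-second-moment}, now using the cubic test function and \emph{bootstrapping} off the already-established $\mathcal{L}^{q}$-rate for the second moment. First I would apply It\^{o}'s lemma to $x\mapsto x^{3}$; using $d[X]_{t}=\gamma^{2}X_{t}\,dt$ this gives
\[
 d(X_{t}^{3})=\big(3(\alpha+\gamma^{2})X_{t}^{2}-3\beta X_{t}^{3}\big)dt+3\gamma X_{t}^{5/2}dW_{t}.
\]
Integrating over $[0,T]$ and solving the resulting linear relation for $\int_{0}^{T}X_{t}^{3}\,dt$ yields
\[
 \frac{1}{T}\int_{0}^{T}X_{t}^{3}\,dt=\frac{\alpha+\gamma^{2}}{\beta}\,\frac{1}{T}\int_{0}^{T}X_{t}^{2}\,dt+\frac{X_{0}^{3}-X_{T}^{3}}{3\beta T}+\frac{\gamma}{\beta T}\int_{0}^{T}X_{t}^{5/2}dW_{t},
\]
which is the representation in the statement, up to rearrangement of the constant terms.

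Next I would center this identity by subtracting $m_{3}$. The crucial algebraic check is that the leading term reproduces the claimed limit: since $\frac{1}{T}\int_{0}^{T}X_{t}^{2}\,dt\to m_{2}=\frac{\alpha(\alpha+\gamma^{2}/2)}{\beta^{2}}$, one has $\frac{\alpha+\gamma^{2}}{\beta}\,m_{2}=\frac{\alpha(\alpha+\gamma^{2})(\alpha+\gamma^{2}/2)}{\beta^{3}}=m_{3}$, so the centered expression reads
\[
 \frac{1}{T}\int_{0}^{T}X_{t}^{3}\,dt-m_{3}=\frac{\alpha+\gamma^{2}}{\beta}\Big(\frac{1}{T}\int_{0}^{T}X_{t}^{2}\,dt-m_{2}\Big)+\frac{X_{0}^{3}-X_{T}^{3}}{3\beta T}+\frac{\gamma}{\beta T}\int_{0}^{T}X_{t}^{5/2}dW_{t}.
\]

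I would then bound the $q$-th moment of each of the three terms. The first term is controlled \emph{directly by Lemma~\ref{ergod-law-second-moment}}, which already delivers $\mathbb{E}\big[\lvert T^{-1}\int_{0}^{T}X_{t}^{2}dt-m_{2}\rvert^{q}\big]\lesssim T^{-q/2}$; this is the recursive step that closes the argument. The boundary term $\frac{X_{0}^{3}-X_{T}^{3}}{3\beta T}$ is $O(T^{-q})$ because $\mathbb{E}[X_{T}^{3q}]<\infty$ by Lemma~\ref{ergodicity-of-CIR}'s (\romannumeral3). The martingale term is handled by Lemma~\ref{lem:BDG-CIR} with $q'=5/2$, giving $\mathbb{E}\big[\lvert T^{-1}\int_{0}^{T}X_{t}^{5/2}dW_{t}\rvert^{q}\big]\le C_{q}T^{-q/2}$. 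Combining the three via $\lvert x+y+z\rvert^{q}\le C_{q}(\lvert x\rvert^{q}+\lvert y\rvert^{q}+\lvert z\rvert^{q})$ for $q\ge1$ (and its subadditive version for $0<q<1$), together with $T^{-q}\le T^{-q/2}$, yields the stated rate.

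Rather than a genuine obstacle, the two points to watch are: (i) carrying out the constant collapse $\frac{\alpha+\gamma^{2}}{\beta}m_{2}=m_{3}$ so that centering against $m_{3}$ is legitimate; and (ii) checking that Lemma~\ref{lem:BDG-CIR} applies at order $q'=5/2$, which requires positive moments of $X_{t}$ up to order $\max\{5q/2,\,5\}$. All of these are finite by Lemma~\ref{ergodicity-of-CIR}'s (\romannumeral3), which guarantees every positive moment of the CIR process regardless of Feller's condition; since only positive powers of $X_{t}$ enter, the failure of Feller's condition in our transformed model causes no difficulty here.
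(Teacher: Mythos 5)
Your proposal is correct and takes essentially the same route as the paper's proof: the identical It\^{o} computation for $x\mapsto x^{3}$, the same centering via the constant collapse $\frac{\alpha+\gamma^{2}}{\beta}m_{2}=m_{3}$, Lemma~\ref{ergod-law-second-moment} for the recursive second-moment term, Lemma~\ref{lem:BDG-CIR} at order $q'=\frac{5}{2}$ for the martingale term, and the uniform moment bound for the boundary term. Incidentally, your centered identity with the minus sign, $\frac{\alpha+\gamma^{2}}{\beta}\big(\frac{1}{T}\int_{0}^{T}X_{t}^{2}dt-m_{2}\big)$, matches what the paper's proof actually derives; the plus sign appearing in the lemma's displayed representation is a typographical error.
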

\begin{proof}
\noindent Apply It\^{o}'s lemma to $x\mapsto x^{3}$ yields: 
\begin{equation}\nonumber
    d(X_{t}^{3})=3\big((\alpha-\beta X_{t})X_{t}^{2}+\frac{\gamma^{2}}{2}X_{t}^{2}\big)dt+3\gamma X_{t}^{\frac{5}{2}}dW_{t}=3\big((\alpha+\gamma^{2})X_{t}^{2}-\beta X_{t}^{3}\big)dt+3\gamma X_{t}^{\frac{5}{2}}dW_{t}.
\end{equation}
Integrating this over $[0,T]$ and rearranging terms:
\begin{equation}\nonumber
	\int_{0}^{T}X_{t}^{3}dt=\frac{\alpha+\gamma^{2}}{\beta}\int_{0}^{T}X_{t}^{2}dt+\frac{X_{0}^{3}-X_{T}^{3}}{3\beta}+\frac{\gamma}{\beta}\int_{0}^{T}X_{t}^{\frac{5}{2}}dW_{t}.
\end{equation}
With $m_{3}:=\frac{\alpha(\alpha+\gamma^{2})(\alpha+\frac{\gamma^{2}}{2})}{\beta^{3}}=\frac{\alpha+\gamma^{2}}{\beta}\cdot\frac{\alpha(\alpha+\frac{\gamma^{2}}{2})}{\beta^{2}}$ and the identity $\int_{0}^{T}X_{t}^{2}dt\equiv\frac{\alpha(\alpha+\frac{\gamma^{2}}{2})}{\beta^{2}}T+\big(\int_{0}^{T}X_{t}^{2}dt-\frac{\alpha(\alpha+\frac{\gamma^{2}}{2})}{\beta^{2}}T\big)$, dividing by $T$, the centered decomposition yields
\begin{equation}\nonumber
	\frac{1}{T}\int_{0}^{T}X_{t}^{3}dt-m_{3}=\frac{X_{0}^{3}-X_{T}^{3}}{3\beta T}+\frac{\alpha+\gamma^{2}}{\beta}\Big(\frac{1}{T}\int_{0}^{T}X_{t}^{2}dt-\frac{\alpha(\alpha+\frac{\gamma^{2}}{2})}{\beta^{2}}\Big)+\frac{\gamma}{\beta}\frac{1}{T}\int_{0}^{T}X_{t}^{\frac{5}{2}}dW_{t},\nonumber
\end{equation}
which yields the first claim of the lemma. From the elementary inequality $\lvert x+y+z\lvert^{q}\leq C_{q}\sum\lvert x\lvert^{q}$ for $q\geq 1$ (respectively $\lvert x+y+z\lvert^{q}\leq \sum\lvert x\lvert^{q}$ for $0<q<1$), we obtain that
\begin{align}\nonumber
	&\mathbb{E}\Big[\big\lvert\frac{1}{T}\int_{0}^{T}X_{t}^{3}dt-m_{3}\big\lvert^{q}\Big]\\
    \lesssim&\frac{\lvert X_{0}\lvert^{3q}+\mathbb{E}\big[\lvert X_{T}\lvert^{3q}\big]}{(3\beta T)^{q}}+\Big(\frac{\alpha+\gamma^{2}}{\beta}\Big)^{q}\mathbb{E}\Big[\big\lvert\frac{1}{T}\int_{0}^{T}X_{t}^{2}dt-\frac{\alpha(\alpha+\frac{\gamma^{2}}{2})}{\beta^{2}}\big\lvert^{q}\Big]+\Big(\frac{\gamma}{\beta}\Big)^{q}\mathbb{E}\Big[\big\lvert\frac{1}{T}\int_{0}^{T}X_{t}^{\frac{5}{2}}dW_{t}\big\lvert^{q}\Big].\nonumber
\end{align}
By Lemma~\ref{ergodicity-of-CIR}'s (\romannumeral3) $\frac{X_{0}^{3}-X_{T}^{3}}{3\beta T}=O(T^{-q})$. For $\frac{\alpha+\gamma^{2}}{\beta}\big(\frac{1}{T}\int_{0}^{T}X_{t}^{2}dt-\frac{\alpha(\alpha+\frac{\gamma^{2}}{2})}{\beta^{2}}\big)$, invoke the result in Lemma~\ref{ergod-law-second-moment}. For $\frac{\gamma}{\beta}\frac{1}{T}\int_{0}^{T}X_{t,}^{\frac{5}{2}}dW_{t}$, apply Lemma~\ref{lem:BDG-CIR} with $q=\frac{5}{2}$: $\mathbb{E}\big[\lvert\frac{1}{T}\int_{0}^{T}X_{t}^{\frac{5}{2}}dW_{t}\big\lvert^{q}\big]\leq C T^{-q/2}$. Plugging these bounds into the error bound yields the second claim of the lemma.
\end{proof}

\begin{lemma}\label{lem:ergodic-sum-first-order}[$\int_{0}^{n\Delta}{X}_{t}dt$: $\mathcal{L}^{q}$ and almost-sure Discretization Error for the Riemann sum]\\
Fix $\omega>0$ ($\Delta\to0$ and $n\Delta\to\infty$). Then for any $q>0$:
\begin{equation}\nonumber
	\mathbb{E}\Big[\big\lvert\frac{1}{{n\Delta}}\int_{0}^{n\Delta}X_{t}dt-\frac{1}{n\Delta}\sum_{i=1}^{n}X_{(i-1)\Delta}\Delta\big\lvert^{q}\Big]\lesssim (n\Delta)^{-\omega q/2},
\end{equation}
which means $\frac{1}{n}\sum_{i=1}^{n}X_{(i-1)\Delta}\xrightarrow{\mathcal{L}^{q}}\frac{1}{{n\Delta}}\int_{0}^{n\Delta}X_{t}dt$ as $n\to\infty$. Furthermore, if $\sum_{n\geq 1}(n\Delta)^{-\omega q_{0}/2}<\infty$ for some $q_{0}>0$, and if the chosen $q$ is sufficiently large (e.g., $q>\frac{2}{\omega}$), then $\frac{1}{n}\sum_{i=1}^{n}X_{(i-1)\Delta}\xrightarrow{a.s.}\frac{1}{n\Delta}\int_{0}^{n\Delta}X_{t}dt$ as $n\to\infty$.
\end{lemma}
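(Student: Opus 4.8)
The plan is to control the discretization error directly in $\mathcal{L}^{q}$ and then upgrade to almost sure convergence by a Borel--Cantelli argument; this is exactly the scheme of Theorem~\ref{theorem-Riemann} specialized to $Z_t=X_t$, $g(x)=x$ and H\"older exponent $\delta=\tfrac12$, with the abstract rate $O(\Delta^{\delta})$ made explicit through the sampling scheme. Writing $T=n\Delta$ and regrouping the integral over the sampling grid, set
\[
\mathcal{R}_n:=\frac{1}{T}\int_0^T X_t\,dt-\frac{1}{T}\sum_{i=1}^n X_{(i-1)\Delta}\Delta=\frac{1}{T}\sum_{i=1}^n\int_{(i-1)\Delta}^{i\Delta}\big(X_t-X_{(i-1)\Delta}\big)\,dt.
\]
Since $\frac1n\sum_{i=1}^n X_{(i-1)\Delta}=\frac1T\sum_{i=1}^n X_{(i-1)\Delta}\Delta$, the two averages in the statement differ exactly by $\mathcal{R}_n$, and the left-hand side of the assertion is $\mathbb{E}[|\mathcal{R}_n|^q]$.

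First I would treat $q\ge 1$. Applying Minkowski's inequality (to the finite sum and to the inner integral) and then the $1/2$-H\"older continuity of $X$ in $\mathcal{L}^{q}$ from Lemma~\ref{ergodicity-of-CIR}'s (\romannumeral6), for $t\in[(i-1)\Delta,i\Delta]$ one has $\|X_t-X_{(i-1)\Delta}\|_{\mathcal{L}^q}\le C\,|t-(i-1)\Delta|^{1/2}\le C\Delta^{1/2}$, whence
\[
\|\mathcal{R}_n\|_{\mathcal{L}^q}\le\frac{1}{T}\sum_{i=1}^n\int_{(i-1)\Delta}^{i\Delta}C\Delta^{1/2}\,dt=C\Delta^{1/2}.
\]
Under the scheme $\Delta=T^{-\omega}$ with $T=n\Delta$ one has $\Delta^{1/2}=(n\Delta)^{-\omega/2}$, so raising to the $q$-th power yields $\mathbb{E}[|\mathcal{R}_n|^q]\lesssim(n\Delta)^{-\omega q/2}$; the case $0<q<1$ reduces to the $q=1$ bound by monotonicity of $\mathcal{L}^q$-norms (Lyapunov). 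As the right-hand side tends to $0$, this is precisely $\frac1n\sum_{i=1}^n X_{(i-1)\Delta}\xrightarrow{\mathcal{L}^q}\frac{1}{n\Delta}\int_0^{n\Delta}X_t\,dt$.

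For the almost sure statement I would use Markov's inequality, $\mathbb{P}(|\mathcal{R}_n|>\varepsilon)\le\varepsilon^{-q}\mathbb{E}[|\mathcal{R}_n|^q]\lesssim\varepsilon^{-q}(n\Delta)^{-\omega q/2}$ for every $\varepsilon>0$. If the summability $\sum_{n}(n\Delta)^{-\omega q_0/2}<\infty$ holds for some $q_0$, then for the chosen $q\ge q_0$ (e.g.\ $q$ above the threshold $2/\omega$ flagged in the statement, using $(n\Delta)^{-\omega q/2}\le(n\Delta)^{-\omega q_0/2}$ once $n\Delta\ge1$) the series $\sum_n\mathbb{P}(|\mathcal{R}_n|>\varepsilon)$ converges; Borel--Cantelli together with a countable intersection over $\varepsilon=1/m$ then gives $\mathcal{R}_n\to0$ almost surely, hence the claimed a.s.\ convergence.

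Two points demand care. First, to obtain the stated rate (rather than mere convergence) the constant $C$ in the increment bound must be uniform over the location of each subinterval and, crucially, uniform as $T=n\Delta\to\infty$; this is not automatic from the finite-horizon constant $C(q,T,\ldots)$ of Lemma~\ref{ergodicity-of-CIR}'s (\romannumeral6), but is supplied by the uniform-in-time moment bounds available from positive Harris recurrence (the increment over a window of length $\le\Delta$ depends, via BDG applied to the CIR SDE, only on $\sup_{u\ge0}\mathbb{E}[X_u^{p}]<\infty$). Second, and this is the genuine obstacle, the almost sure upgrade cannot be routed through pathwise H\"older regularity: as the remark following Theorem~\ref{k-rate} records, the borderline exponent $q/2$ in Kolmogorov--Chentsov fails to deliver a.s.\ $1/2$-H\"older paths. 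One is therefore forced to take $q$ large enough that the summable-tail (Borel--Cantelli) criterion applies, which is exactly the role played by the hypothesis $\sum_n(n\Delta)^{-\omega q_0/2}<\infty$.
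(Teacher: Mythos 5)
Your proposal is correct and follows essentially the same route as the paper: the same decomposition of $\mathcal{R}_n$ over the sampling grid, the same $\mathcal{L}^{q}$ $1/2$-H\"{o}lder increment bound from Lemma~\ref{ergodicity-of-CIR}'s (\romannumeral6) combined with $\Delta^{1/2}=(n\Delta)^{-\omega/2}$, the same Lyapunov reduction for $0<q<1$, and the same Markov--Borel--Cantelli upgrade to almost sure convergence. The only variations are cosmetic or additive: you use the integral Minkowski inequality where the paper uses H\"{o}lder's inequality for $q>1$ (both give the identical rate), and your observation that the increment constant must be uniform in $T=n\Delta$ (via BDG and uniform-in-time moment bounds, rather than the finite-horizon constant $C(q,T,\ldots)$) is a legitimate refinement that the paper's proof leaves implicit.
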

\begin{proof}
\noindent\textit{Case $q=1$.} Using Lemma~\ref{ergodicity-of-CIR}'s (\romannumeral6) yields
\begin{align}\nonumber
	&\mathbb{E}\Big[\big\lvert\frac{1}{n\Delta}\int_{0}^{n\Delta}X_{t}dt-\frac{1}{n\Delta}\sum_{i=1}^{n}X_{(i-1)\Delta}\Delta\big\lvert\Big]=\frac{1}{n\Delta}\mathbb{E}\Big[\int_{0}^{n\Delta}\sum_{i=1}^{n}(X_{t}-X_{(i-1)\Delta})\mathds{1}_{\{(i-1)\Delta<t\leq i\Delta\}}dt\Big]\\
    \leq&\frac{1}{n\Delta}\sum_{i=1}^{n}\int_{(i-1)\Delta}^{i\Delta}\mathbb{E}\big[\lvert X_{t}-X_{(i-1)\Delta}\lvert\big]dt\leq(n\Delta)^{-1}(n\Delta)C (n\Delta)^{-\omega/2}=C(n\Delta)^{-\omega/2}.\tag{$\ast$}
\end{align}
\noindent\textit{Case $q>1$.} Using Lemma~\ref{ergodicity-of-CIR}'s (\romannumeral6) together with H\"{o}lder's inequality yields
\begin{align}\nonumber
	&\mathbb{E}\Big[\big\lvert\frac{1}{n\Delta}\int_{0}^{n\Delta}X_{t}dt-\frac{1}{n\Delta}\sum_{i=1}^{n}X_{(i-1)\Delta}\Delta\big\lvert^{q}\Big]\\
    =&\frac{1}{(n\Delta)^{q}}\mathbb{E}\Big[\big\lvert\int_{0}^{n\Delta}\sum_{i=1}^{n}(X_{t}-X_{(i-1)\Delta})\mathds{1}_{\{(i-1)\Delta<t\leq i\Delta\}}dt\big\lvert^{q}\Big]\nonumber\\
    \leq&\frac{1}{(n\Delta)^{q}}\mathbb{E}\Big[\int_{0}^{n\Delta}\big\lvert\sum_{i=1}^{n}(X_{t}-X_{(i-1)\Delta})\mathds{1}_{\{
    (i-1)\Delta<t\leq i\Delta\}}\big\lvert^{q}dt\Big]\Big(\int_{0}^{n\Delta}dt\Big)^{q-1}\nonumber\\
    \leq&\frac{1}{(n\Delta)^{q}}(n\Delta)^{q-1}\sum_{i=1}^{n}\int_{(i-1)\Delta}^{i\Delta}\mathbb{E}\big[\lvert X_{t}-X_{(i-1)\Delta}\lvert^{q}\big]dt\leq (n\Delta)^{-1}C(n\Delta)^{-\omega q/2}(n\Delta)=C(n\Delta)^{-\omega q/2}.\nonumber
\end{align}
\noindent \textit{Case $0<q<1$.} By the concavity of $x\mapsto x^{q}$ (Lyapunov's monotonicity of $\mathcal{L}_{q}$ norms),
\begin{equation}\nonumber
    \mathbb{E}\big[\lvert Z_{n}\lvert^{q}\big]\leq \big(\mathbb{E}\big[\lvert Z_{n}\lvert\big]\big)^{q},\quad Z_{n}:=\frac{1}{n\Delta}\int_{0}^{n\Delta}X_{t}dt-\frac{1}{n}\sum_{i=1}^{n}X_{(i-1)\Delta}.
\end{equation}
Using the $q=1$ bound ($\ast$), $\mathbb{E}\big[\lvert Z_{n}\lvert\big]\lesssim (n\Delta)^{-\omega/2}$, hence $\mathbb{E}\big[\lvert Z_{n}\lvert^{q}\big]\lesssim (n\Delta)^{-\omega q/2}$.\\[0.5\baselineskip]
\noindent Fix $\varepsilon>0$ and define $A_{n}(\varepsilon):=\big\{\lvert Z_{n}\lvert \geq\varepsilon\big\}$. By Markov's inequality, $\mathbb{P}\big(A_{n}(\varepsilon)\big)\leq \varepsilon^{-q}\mathbb{E}\big[\lvert Z_{n}\lvert^{q}\big]\leq C\varepsilon^{-q}(n\Delta)^{-\omega q/2}$. Choose $q>0$ sufficiently large that $\sum_{n=1}^{\infty}(n\Delta)^{-\omega q/2}<\infty$, then it holds that $\sum_{n\geq 1}\mathbb{P}\big(A_{n}(\varepsilon)\big)<\infty$, and by the Borel--Cantelli lemma, we obtain $Z_{n}\to 0~a.s.\text{ as }n\to\infty$, which is the desired almost-sure convergence of the Riemann sum to the time average.
\end{proof}

\begin{lemma}\label{lem:ergodic-sum-second-order}[$\int_{0}^{n\Delta}X_{t}^{2}dt$: $\mathcal{L}^{q}$ and almost-sure Discretization Error for the Riemann sum]\\
Fix $\omega>0$ ($\Delta\to0$ and $n\Delta\to\infty$). Then for any $q>0$:
\begin{equation}\nonumber
	\mathbb{E}\Big[\big\lvert\frac{1}{{n\Delta}}\int_{0}^{n\Delta}X_{t}^{2}dt-\frac{1}{n\Delta}\sum_{i=1}^{n}X_{(i-1)\Delta}^{2}\Delta\big\lvert^{q}\Big]\lesssim (n\Delta)^{-\omega q/2},
\end{equation}
which means $\frac{1}{n}\sum_{i=1}^{n}X_{(i-1)\Delta}^{2}\xrightarrow{\mathcal{L}^{q}}\frac{1}{{n\Delta}}\int_{0}^{n\Delta}X_{t}^{2}dt$ as $n\to\infty$. Furthermore, if $\sum_{n\geq 1}(n\Delta)^{-\omega q_{0}/2}<\infty$ for some $q_{0}>0$, and if the chosen $q$ is sufficiently large (e.g., $q>\frac{2}{\omega}$), then $\frac{1}{n}\sum_{i=1}^{n}X_{(i-1)\Delta}^{2}\xrightarrow{a.s.}\frac{1}{n\Delta}\int_{0}^{n\Delta}X_{t}^{2}dt$ as $n\to\infty$.
\end{lemma}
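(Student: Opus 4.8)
The plan is to follow verbatim the structure of the proof of Lemma~\ref{lem:ergodic-sum-first-order}, replacing the integrand $X_t$ by $X_t^2$ throughout. The only structural input that changes is the modulus-of-continuity estimate: where the first-order argument invokes the $\mathcal{L}^q$ $1/2$-Hölder bound of Lemma~\ref{ergodicity-of-CIR}'s (\romannumeral6) for $X_t$, here I would invoke Proposition~\ref{lem:r-sq-increment}, which supplies exactly the analogous control $\mathbb{E}[\lvert X_t^2 - X_s^2\rvert^{q}] \leq C\lvert t-s\rvert^{q/2}$ for all $q>0$. Writing the discretization error as
\begin{equation}\nonumber
	Z_n := \frac{1}{n\Delta}\int_0^{n\Delta} X_t^2\,dt - \frac{1}{n\Delta}\sum_{i=1}^n X_{(i-1)\Delta}^2\,\Delta = \frac{1}{n\Delta}\int_0^{n\Delta}\sum_{i=1}^n\big(X_t^2 - X_{(i-1)\Delta}^2\big)\mathds{1}_{\{(i-1)\Delta<t\leq i\Delta\}}\,dt,
\end{equation}
and recalling that the sampling scheme gives $\Delta = T^{-\omega} = (n\Delta)^{-\omega}$, so that $\Delta^{1/2} = (n\Delta)^{-\omega/2}$, sets up the same three-case moment computation.

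For $q=1$ I would bound $\mathbb{E}[\lvert Z_n\rvert]$ by interchanging expectation and integral (Tonelli) and applying Proposition~\ref{lem:r-sq-increment} with exponent $1$ on each subinterval, giving $\mathbb{E}[\lvert X_t^2 - X_{(i-1)\Delta}^2\rvert] \leq C\Delta^{1/2}$ uniformly in $i$ and $t\in((i-1)\Delta, i\Delta]$; summing over the $n$ cells of total length $n\Delta$ collapses the prefactor and yields $\mathbb{E}[\lvert Z_n\rvert]\lesssim (n\Delta)^{-\omega/2}$. For $q>1$ I would apply Hölder's inequality in the time integral exactly as in Lemma~\ref{lem:ergodic-sum-first-order} to pull out a factor $(n\Delta)^{q-1}$, then use Proposition~\ref{lem:r-sq-increment} with exponent $q$ on each cell to obtain $\mathbb{E}[\lvert Z_n\rvert^{q}]\lesssim (n\Delta)^{-\omega q/2}$. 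For $0<q<1$ I would invoke Lyapunov monotonicity of $\mathcal{L}^q$-norms, $\mathbb{E}[\lvert Z_n\rvert^{q}]\leq (\mathbb{E}[\lvert Z_n\rvert])^{q}$, and feed in the $q=1$ bound. The three cases together give the asserted rate for all $q>0$, which in particular delivers the stated $\mathcal{L}^q$-convergence of the Riemann sum to the time average.

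For the almost-sure refinement I would fix $\varepsilon>0$, set $A_n(\varepsilon):=\{\lvert Z_n\rvert\geq\varepsilon\}$, and apply Markov's inequality at the chosen order $q$ to get $\mathbb{P}(A_n(\varepsilon))\leq \varepsilon^{-q}\mathbb{E}[\lvert Z_n\rvert^{q}]\leq C\varepsilon^{-q}(n\Delta)^{-\omega q/2}$; choosing $q>2/\omega$ makes $\sum_n (n\Delta)^{-\omega q/2}$ summable, so Borel--Cantelli yields $Z_n\to 0$ almost surely. There is essentially no genuine obstacle here: the entire difficulty of the first-order lemma---obtaining a uniform $1/2$-Hölder modulus for the integrand in $\mathcal{L}^q$---has already been discharged for the squared process in Proposition~\ref{lem:r-sq-increment}, itself a Cauchy--Schwarz combination of the uniform moment bound (Lemma~\ref{ergodicity-of-CIR}'s (\romannumeral2)) and the $1/2$-Hölder bound (Lemma~\ref{ergodicity-of-CIR}'s (\romannumeral6)). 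The only point requiring minor care is bookkeeping of the scaling identity $\Delta = (n\Delta)^{-\omega}$, so that the per-cell factor $\Delta^{1/2}$ is correctly reported as $(n\Delta)^{-\omega/2}$; once that is fixed, the estimates are purely mechanical transcriptions of the first-order argument.
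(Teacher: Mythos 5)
Your proposal is correct and takes essentially the same route as the paper, whose proof of this lemma consists precisely of repeating the argument of Lemma~\ref{lem:ergodic-sum-first-order} with $X_{t}$ replaced by $X_{t}^{2}$ and the H\"{o}lder bound of Lemma~\ref{ergodicity-of-CIR}'s (vi) replaced by Proposition~\ref{lem:r-sq-increment} in the cases $q=1$ and $q>1$. Your three-case moment computation (Tonelli for $q=1$, H\"{o}lder for $q>1$, Lyapunov monotonicity for $0<q<1$), the scaling identity $\Delta=(n\Delta)^{-\omega}$, and the Markov--Borel--Cantelli step for the almost-sure refinement are a faithful transcription of that first-order argument.
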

\begin{proof}
    The proof is nearly identical to that of Lemma~\ref{lem:ergodic-sum-first-order}, with $X_{t}$ replaced by $X_{t}^{2}$ and Lemma~\ref{ergodicity-of-CIR}'s (\romannumeral6) replaced by Proposition~\ref{lem:r-sq-increment} in the cases $q=1$ and $q>1$.
\end{proof}

\begin{proposition}\label{prop:ergodic-sum-third-order}[$\int_{0}^{n\Delta}X_{t}^{3}dt$: $\mathcal{L}^{q}$ and almost-sure Discretization Error for the Riemann sum]\\
Fix $\omega>0$ ($\Delta\to0$ and $n\Delta\to\infty$). Then for any $q>0$:
\begin{equation}\nonumber
	\mathbb{E}\Big[\big\lvert\frac{1}{{n\Delta}}\int_{0}^{n\Delta}X_{t}^{3}dt-\frac{1}{n\Delta}\sum_{i=1}^{n}X_{(i-1)\Delta}^{3}\Delta\big\lvert^{q}\Big]\lesssim (n\Delta)^{-\omega q/2},
\end{equation}
which means $\frac{1}{n}\sum_{i=1}^{n}X_{(i-1)\Delta}^{3}\xrightarrow{\mathcal{L}^{q}}\frac{1}{{n\Delta}}\int_{0}^{n\Delta}X_{t}^{3}dt$ as $n\to\infty$. Furthermore, if $\sum_{n\geq 1}(n\Delta)^{-\omega q_{0}/2}<\infty$ for some $q_{0}>0$, and if the chosen $q$ is sufficiently large (e.g., $q>\frac{2}{\omega}$), then $\frac{1}{n}\sum_{i=1}^{n}X_{(i-1)\Delta}^{3}\xrightarrow{a.s.}\frac{1}{n\Delta}\int_{0}^{n\Delta}X_{t}^{3}dt$ as $n\to\infty$.
\end{proposition}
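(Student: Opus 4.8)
The plan is to follow the template of Lemma~\ref{lem:ergodic-sum-first-order} verbatim, replacing $X_{t}$ by $X_{t}^{3}$ throughout; the only genuinely new ingredient is a cubic analogue of the increment estimate in Proposition~\ref{lem:r-sq-increment}, namely that for every $q>0$ there is $C=C(q,T,\alpha,\beta,\gamma,X_{0})<\infty$ with
\[
    \mathbb{E}\big[\lvert X_{t}^{3}-X_{s}^{3}\rvert^{q}\big]\leq C\lvert t-s\rvert^{q/2},\qquad s,t\in[0,T].
\]
Once this bound is in hand, the three regimes $q=1$, $q>1$ and $0<q<1$ go through exactly as in that lemma, and the almost-sure refinement follows by the same Borel--Cantelli argument. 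This is precisely how Lemma~\ref{lem:ergodic-sum-second-order} was obtained from Lemma~\ref{lem:ergodic-sum-first-order}, with the quadratic increment bound now upgraded to a cubic one.

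To establish the cubic increment estimate I would factor $X_{t}^{3}-X_{s}^{3}=(X_{t}-X_{s})(X_{t}^{2}+X_{t}X_{s}+X_{s}^{2})$ and split the $q$-th moment by Cauchy--Schwarz:
\[
    \mathbb{E}\big[\lvert X_{t}^{3}-X_{s}^{3}\rvert^{q}\big]\leq\Big(\mathbb{E}\big[\lvert X_{t}-X_{s}\rvert^{2q}\big]\Big)^{1/2}\Big(\mathbb{E}\big[(X_{t}^{2}+X_{t}X_{s}+X_{s}^{2})^{2q}\big]\Big)^{1/2}.
\]
The first factor is controlled by the $1/2$-H\"{o}lder continuity in $\mathcal{L}^{2q}$ of Lemma~\ref{ergodicity-of-CIR}'s (\romannumeral6), giving $\big(\mathbb{E}[\lvert X_{t}-X_{s}\rvert^{2q}]\big)^{1/2}\leq C\lvert t-s\rvert^{q/2}$. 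The second factor is bounded by a constant uniform in $s,t\in[0,T]$: expanding the power, using $(a+b+c)^{2q}\lesssim a^{2q}+b^{2q}+c^{2q}$ and Cauchy--Schwarz reduces it to the uniform positive-moment bounds $\sup_{0\leq t\leq T}\mathbb{E}[X_{t}^{p}]<\infty$ of Lemma~\ref{ergodicity-of-CIR}'s (\romannumeral2), which hold for every $p>0$. Since only positive moments (up to order $4q$ after expansion) are needed, the failure of Feller's condition---which would otherwise jeopardize the negative moments of $X_{t}$---is harmless here. Combining the two factors yields the claimed bound.

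With the increment estimate secured, the rest mirrors Lemma~\ref{lem:ergodic-sum-first-order}. Writing $Z_{n}:=\frac{1}{n\Delta}\int_{0}^{n\Delta}X_{t}^{3}dt-\frac{1}{n}\sum_{i=1}^{n}X_{(i-1)\Delta}^{3}$ as a sum over cells $((i-1)\Delta,i\Delta]$ of integrated increments $X_{t}^{3}-X_{(i-1)\Delta}^{3}$, and recalling $\Delta^{1/2}=(n\Delta)^{-\omega/2}$: for $q=1$ one bounds directly by Tonelli; for $q>1$ one inserts H\"{o}lder's inequality in the time variable to pull the $q$-th power inside the integral; and for $0<q<1$ one uses Lyapunov monotonicity $\mathbb{E}[\lvert Z_{n}\rvert^{q}]\leq(\mathbb{E}[\lvert Z_{n}\rvert])^{q}$ together with the $q=1$ bound. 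Each case yields $\mathbb{E}[\lvert Z_{n}\rvert^{q}]\lesssim(n\Delta)^{-\omega q/2}$, hence $\mathcal{L}^{q}$-convergence. For the almost-sure statement, Markov's inequality gives $\mathbb{P}(\lvert Z_{n}\rvert\geq\varepsilon)\lesssim\varepsilon^{-q}(n\Delta)^{-\omega q/2}$; choosing $q$ large enough (e.g.\ $q>2/\omega$) that $\sum_{n}(n\Delta)^{-\omega q/2}<\infty$ and applying Borel--Cantelli gives $Z_{n}\to0$ almost surely.

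The step that will require the most care is the uniform boundedness of the second Cauchy--Schwarz factor $\mathbb{E}[(X_{t}^{2}+X_{t}X_{s}+X_{s}^{2})^{2q}]$. This is not hard, but it is precisely where one must confirm that all the orders produced by the expansion (up to $4q$) are covered by the positive-moment bound of Lemma~\ref{ergodicity-of-CIR}'s (\romannumeral2); it is also the point at which the analogous argument would break down if negative powers of $X_{t}$ appeared. Everything downstream is a mechanical repetition of the first-order lemma.
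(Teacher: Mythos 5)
Your proposal is correct and follows essentially the same route as the paper: the identical factorization $X_{t}^{3}-X_{s}^{3}=(X_{t}-X_{s})(X_{t}^{2}+X_{t}X_{s}+X_{s}^{2})$ with a Cauchy--Schwarz split, the $\mathcal{L}^{2q}$ H\"{o}lder bound from Lemma~\ref{ergodicity-of-CIR}'s (\romannumeral6) for the increment factor, uniform positive-moment bounds for the polynomial factor, and then the verbatim repetition of the $q=1$, $q>1$, $0<q<1$ regimes and the Borel--Cantelli refinement from Lemmas~\ref{lem:ergodic-sum-first-order}--\ref{lem:ergodic-sum-second-order}. If anything, your citation of Lemma~\ref{ergodicity-of-CIR}'s (\romannumeral2) for the uniform moment bound on the second factor is the more precise reference than the one given in the paper's proof.
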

\begin{proof}
Using analogous arguments for $q=1$, $q>1$ and $0<q<1$ as in the above-mentioned two Lemmas via Lemma~\ref{lem:r-sq-increment}, together with the result that: There exists $C_{q}^{*}$ such that 
\begin{equation}\nonumber
    \mathbb{E}\big[\lvert X_{t}^{3}-X_{s}^{3}\lvert\big]\leq C_{q}^{*}(t-s)^{q/2}\text{ for every }q>0,~0\leq s<t \leq T\text{ with }0<t-s<1,
\end{equation}
followed by the Cauchy--Schwarz inequality: $\mathbb{E}\big[\lvert X_{t}^{3}-X_{s}^{3}\lvert^{q}\big]=\mathbb{E}\big[\lvert X_{t}-X_{s}\lvert^{q}\big(X_{t}^{2}+X_{t}X_{s}+X_{s}^{2}\big)^{q}\big]$ $\leq\sqrt{\mathbb{E}\big[(X_{t}^{2}+X_{t}X_{s}+X_{s}^{2})^{2q}\big]\mathbb{E}\big[\lvert X_{t}-X_{s}\lvert^{2q}\big]}$, in which the boundedness inside the root is ensured by Lemma~\ref{lem:r-sq-increment} and Lemma~\ref{ergodicity-of-CIR}'s (\romannumeral6).
\end{proof}

\begin{lemma}\label{discrete-continuous-error}[Asymptotic Equivalence of Left Riemann Sums and Integrated Processes]\\
\noindent Assume $\omega>1$ ($\Delta\to0$, $n\Delta\to\infty$ and $n\Delta^{2}\to\infty$), then the following results hold:
\begin{align}\nonumber
    (\romannumeral1) \quad (n\Delta)^{-\frac{1}{2}}\sum_{i=1}^{n}X_{(i-1)\Delta}\Delta&=(n\Delta)^{-\frac{1}{2}}\int_{0}^{n\Delta}X_{t}dt+\xi_{1}(n),\\
    (\romannumeral2) \quad(n\Delta)^{-\frac{1}{2}}\sum_{i=1}^{n}X_{(i-1)\Delta}^{2}\Delta&=(n\Delta)^{-\frac{1}{2}}\int_{0}^{n\Delta}X_{t}^{2}dt+\xi_{2}(n),\nonumber\\
    (\romannumeral3) \quad(n\Delta)^{-\frac{3}{2}}\Big(\sum_{i=1}^{n}X_{(i-1)\Delta}\Delta\Big)^{2}&=(n\Delta)^{-\frac{3}{2}}\Big(\int_{0}^{n\Delta}X_{t}dt\Big)^{2}+\xi_{3}(n),\nonumber
\end{align}
with $\xi_{l}(n)\xrightarrow{p}0$, as $n\to\infty$, $l=1,2,3$.
\end{lemma}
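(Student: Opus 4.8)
The plan is to read each $\xi_{l}(n)$ as a rescaled discretization error and to reduce everything to the $\mathcal{L}^{q}$-bounds already established in Lemmas~\ref{lem:ergodic-sum-first-order} and~\ref{lem:ergodic-sum-second-order}, so that no new probabilistic estimate is required. Write $S_{n}^{(1)}:=\sum_{i=1}^{n}X_{(i-1)\Delta}\Delta$, $I_{n}^{(1)}:=\int_{0}^{n\Delta}X_{t}\,dt$, and analogously $S_{n}^{(2)},I_{n}^{(2)}$ for the squared process. By definition $\xi_{1}(n)=(n\Delta)^{-1/2}\big(S_{n}^{(1)}-I_{n}^{(1)}\big)=(n\Delta)^{1/2}D_{n}^{(1)}$, where $D_{n}^{(1)}:=\frac{1}{n\Delta}S_{n}^{(1)}-\frac{1}{n\Delta}I_{n}^{(1)}$ is exactly the normalized discretization error controlled by Lemma~\ref{lem:ergodic-sum-first-order}. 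First I would take $q=1$ there, obtaining $\mathbb{E}[|D_{n}^{(1)}|]\lesssim(n\Delta)^{-\omega/2}$, so that $\mathbb{E}[|\xi_{1}(n)|]\lesssim(n\Delta)^{(1-\omega)/2}\to0$ since $\omega>1$ and $n\Delta\to\infty$; hence $\xi_{1}(n)\to0$ in $\mathcal{L}^{1}$ and therefore in probability. The identical computation with Lemma~\ref{lem:ergodic-sum-second-order} in place of Lemma~\ref{lem:ergodic-sum-first-order} handles $\xi_{2}(n)=(n\Delta)^{1/2}D_{n}^{(2)}$.

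For part (iii) I would avoid introducing a separate ``squared-sum'' discretization estimate and instead factor the difference of squares. Since $(S_{n}^{(1)})^{2}-(I_{n}^{(1)})^{2}=(S_{n}^{(1)}-I_{n}^{(1)})(S_{n}^{(1)}+I_{n}^{(1)})$ and $S_{n}^{(1)}-I_{n}^{(1)}=(n\Delta)^{1/2}\xi_{1}(n)$, a direct rearrangement gives
\[
\xi_{3}(n)=(n\Delta)^{-3/2}\big(S_{n}^{(1)}-I_{n}^{(1)}\big)\big(S_{n}^{(1)}+I_{n}^{(1)}\big)=\xi_{1}(n)\Big(\tfrac{1}{n\Delta}S_{n}^{(1)}+\tfrac{1}{n\Delta}I_{n}^{(1)}\Big).
\]
By Lemma~\ref{ergod-law-first-moment}, $\frac{1}{n\Delta}I_{n}^{(1)}\xrightarrow{p}\alpha/\beta=m_{1}$, and since $\frac{1}{n\Delta}S_{n}^{(1)}=\frac{1}{n\Delta}I_{n}^{(1)}+D_{n}^{(1)}$ with $D_{n}^{(1)}\xrightarrow{p}0$, the bracket converges in probability to $2m_{1}$ and is in particular $O_{p}(1)$. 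Combined with $\xi_{1}(n)\xrightarrow{p}0$ from the first paragraph, a Slutsky argument (a product of an $o_{p}(1)$ term and an $O_{p}(1)$ term) yields $\xi_{3}(n)\xrightarrow{p}0$, which completes the proof.

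The only genuinely delicate point is the competition between the amplification factor $(n\Delta)^{1/2}$ coming from the $\sqrt{n\Delta}$-normalization and the decay $(n\Delta)^{-\omega/2}$ of the raw discretization error: their product $(n\Delta)^{(1-\omega)/2}$ vanishes \emph{precisely} because $\omega>1$, i.e.\ because we sit in the ultra-high-frequency regime $n\Delta^{2}\to0$. For $0<\omega<1$ the exponent would be positive and the statement would fail, so the hypothesis $\omega>1$ enters in an essential way and is the crux of the argument. Beyond correctly reading off these scalings, I expect no further obstacle: parts (i)--(ii) fall out immediately from the stated $\mathcal{L}^{q}$ discretization bounds, and part (iii) is reduced to the already-proved part (i) together with the first-moment ergodic limit, so the whole lemma is essentially a bookkeeping consequence of the appendix estimates.
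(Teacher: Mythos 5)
Your proof is correct and follows essentially the same route as the paper: parts (i)--(ii) are rescaled applications of Lemmas~\ref{lem:ergodic-sum-first-order} and~\ref{lem:ergodic-sum-second-order} (the paper takes $q=2$ plus Chebyshev where you take $q=1$ plus Markov, an immaterial difference), and your difference-of-squares identity for (iii) is algebraically identical to the paper's expansion $\xi_{3}(n)=2\xi_{1}(n)\frac{1}{n\Delta}\int_{0}^{n\Delta}X_{t}dt+(n\Delta)^{-\frac{1}{2}}\xi_{1}^{2}(n)$. If anything, your justification of (iii) is slightly more careful than the paper's own text, which asserts $\frac{1}{n\Delta}\int_{0}^{n\Delta}X_{t}dt\xrightarrow{p}0$ (a slip, since by Lemma~\ref{ergod-law-first-moment} it converges to $\alpha/\beta$), whereas you correctly use only that this time average is $O_{p}(1)$.
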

\begin{proof}
(\romannumeral1) and (\romannumeral2) follow from Lemmas~\ref{lem:ergodic-sum-first-order} and~\ref{lem:ergodic-sum-second-order}. (\romannumeral3) is a natural consequence of (\romannumeral1).\\
For (\romannumeral1), as there exists some $C$ such that
\begin{align}\nonumber
	&\mathbb{E}\Big[\big\lvert\frac{1}{{n\Delta}}\int_{0}^{n\Delta}X_{t}dt-\frac{1}{n\Delta}\sum_{i=1}^{n}X_{(i-1)\Delta}\Delta\big\lvert^{q}\Big]\leq C(n\Delta)^{-\omega q/2},\\
	\Longrightarrow\quad &\mathbb{E}\big[\lvert\xi_{1}(n) \lvert^{q}\big]=(n\Delta)^{q/2}\mathbb{E}\Big[\big\lvert\frac{1}{{n\Delta}}\int_{0}^{n\Delta}X_{t}dt-\frac{1}{n\Delta}\sum_{i=1}^{n}X_{(i-1)\Delta}\Delta\big\lvert^{q}\Big]\leq C(n\Delta)^{(1-\omega)q/2}.\nonumber
\end{align}
When $q=2$, this becomes $\mathbb{E}\big[\lvert \xi_{1}(n)\lvert^{2}\big]\leq C(n\Delta)^{1-\omega}$. By Chebyshev's inequality $\xi_{1}(n)=O_{p}\big((n\Delta)^{(1-\omega)/2}\big)$. If $\omega>1$ (i.e., $n\Delta^{2}\to0$) is further guaranteed, $\xi_{1}(n)\xrightarrow{p} 0$ as $n\to\infty$.\\
For (\romannumeral2), the result follows analogously as in (\romannumeral1) since $\xi_{2}(n)=O_{p}\big((n\Delta)^{(1-\omega)/2}\big)$ as well.\\
For (\romannumeral3), we note that
\begin{equation}\nonumber
    (n\Delta)^{-\frac{3}{2}}\Big(\sum_{i=1}^{n}X_{(i-1)\Delta}\Delta\Big)^{2}=(n\Delta)^{-\frac{1}{2}}\Big((n\Delta)^{-\frac{1}{2}}\int_{0}^{n\Delta}X_{t}dt+\xi_{1}(n)\Big)^{2}=(n\Delta)^{-\frac{3}{2}}\Big(\int_{0}^{n\Delta}X_{t}dt\Big)^{2}+\xi_{3}(n),
\end{equation}
where $\xi_{3}(n)=2\xi_{1}(n)\frac{1}{n\Delta}\int_{0}^{n\Delta}X_{t}dt+(n\Delta)^{-\frac{1}{2}}\xi_{1}^{2}(n)\xrightarrow{p}0$, as $n\to\infty$. This follows from the facts that $\xi_{1}(n)\xrightarrow{p}0$ and $\frac{1}{n\Delta}\int_{0}^{n\Delta}X_{t}dt\xrightarrow{p}0$, as $n\to\infty$.
\end{proof}

\begin{lemma}\label{continuous-convergence-in-Lq}[$\mathcal{L}^{q}$-expansions for time-integrated functionals of $X_{t}$]\\
Let $q>0$. We have the following equations and convergences in $\mathcal{L}^{q}$ as $T\to\infty$ (thus also in probability):
\begin{align}\nonumber
    (\romannumeral1)~&T^{-\frac{1}{2}}\int_{0}^{T}X_{t}dt=\frac{\alpha}{\beta}T^{\frac{1}{2}}+\frac{\gamma}{\beta T^{\frac{1}{2}}}\int_{0}^{T}X_{t}^{\frac{1}{2}}dW_{t}+\rho_{1}(T),\\
    &\text{ with }\rho_{1}(T)=\frac{X_{0}-X_{T}}{\beta T^{\frac{1}{2}}}\xrightarrow{\mathcal{L}^{q}}0.\nonumber\\
    (\romannumeral2)~&T^{-\frac{1}{2}}\int_{0}^{T}X_{t}^{2}dt=\frac{\alpha(\alpha+\frac{\gamma^{2}}{2})}{\beta^{2}}T^{\frac{1}{2}}+\frac{\gamma(\alpha+\frac{\gamma^{2}}{2})}{\beta^{2}T^{\frac{1}{2}}}\int_{0}^{T}X_{t}^{\frac{1}{2}}dW_{t}+\frac{\gamma}{\beta T^{\frac{1}{2}}}\int_{0}^{T}X_{t}^{\frac{3}{2}}dW_{t}+\rho_{2}(T),\nonumber\\
    &\text{with }\rho_{2}(T)=\frac{X_{0}^{2}-X_{T}^{2}}{2\beta T^{\frac{1}{2}} }+\frac{(\alpha+\frac{\gamma^{2}}{2})\rho_{1}(T)}{\beta}\xrightarrow{\mathcal{L}^{q}}0.\nonumber\\
    (\romannumeral3)~&T^{-\frac{3}{2}}\Big(\int_{0}^{T}X_{t} dt\Big)^{2}=\frac{\alpha^{2}}{\beta^{2}}T^{\frac{1}{2}}+\frac{2\alpha\gamma}{\beta^{2}T^{\frac{1}{2}}}\int_{0}^{T}X_{t}^{\frac{1}{2}}dW_{t}+\rho_{3}(T),\nonumber\\
    &\text{with }\rho_{3}(T)=\frac{\gamma^{2}}{\beta^{2}T^{\frac{3}{2}}}\Big(\int_{0}^{T}X_{t}^{\frac{1}{2}}dW_{t}\Big)^{2}+\frac{\rho_{1}^{2}(T)}{T^{\frac{1}{2}}}+\frac{2\alpha \rho_{1}(T)}{\beta}
    +\frac{2\gamma \rho_{1}(T)}{\beta T}\int_{0}^{T}X_{t}^{\frac{1}{2}}dW_{t}\xrightarrow{\mathcal{L}^{q}}0.\nonumber
\end{align}
\end{lemma}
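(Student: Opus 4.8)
The plan is to treat the three identities as purely algebraic rearrangements of the integrated CIR dynamics and then to control the remainder terms $\rho_{\ell}(T)$ in $\mathcal{L}^{q}$ using the moment machinery already assembled. For (i), I would integrate the SDE (\ref{CIR-process}) over $[0,T]$, solve for $\int_{0}^{T}X_{t}\,dt$, and multiply through by $T^{-1/2}$; this is precisely the first-moment decomposition of Lemma~\ref{ergod-law-first-moment} rescaled by $T^{1/2}$, so the stated identity and the form $\rho_{1}(T)=(X_{0}-X_{T})/(\beta T^{1/2})$ are immediate. For (ii), I would apply It\^{o}'s formula to $x\mapsto x^{2}$ to obtain $d(X_{t}^{2})=\big((2\alpha+\gamma^{2})X_{t}-2\beta X_{t}^{2}\big)\,dt+2\gamma X_{t}^{3/2}\,dW_{t}$, integrate, solve for $\int_{0}^{T}X_{t}^{2}\,dt$, and then substitute the expression for $\int_{0}^{T}X_{t}\,dt$ from (i); collecting the $T^{1/2}$-order term, the $X_{t}^{1/2}$- and $X_{t}^{3/2}$-stochastic-integral terms, and the boundary terms reproduces the claimed decomposition with $\rho_{2}(T)$ exactly as stated (this is Lemma~\ref{ergod-law-second-moment} rescaled). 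For (iii), I would square the identity in (i), writing $\int_{0}^{T}X_{t}\,dt=A+B+C$ with $A=\tfrac{\alpha}{\beta}T$, $B=\tfrac{\gamma}{\beta}\int_{0}^{T}X_{t}^{1/2}\,dW_{t}$ and $C=(X_{0}-X_{T})/\beta$, multiply by $T^{-3/2}$, and match $A^{2}$ to the leading $\tfrac{\alpha^{2}}{\beta^{2}}T^{1/2}$ term and $2AB$ to the $\tfrac{2\alpha\gamma}{\beta^{2}T^{1/2}}\int_{0}^{T}X_{t}^{1/2}\,dW_{t}$ term, relegating $B^{2}$, $C^{2}$, $2AC$ and $2BC$ to the four summands of $\rho_{3}(T)$.

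For the $\mathcal{L}^{q}$ convergence of the remainders I would then proceed term by term. For $\rho_{1}(T)$, the uniform moment bound of Lemma~\ref{ergodicity-of-CIR}'s (\romannumeral2) gives $\mathbb{E}[|X_{0}-X_{T}|^{q}]\leq M<\infty$ uniformly in $T$, whence $\mathbb{E}[|\rho_{1}(T)|^{q}]\leq M/(\beta^{q}T^{q/2})\to 0$. The remainder $\rho_{2}(T)$ follows from the elementary inequality $|x+y|^{q}\leq C_{q}(|x|^{q}+|y|^{q})$: its first summand is $O(T^{-1/2})$ in $\mathcal{L}^{q}$ by the uniform $2q$-moment bound on $X_{T}$, and its second summand is a constant multiple of $\rho_{1}(T)$. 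For $\rho_{3}(T)$ the only terms requiring genuine estimates are the squared stochastic integral and the cross term: for $\tfrac{\gamma^{2}}{\beta^{2}T^{3/2}}\big(\int_{0}^{T}X_{t}^{1/2}\,dW_{t}\big)^{2}$ I would apply Lemma~\ref{lem:BDG-CIR} with exponent $2q$ and $q'=\tfrac{1}{2}$ to get $\mathbb{E}\big[|\int_{0}^{T}X_{t}^{1/2}\,dW_{t}|^{2q}\big]\lesssim T^{q}$, making the term $O(T^{-q/2})$ in $\mathcal{L}^{q}$; for $\tfrac{2\gamma}{\beta T}\rho_{1}(T)\int_{0}^{T}X_{t}^{1/2}\,dW_{t}$ I would use Cauchy--Schwarz to split it into $\big(\mathbb{E}[|\rho_{1}(T)|^{2q}]\big)^{1/2}$, which vanishes, times $\big(\mathbb{E}[|T^{-1}\int_{0}^{T}X_{t}^{1/2}\,dW_{t}|^{2q}]\big)^{1/2}\lesssim T^{-q/2}$ by Lemma~\ref{lem:BDG-CIR}. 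The remaining summands $\rho_{1}^{2}(T)/T^{1/2}$ and $\tfrac{2\alpha}{\beta}\rho_{1}(T)$ are controlled directly by the $\rho_{1}$ bound.

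The computation is essentially bookkeeping, so I do not expect a genuine obstacle, only the need for care in two places. The first is the algebraic expansion in (iii): one must verify that the six pieces of $(A+B+C)^{2}$, after multiplication by $T^{-3/2}$, recombine exactly into the two displayed leading terms plus the four summands of $\rho_{3}(T)$, and in particular that the scalings $T^{-3/2}$, $T^{-1/2}$ and $(\beta T)^{-1}$ attached to the respective pieces are reproduced faithfully. The second is ensuring that every invoked moment is finite uniformly in $T$; this is exactly where Lemma~\ref{ergodicity-of-CIR}'s (\romannumeral2) and the BDG-based Lemma~\ref{lem:BDG-CIR} do all the work, and it is worth checking that the requested orders ($q$ and $2q$) are covered, which they are since both lemmas hold for all positive exponents. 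Finally, since the claimed convergences are in $\mathcal{L}^{q}$, convergence in probability follows automatically, giving the parenthetical conclusion of the statement.
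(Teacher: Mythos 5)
Your proposal is correct and follows essentially the same route as the paper: the identities in (\romannumeral1) and (\romannumeral2) are exactly the decompositions from Lemmas~\ref{ergod-law-first-moment} and~\ref{ergod-law-second-moment} rescaled by $T^{1/2}$, and (\romannumeral3) is obtained by squaring (\romannumeral1), with the remainders controlled via the uniform moment bound of Lemma~\ref{ergodicity-of-CIR} and the BDG-based Lemma~\ref{lem:BDG-CIR}. Your explicit $(A+B+C)^{2}$ bookkeeping and the Cauchy--Schwarz treatment of the cross term simply spell out what the paper compresses into its one-line citation of those lemmas.
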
 
\begin{proof}
    Expressions of (\romannumeral1) and (\romannumeral2) are already given in the proofs of Lemmas~\ref{ergod-law-first-moment} and~\ref{ergod-law-second-moment}, where the respective $\mathcal{L}^{q}$-convergences are ensured by Lemma~\ref{ergodicity-of-CIR}'s (\romannumeral2). For (\romannumeral3), by squaring the expression of (\romannumeral1), we get the desired expression, while the convergence is derived from the bound Lemma~\ref{lem:BDG-CIR} together with the convergence of $\rho_{1}(T)$.
\end{proof}

\section{Proof of Selected Established Theorems}\label{ccc}
\noindent \textbf{Proof of Theorem~\ref{stable-slutsky}}
\begin{proof}  
To show (\romannumeral1): For any $\mathcal{F}$-measurable $\zeta'$, we have $(Z_{n},\zeta,\zeta')\xrightarrow{d}(Z,\zeta,\zeta')$. Since $\zeta_{n}\xrightarrow{p}\zeta$, we deduce that $(Z_{n},\zeta_{n},\zeta')\xrightarrow{d}(Z,\zeta,\zeta')$. This means $(Z_{n},\zeta_{n})\xrightarrow{d_{st}}(Z,\zeta)$.\\
\noindent To show (\romannumeral2): Similarly, for any $\mathcal{F}$-measurable random variable $\zeta$, we have $(Z_{n},\zeta)\xrightarrow{d}(Z,\zeta)$. Applying the classical continuous mapping theorem for convergence in law, we deduce that $(g(Z_{n}),\zeta)\xrightarrow{d}(g(Z),\zeta)$. Hence, we obtain $g(Z_{n})\xrightarrow{d_{st}}g(Z)$.\\
\noindent To show (\romannumeral3): By (\romannumeral1) we have the joint stable convergence in law $(Z_{n},\zeta_{n})\xrightarrow{d_{st}}(Z,\zeta)$. Then, applying (\romannumeral2) with 
continuous $h:\mathbb{R}^{2}\to\mathbb{R}$, we obtain $h(Z_{n},\zeta_{n})\xrightarrow{d_{st}}h(Z,\zeta)$.
Choosing $h$ as addition, multiplication, or division (with $\zeta\neq 0$) yields the usual Slutsky-type results. 
\end{proof}

\noindent \textbf{Proof of Theorem~\ref{preservation of convergence}}
\begin{proof}
For any bounded Borel function $g:\mathbb{R}\to\mathbb{R}$, let $K:=\lVert g\lVert_{\infty}<\infty$. For all $n\in\mathbb{N}$,
\begin{equation}\nonumber
	\lvert\mathsf{G}_{n}\lvert:=\big\lvert \Lambda_{T}\mathds{1}_{A}g(Y_{n})\big\lvert\leq K \Lambda_{T} \quad a.s.
\end{equation}
Because $\Lambda_{T}\in\mathcal{L}^{1}(\mathbb{P})$ (martingale with $\Lambda_{0}=1$ implies $\mathbb{E}^{\mathbb{P}}[\Lambda_{T}]=1$), we can take $\zeta:=K \Lambda_{T} \in \mathcal{L}^{1}(\mathbb{P})$. By Tool Lemma~$(\star)$ presented at the end of the proof, $\{\Lambda_{T}\mathds{1}_{A}g(Y_{n})\}_{n\geq 1}$ is uniformly integrable. As a result, for any $A\in\mathcal{F}_{T}$:
\begin{equation}\nonumber
    \mathbb{E}^{\mathbb{Q}}[\mathds{1}_{A}g(Y_{n})]=\mathbb{E}^{\mathbb{P}}[\Lambda_{T}\mathds{1}_{A}g(Y_{n})]\to\mathbb{E}^{\mathbb{P}}[\Lambda_{T}\mathds{1}_{A}g(Y)]=\mathbb{E}^{\mathbb{Q}}[\mathds{1}_{A}g(Y)],\quad \text{as }n\to\infty.
\end{equation}
by uniform integrability of $\mathsf{G}_{n}:=\Lambda_{T}\mathds{1}_{A}g(Y_{n})$. Since $\Lambda_{T}$ is $\mathcal{F}_{T}$-measurable and $\varphi \!\perp\!\!\!\perp\mathcal{F}_{T}$ (on a product extension) under $\mathbb{P}$, for any $A\in\mathcal{F}_{T}$ and $B=\{\varphi\in E\}$ for some Borel set $E\subset \mathbb{R}$:
\begin{equation}\nonumber
    \mathbb{Q}(A\cap B)=\mathbb{E}^{\mathbb{P}}\big[\Lambda_{T}\mathds{1}_{A}\mathds{1}_{B}\big]=\mathbb{E}^{\mathbb{P}}[\mathds{1}_{B}]\mathbb{E}^{\mathbb{P}}\big[\Lambda_{T}\mathds{1}_{A}]=\mathbb{P}(B)\mathbb{Q}(A),
\end{equation}
so $\varphi$ remains independent of $\mathcal{F}_{T}$ under $\mathbb{Q}$ with the same marginal law $\mathcal{N}(0,1)$.\\[0.5\baselineskip]
\noindent For the reverse implication from $\mathbb{Q}$ to $\mathbb{P}$: If $\Lambda_{T}:=\frac{d\mathbb{Q}}{d\mathbb{P}}\big\lvert_{\mathcal{F}_{T}}$ is the $\mathbb{P}$-derivative of $\mathbb{Q}$, then its reciprocal $\widetilde{\Lambda}_{T}:=\frac{d\mathbb{P}}{d\mathbb{Q}}\big\lvert_{\mathcal{F}_{T}}=\frac{1}{\Lambda_{T}}$ is the $\mathbb{Q}$-derivative of $\mathbb{P}$. By Bayes' formula, for $0\leq T'\leq T$ (thus $\mathcal{F}_{T'}\subseteq\mathcal{F}_{T}$),
\begin{equation}\nonumber
	\mathbb{E}^{\mathbb{Q}}\Big[\frac{1}{\Lambda_{T}}\Big\lvert\mathcal{F}_{T'}\Big]=\frac{\mathbb{E}^{\mathbb{P}}\big[\Lambda_{T}\cdot\frac{1}{\Lambda_{T}}\big\lvert\mathcal{F}_{T'}\big]}{\mathbb{E}^{\mathbb{P}}\big[\Lambda_{T}\lvert\mathcal{F}_{T'}\big]}=\frac{1}{\Lambda_{T'}}=\widetilde{\Lambda}_{T'}.
\end{equation}
Hence $\widetilde{\Lambda}_{T'}$ for $T'\leq T$ is a $\mathbb{Q}$-martingale, and the change-of-measure identity $\mathbb{E}^{\mathbb{P}}[\zeta]=\mathbb{E}^{\mathbb{Q}}[\widetilde{\Lambda}_{T}\zeta]$ holds for all integrable $\zeta\in\mathcal{L}^{1}$ under $\mathbb{Q}$ with respect to $\mathcal{F}_{T'}$. This allows one to repeat the proof verbatim with $\widetilde{\Lambda}_{T}$ in place of $\Lambda_{T}$ and $\mathbb{Q}$ in place of $\mathbb{P}$, establishing invariance of stable convergence in law under equivalent measures in the reverse direction.\\[0.5\baselineskip]
\noindent \textbf{Tool Lemma $(\star)$} [Domination $\Rightarrow$ Uniform Integrability]\label{UI}
If there exists $\zeta\in\mathcal{L}^{1}$ such that $\lvert \mathsf{G}_{n}\lvert\leq \zeta~a.s.$ for all $n$, then the family $\{\mathsf{G}_{n}\}$ is uniformly integrable.\\
\noindent \textit{Proof:}
For any $\varepsilon>0$, since $\zeta\in\mathcal{L}^{1}$, there exists $K>0$ such that $\mathbb{E}\big[\zeta\mathds{1}_{\{\zeta>K\}}\big]<\varepsilon$. Then 
\begin{equation}\nonumber
	\sup_{n}\mathbb{E}\big[\lvert \mathsf{G}_{n}\lvert\mathds{1}_{\{\lvert \mathsf{G}_{n}\lvert>K\}}\big]\leq\sup_{n} \mathbb{E}\big[\zeta\mathds{1}_{\{\zeta>K\}}\big]\leq\varepsilon,
\end{equation}
which is exactly the definition of uniform integrability. 
\end{proof}
\end{appendix}

\end{document}